\definecolor{qqqqff}{rgb}{0.,0.,1.}
\definecolor{xdxdff}{rgb}{0.49019607843137253,0.49019607843137253,1.}
\definecolor{qqqqff}{rgb}{0.,0.,1.}
\theoremstyle{plain}
\newtheorem{thm}[equation]{Theorem}
\newtheorem{pro}[equation]{Proposition}
\newtheorem{cor}[equation]{Corollary}
\newtheorem{lem}[equation]{Lemma}
\theoremstyle{remark}
\newtheorem{rem}[equation]{\bf Remark}
\newtheorem{exa}[equation]{\bf Example}
\newtheorem{DEF}[equation]{\bf Definition}
\newtheorem{tab}[equation]{\hphantom{\quad\quad\quad\quad\quad\quad}Table}
\numberwithin{equation}{section}
\newcommand{\sub}{\subseteq}
\newcommand{\la}{Lie algebra }
\newcommand{\fm}{(\cdot,\cdot)}
\newcommand{\fh}{\mathfrak{h}}
\newcommand{\R}{\mathbb{R} }
\newcommand{\Z}{\mathbb{Z} }
\newcommand{\CA}{\mathcal{A} }
\newcommand{\CV}{\mathcal{V}}
\newcommand{\fg}{\mathfrak{g}}
\newcommand{\pa}{{\pi(\alpha)}}
\newcommand{\ep}{\hfill$\Box$}
\def\ad{\hbox{ad}}
\def\andd{\quad\hbox{and}\quad}
\def\sg{\sigma}
\def\a{\alpha}
\def\b{\beta}
\def\lam{\lambda}
\def\Lam{\Lambda}
\def\ep{\epsilon}
\def\andd{\quad\hbox{and}\quad}
\def\id{\hbox{id}}
\def\End{\hbox{End}}
\def\St{\mathfrak{St}}
\def\g{\mathfrak{g}}
\def\SL{\text{SL}_{2}}
\def\mod{\hbox{mod}}
\def\andd{\quad\hbox{and}\quad}
\def\ind{\hbox{ind}}
\def\v{{\mathcal V}}
\def\fm{(\cdot,\cdot)}
\def\a{\alpha}
\def\w{{\mathcal W}}
\def\sub{\subseteq}
\def\rd{\dot{R}}
\def\lam{\lambda}
\def\Lam{\Lambda}
\def\1k{\frac{1}{k}}
\def\la{\langle}
\def\ra{\rangle}
\def\rds{\dot{R}_{sh}}
\def\rdl{\dot{R}_{lg}}
\def\GL{GL}
\def\d{\delta}
\def\b{\beta}
\def\qed{\hfill$\Box$}
\def\sg{\sigma}
\def\sg{\sigma}
\def\quadd{\quad\quad}
\def\ad{\hbox{ad}}
\def\bbbc{{\mathbb C}}
\def\bbbz{{\mathbb Z}}
\def\Z{{\mathbb Z}}
\def\bbbr{{\mathbb R}}
\def\bbbf{{\mathbb F}}
\def\bbbk{{\mathbb K}}
\def\xx{\mathcal X}
\def\aa{\mathcal A}
\def\ep{\epsilon}
\def\ll{{\mathcal G }}
\def\ll{\mathcal L}
\def\bb{{\mathcal B}}
\def\proof{{\noindent\bf Proof. }}
\def\rds{\dot{R}_{sh}}
\def\rdl{\dot{R}_{lg}}
\def\rde{\dot{R}_{ex}}
\def\St{\mathfrak{St}}
\def\g{\mathfrak{g}}
\def\SL{\text{SL}_{2}}
\def\span{\hbox{span}}
\def\te{\theta}
\def\DynkinNodeSize{1.5mm}
\def\DynkinArrowLength{2mm}
\tikzset{
	% a diagram node
	dnode/.style={
		circle,
		inner sep=0pt,
		minimum size=\DynkinNodeSize,
		fill=white,
		draw},
	middlearrow/.style={
		decoration={markings,
			mark=at position 0.8 with
			%{\arrow[black]{angle 90};}
			%{\arrow[black]{angle 60};}
			%{\arrow[black]{stealth};}
			{\draw (0:0mm) -- +(+140:\DynkinArrowLength); \draw (0:0mm) -- +(-140:\DynkinArrowLength);},
		},
		postaction={decorate}
	},
	leftrightarrow/.style={
		decoration={markings,
			mark=at position 0.999 with
			{
				\draw (0:0mm) -- +(+135:\DynkinArrowLength); \draw (0:0mm) -- +(-135:\DynkinArrowLength);
			},
			mark=at position 0.001 with
			{
				\draw (0:0mm) -- +(+45:\DynkinArrowLength); \draw (0:0mm) -- +(-45:\DynkinArrowLength);
			},
		},
		postaction={decorate}
	},
	% single edge
	sedge/.style={
	},
	% directed double edge
	dedge/.style={
		middlearrow,
		double distance=0.6mm,
	},
	% directed triple edge
	tedge/.style={
		middlearrow,
		double distance=1.0mm+\pgflinewidth,
		postaction={draw}, % third line
	},
	% double edge with two arrows, for \tilde{A}_1 residues
	infedge/.style={
		leftrightarrow,
		double distance=0.5mm,
	},
}
\begin{document}

%%%%%%%%%%%%%%%%%%%%%%%%%%%%%%%%%%%%%%%%%%%%%%%%%%
% Put your own personal macros here, or \input a file of them.
%
%
%%%%%%%%%%%%%%%%%%%%%%%%%%%%%%%%%%%%%%%%%%%%%%%%%%
%%%For Title Page %%%%%%%%%%%%%%%%%%%%%%%%%%%%%%%%
%%%%%%%%%%%%%%%%%%%%%%%%%%%%%%%%%%%%%%%%%%%%%%%%%%
\title{Integral structures in extended affine Lie algebras}

\author{Saeid Azam, Amir Farahmand Parsa, Mehdi Izadi Farhadi}
\address
{Department of Mathematics\\ University of Isfahan\\Isfahan, Iran,
	P.O.Box: 81745-163 and\\
	School of Mathematics, Institute for
	Research in Fundamental Sciences (IPM), P.O. Box: 19395-5746.
} \email{azam@ipm.ir}
\address{Institute for
	Research in Fundamental Sciences (IPM), P.O. Box: 19395-5746.}\email{a.parsa@ipm.ir}
\address
{Department of Mathematics\\ University of Isfahan\\Isfahan, Iran,
	P.O.Box: 81745-163.} \email{m.izadi@sci.ui.ac.ir}
\thanks{This research was in part supported by
	a grant from IPM and carried out in
	IPM-Isfahan Branch.}
\keywords{{\em Extended affine, {affinization, multiloop algebra, Chevalley automorphism,} Chevalley basis, {Steinberg group.}}}

%%%%%%%%%%%%%%%%%%%%%%%%%%%%%%%%%%%%%%%%%%%%%%%%%%

\begin{abstract}
We {construct} certain integral structures for the cores of reduced tame extended affine Lie algebras of rank at least 2.\ {One of the main tools to achieve this is a generalization of Chevalley automorphisms in the context of extended affine Lie algebras.}\ As an application, groups of extended affine Lie type associated to the adjoint representation are defined over arbitrary fields.
\end{abstract}
 \subjclass[2010]{17B67, 17B65, 19C99, 20G44, 22E65}
\maketitle

\section{Introduction}
\label{intro}\setcounter{equation}{0}

K. Saito introduced {\it extended affine root systems} (EARS) in 1985 to model the universal deformation of a simple elliptic singularity (see \cite{Sai85}).\ {Then in 1990, H\o egh-Krohn and  Torresani constructed a certain class of {complex} Lie algebras called {\it quasi-simple Lie algebras} whose roots fall into the class of extended affine root systems} (see \cite{HoTor90}).\ {These algebras are now known as {\it extended affine Lie algebras} (EALAs) due to their root systems.}\ {EALAs} and their root systems were axiomatically defined and thoroughly studied in \cite{AABGP97}.\

{It is straightforward to define EALAs over fields of characteristic 0 by the same axioms {(see \cite{Ne04})}, however, for positive characteristics this requires more subtle considerations.}\ {Recall that the core of an EALA is its subalgebra generated by non-isotropic root spaces.}\ By analogy with the arguments in classical Lie theory, here we introduce certain integral forms {for the cores of EALAs}, by means of which, we {are able to define a counterpart for these algebras} over fields of positive characteristic through reducing structure constants and then extending scalars.\ 

In this paper, we consider the {cores} of reduced tame extended affine Lie algebras of rank at least 2. To construct our desired integral form for such an algebra, we present an appropriate {spanning set which} is obtained from a Chevalley system (Definition \ref{chevalley-sys}). {To achieve this, the procedure for isotropic and non-isotropic root spaces is different. For non-isotropic root spaces,}  we use arguments that are mostly local by means of nilpotent pairs and their corresponding rank $2$ {finite dimensional simple} local subalgebras, together with certain local affine subalgebras. Then {following the same approach  as in classical Lie theory, to make {our} arguments global in a compatible way, {we employ the concept of a Chevalley automorphism (Definition \ref{chev1})}}. 

{{For the isotropic} root spaces the situation is more subtle} due to the fact that our knowledge of these spaces is limited. We overcome this by considering the concept of a reflectable base (see Definition \ref{ref0}) and analyzing the action of inner automorphisms based on root vectors corresponding to the ambient reflectable base. We emphasize that our integral forms enjoy the very important property of being invariant under the action of the associated extended affine Weyl group which is, for instance, essential for constructing the corresponding groups of extended affine Lie type over arbitrary fields.

In \cite{AP19} we produced groups of extended affine Lie type over complex numbers associated with the tame reduced extended affine Lie algebras generalizing the results in \cite{Kry95} from simply-laced to arbitrary type. In this direction, there is also a work by Morita and Sakaguchi \cite{MS06}. Here, as an application of integral forms, we are able to define groups of extended affine Lie type over arbitrary fields. Then we show that when the underlying field is finite, such groups under {certain} mild conditions are finitely generated; {this generalizes the concept of arithmetic groups in non-Archimedean semisimple groups} (see, e.g., \cite{zbMATH05707462}).

Integral forms, in turn, are very important objects in {modular form theory.} Further, they appear in the modular moonshine program of Borcherds and Ryba {(see e.g., \cite{zbMATH00841167,zbMATH01425114,zbMATH00912132} and for more recent works, see \cite{zbMATH06377286,zbMATH06436297}).\ With these integral forms in hand, one might expect the emergence of} a modular form theory in the context of EALAs.\ Moreover, integral forms for EALAs can furnish the ground for defining integral cohomology of extended affine Lie algebras {(see e.g., \cite{zbMATH04091712,zbMATH05208125}).}

The structure of the paper is as follows.\ In Section~\ref{sub-sec-EALA} basic definitions and notation from the theory of EALAs are provided.\ Section~\ref{Chevalley automorphism and extended affinization} deals with the concept of extended affinization where we introduce certain Chevalley automorphisms and $\sg$-Chevalley pairs for extended affine Lie algebras in Definition \ref{chev1}.\ Then we show that a $\sg$-Chevalley pair for an EALA induces a Chevalley automorphism on its affinization (see Proposition \ref{pro115} below) and then {apply} it to construct explicit Chevalley automorphisms for toroidal Lie algebras (Corollary \ref{cor115-0}), affine Lie algebras (Corollary \ref{cor115}) and elliptic Lie algebras (Corollary \ref{cor115-2}), starting from a Chevalley automorphism of a finite dimensional simple Lie algebra {or an affine Lie algebra}.\ {Section~\ref{Preliminary results on integral structure} is an auxiliary section to Section~\ref{Integral structure of the core} where we provide some preliminary results 
%for integral forms in \red{EALAs} 
which are {higher nullity extensions} of properties of integral forms for finite and affine Lie algebras.} {In particular we define the notion of a Chevalley system (Definition \ref{chevalley-sys}) {for an extended affine Lie algebra.}} In Section~\ref{Integral structure of the core}, {the integral {structure} of the core of an EALA equipped with a Chevalley automorphism is studied and via {a Chevalley system and} a reflectable base, an integral form for the core is introduced} (see Theorems \ref{thmnew5} and \ref{thmnew1}).\ Section~\ref{On the Uniqueness of integral structure} is to show the independence of our integral forms from the choice of the {Chevalley system} (see Theorem \ref{simply}).\ In Section~\ref{chevalley basis and multi-loop realization}, multi-loop algebras based on extended affine Lie algebras and the algebra of Laurent polynomials are studied.\ It is shown in Theorem \ref{thm12} that with certain conditions on the involved automorphisms, the affinization procedure, when applied to {a} multi-loop algebra leads to an extended affine Lie algebra. We also investigate how a Chevalley automorphism on the underlying extended affine Lie algebra can be lifted to a Chevalley automorphism on the multi-loop affinization through Proposition \ref{lem12}.\ Finally in Section~\ref{sec:appl}, we present two rather immediate applications of integral forms.\ First these integral forms are used to define extended affine Lie algebras over fields of positive characteristic (see Definition \ref{defK-form}) then define certain groups associated with them.\ It is shown in Theorem \ref{thmgealtf} that the adjoint forms of groups of extended affine Lie type are always infinite and that over finite fields are finitely generated.

{Since our arguments for integral forms are essentially correspond to local rank $2$ subalgebras, we have excluded type $A_1$
which seems to require a different approach; we {will} address this type in a separate work.
We also emphasize that {since} our main motive for the present work is to define groups of extended affine Lie type over an arbitrary field, and on the group level EALAs and their cores have isomorphic associated groups} (see \cite{AP19}),  we have restricted our study in this work to the cores.
We conclude the introduction by emphasizing that
one might be able to extend the integral {forms} from the core to the whole EALA with techniques presented in \cite{Ne04}; this also will be addressed in {our} future works.

\section{Extended affine Lie algebras}\label{sub-sec-EALA}
\setcounter{equation}{0}
In this section we provide some basic concepts, definitions, and notations which will be used in the sequel. In particular we recall definitions of extended affine Lie algebras and root systems.
We also give some basic results on extended affine theory.  

All vector spaces and algebras are assumed to be over field of complex numbers $\bbbc$ unless
otherwise stated. By $\fh^\star$, we denote the dual spaces of a vector space $\fh$ and by $\la T\ra$, we denote the subgroup generated by a subset $T$ of the ground group or vector space.\

Let $\fg$  be a Lie algebra, $\fh$ be a non-trivial
subalgebra of $\fg$ and $\fm$ be a symmetric bilinear form on $\fg$. The triple $(\fg,\fm,\fh)$ is called an {\it extended affine Lie algebra} if the following five axioms hold:

(A1) $\fm$ is invariant and non-degenerate on $\fg$,

(A2) $\fh$ is a finite-dimensional Cartan subalgebra of $\fg$.

Axiom (A2) means that 
$$\fg=\sum_{\a\in\fh^\star}\fg_\a\hbox{ where }\fg_\a=\{x\in\fg\mid [h,x]=\a(h)x\hbox{ for all }h\in\fh\},
\hbox{ and }\fg_0=\fh.$$ 
Let $R$ be the set of roots of $\fg$, namely $R=\{\a\in\fh^\star\mid\fg_\a\neq\{0\}\}$.
It follows from (A1)-(A2) that the form $\fm$ restricted to $\fh$ is non-degenerate and so it can be transferred
to $\fh^\star$ by $(\a,\b):=(t_\a,t_\b)$ where $t_\a\in\fh$
is the unique element satisfying $\a(h)=(h,t_\a)$, $h\in\fh$.
%Let us denote by ${\mathcal R}$ the radical of the form on $\fh^\star$.
Let  $\v:=\span_{\bbbr}R$ and denote the radical of the form on $\v$ by $\v^0$. We set
$$R^0:=\{\a\in R\mid (\a,\a)=0\}\andd R^\times:=R\setminus R^0.$$
Then $R=R^0\uplus R^\times$ is regarded as the decomposition of roots into
{\it isotropic} and {\it non-isotropic} roots, respectively. 
%Let $\v:=\hbox{Span}_\bbbr R$
%and $\v^0:=\hbox{Span}_{\bbbr}R^0$.

(A3) For $\a\in R^\times$ and $x\in\fg_\a$, $\ad(x)$ acts locally nilpotently on $\fg$.

(A4) The $\bbbz$-span of $R$ in $\fh^\star$ is a free abelian group of rank $\dim\v$.

(A5) (a) $R^\times$ is indecomposable,

\quad\quad\;\;(b) $R^0$ is {\it non-isolated}, meaning that $R^0=(R^\times-R^\times)\cap\v^0.$

In lemma below,  we gather some basic facts about extended affine Lie algebras and root systems  which will be needed in the sequel.
For parts (i)-(iii) see \cite[Chapter I]{AABGP97} and for part (iv) see
\cite[Remark 1.5(ii)]{Az06}. We frequently use this lemma, often without further reference.
\begin{lem}\label{oldlem}
Let $(\fg,\fm,\fh)$ be and extended affine Lie algebra with root system
$R$. Let $\a\in R^\times$ and $\b\in R$.

(i) $\dim(\fg_\a)=1$.

(ii) $[x_\b,x_{-\b}]=(x_\b,x_{-\b}) t_\b,$ $ x_{\pm\b}\in\fg_{\pm\b}.$

(iii) $[\fg_\b,\fg_{-\b}]=\bbbc t_\b.$

(iv) $[\fg_\a,\fg_\b]\not=\{0\}$ if $\a+\b\in R$. In particular, if $\a+\b\in R^\times$,
$[\fg_\a,\fg_\b]=\fg_{\a+\b}$.
\end{lem}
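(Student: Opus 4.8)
The plan is to derive all four parts from the standard $\SL$-theory applied to a non-isotropic root $\a$, together with the axioms (A1)--(A5) and the invariance of the form. The key observation throughout is that for $\a\in R^\times$ the element $t_\a$ satisfies $(\a,\a)=(t_\a,t_\a)\neq 0$, so we may normalize and build an $\SL$-triple.

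First I would establish part (ii), since it is the most foundational. For $x_{\pm\b}\in\fg_{\pm\b}$ with $\b\in R$, invariance of the form gives, for every $h\in\fh$,
\[
(h,[x_\b,x_{-\b}]) = ([h,x_\b],x_{-\b}) = \b(h)(x_\b,x_{-\b}) = (h,t_\b)(x_\b,x_{-\b}),
\]
using $\b(h)=(h,t_\b)$. Since $[x_\b,x_{-\b}]\in\fg_0=\fh$ (as $[\fg_\b,\fg_{-\b}]\subseteq\fg_0$) and the form is non-degenerate on $\fh$, this forces $[x_\b,x_{-\b}]=(x_\b,x_{-\b})t_\b$, which is (ii). Part (iii) is then immediate: (ii) shows $[\fg_\b,\fg_{-\b}]\subseteq\bbbc t_\b$, and to get equality I would pick $x_\b\neq 0$ and use non-degeneracy of the form (A1), which pairs $\fg_\b$ non-trivially with $\fg_{-\b}$, to produce $x_{-\b}$ with $(x_\b,x_{-\b})\neq 0$.

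For part (i), I would invoke (A3): since $\a\in R^\times$, any $x\in\fg_\a$ acts locally nilpotently, and together with $(\a,\a)\neq 0$ one obtains a copy of $\SL$ acting on $\fg$ with semisimple element proportional to $t_\a$. The eigenvalues of $\ad t_\a$ on the root spaces $\fg_{k\a}$ are governed by $(k\a,\a)=k(\a,\a)$, and the standard finite-dimensional $\SL$-representation theory (highest-weight/string arguments), applied to the local module generated under this $\SL$-action, yields that the $\a$-string is short and that $\dim\fg_\a=1$; this is the classical argument and I would cite \cite[Chapter I]{AABGP97} for the precise statement in the extended affine setting rather than reproduce it.

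Finally, for part (iv), suppose $\a+\b\in R$. If $[\fg_\a,\fg_\b]=\{0\}$ then $x_\a$ would kill $\fg_\b$; combined with the $\SL$-action attached to $\a$ this contradicts the structure of the $\a$-string through $\b$ (the root string must be unbroken), so $[\fg_\a,\fg_\b]\neq\{0\}$. When moreover $\a+\b\in R^\times$, part (i) gives $\dim\fg_{\a+\b}=1$, and since $\{0\}\neq[\fg_\a,\fg_\b]\subseteq\fg_{\a+\b}$ we get equality $[\fg_\a,\fg_\b]=\fg_{\a+\b}$. I expect the main obstacle to be part (iv) in the isotropic-target case, where $\dim\fg_{\a+\b}$ need not be $1$ and the short unbroken-string argument must be invoked carefully; I would follow \cite[Remark 1.5(ii)]{Az06} for this point. \qed
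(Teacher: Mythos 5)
Your proposal is correct and is essentially the paper's approach: the paper gives no argument of its own for this lemma, simply citing \cite[Chapter I]{AABGP97} for (i)--(iii) and \cite[Remark 1.5(ii)]{Az06} for (iv), which are exactly the references you fall back on for the non-elementary steps. Your worked-out derivations of (ii) and (iii) from invariance and non-degeneracy of the form are the standard ones and are fine.
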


Let $(\fg,\fm,\fh)$ be an EALA with root system $R$.\
For $\a\in R^\times$, we set $\a^\vee:=2\a/(\a,\a)$.
The reflection $w_\a\in\GL(\fh^\star)$ is then defined by
$w_\a(\b):=\b-(\b,\a^\vee)\a$, $\b\in\fh^\star$.
Let $\w_\fg$ be the Weyl group of $\fg$; the subgroup of $\GL(\fh^\star)$ generated by reflections $w_\a$, $\a\in R^\times$.\ By \cite[Theorem I.2.16]{AABGP97}, the root system $R$ of $\fg$  is an irreducible extended affine root system in the following sense.

\begin{DEF}\label{def2}\rm{
		Let $\v$ be a finite-dimensional real vector space equipped with a non-trivial positive semidefinite symmetric bilinear form $\fm$, and $R$ a subset of $\v$. Then the {triple} $(\v,\fm,R)$, or $R$ if there is no confusion, is called an {\it extended affine root system} (EARS)
		{if the following axioms hold:}}
	{\begin{itemize}
			\item[(R1)] $\la R\ra$ is a full lattice in $\v$,
			\item[(R2)] $(\b,\a^\vee)\in\bbbz$, $\a,\b\in R^\times$,
			\item[(R3)] $ w_\a(\b)\in R$ for $\a\in R^\times$, $\b\in R$,			
			\item[(R4)] $R^0= \v^0\cap(R^\times - R^\times)$,
			\item [(R5)] $\a\in R^\times\Rightarrow 2\a\not\in R$.
		\end{itemize}
		%$R$ is called {\it reduced} if $\a\in R^\times$ implies $2\a\not\in R^\times$, and 
We say that $R$ is {\it irreducible} or
		{\it connected} if  $R^\times$ cannot be written as the union of two of its non-empty orthogonal subsets.}
\end{DEF}		
\begin{rem}\label{rem100}{\rm
		One checks that the definition of an irreducible EARS given above coincides with the definition of an extended affine root system  given 
		in \cite[Definition II.2.1]{AABGP97}.}
\end{rem}

We next recall from \cite[Chapter II]{AABGP97} some facts about the structure of an EARS.
Let $(\v,\fm,R)$ be an {irreducible} EARS.\ It follows that the canonical image $\bar R$ of $R$ in $\bar\v:=\v/\v^0$ is a finite root system
in $\bar\v$. The {\it type} and the {\it rank} of $R$ is defined to be the type and the rank of $\bar R$, {and the dimension of $\v^0$ is called the {\it nullity} of $R$.}
The EARS $R$ is called {\it reduced} if the finite root system $\bar R$ is reduced.

We fix a root base $\{\bar\a_1,\ldots,\bar\a_\ell\}$ for $\bar R$. Considering the canonical map $\CV\rightarrow\bar\CV$, and for each $i$, we fix a preimage ${\dot\a}_i$ of $\bar\a_i$ in $R$.
Let $\dot\CV:=\hbox{Span}_\R\{\dot\a_1,\ldots,\dot\a_\ell\}$.
Then $\CV=\dot\CV\oplus\CV^0$ and 
$\dot R:=\{\dot\a\in\dot\CV\mid\dot\a+\sg\in R\hbox{ for some }\sg\in \CV^0\}$ is a finite root system in $\dot\CV$ isomorphic to $\bar R$.  Moreover, one obtains a
description of $R$ in the form
\begin{equation}\label{eq99}
R=R(\rd,S,L,E):=(S+S)\cup (\rds+S)\cup (\rdl+L)\cup (\rde+E),
\end{equation}
where $\rds$, $\rdl$ and $\rde$ are the sets of short, long and extra long roots of $\rd$, respectively, and $S$, $L$ and $E$ are
certain subsets of $R^0$, called {\it (translated) semilattices}, which interact in a prescribed way (see \cite[Chapter II]{AABGP97} for details).
In particular $\la L\ra\sub S$ and $\la S\ra/\la L\ra$ is a vector space over field of two elements whose dimension is called the
{\it twist number} of $R$.    {If $R$ is reduced, then in (\ref{eq99}), $\rd_{ex}$ and $E$ are interpreted as empty sets, so
\begin{equation}\label{eq99-1}
R=R(\rd,S,L)=(S+S)\cup (\rds+S)\cup (\rdl+L).
\end{equation}
for some  reduced finite root system $\rd$}. 
Let $\Lam:=\la R^0\ra$, then $\Lam=\la S\ra$ and $\Lam$ is a lattice in $\v^0=span_{\bbbr}R^0$. Moreover,  $S$ contains a $\bbbz$-basis of $\Lam$, namely
\begin{equation}\label{sen1}
\Lam=\bbbz\sg_1\oplus\cdots\oplus\bbbz\sg_\nu,\hbox{ for some }\sg_1,\ldots,\sg_\nu\in S.
\end{equation} 
 It follows that  $\sg_1,\ldots,\sg_\nu$ can be chosen such that
\begin{equation}\label{ned1}
\begin{array}{l}
\la L\ra=k\Lam_1\oplus\Lam_2,\\
\Lam_1=k\bbbz\sg_1\oplus\cdots\oplus k\bbbz\sg_t,\\
\Lam_2=\bbbz\sg_{t+1}\oplus\cdots\oplus\bbbz\sg_\nu,
\end{array}
\end{equation}
where $t$ is the twist number of $R$, $k=3$ if $X=G_2$ and $k=2$ otherwise.
Finally, we recall that if $R$ is non-simply laced then $S=S+L$ and $L=kS+L$.

	For later use, we normalize the form such that
\begin{equation}\label{fm1}
(\a,\a)=2\quad\hbox{for}\quad\a\in\rds.
\end{equation}

For $\a\in R^\times$, define the reflection $w_\a\in\GL(\v)$ by $w_\a(\b)=\b-(\b,\a^\vee)\a$. Then $\w_R:=\langle w_\a\mid\a\in R^\times\rangle\sub\hbox{GL}(\CV)$ is called the {\it Weyl group} of $R$. We note that the map $\bar{\;}$ induces an epimorphism $\w_R\longrightarrow \w_{\bar R}$, where $\w_{\bar R}$ is the Weyl group of $\bar R$.
%We note that the 
%In turns out that the Weyl group $\dot\CW$ of $\dot R$ can be imbedded in $\CW$. 

The action of Weyl group on the root system results in some important objects known as ``reflectable bases''. 
\begin{DEF}\label{ref0}
A subset $\Pi$ of $R^\times$ is called a {\it reflectable base} for $R$ if it is minimal with respect to the property that each non-isotropic root can be recovered by reflecting roots of $\Pi$ with respect to hyperplanes determined by elements of $\Pi$. In other words, 
if $\w_\Pi$ is the subgroup of $\w$ generated by reflections based on $\Pi$, then $\w_\Pi\Pi=R^\times$ and no proper subset of $\Pi$ has this property. 
\end{DEF}
Reflectable bases resemble in  a weaker sense the concept of root bases for finite and affine root systems. For a characterization of reflectable bases see \cite{AYY12}, \cite{ASTY19}. In Section \ref{Integral structure of the core},  we introduce for each type a particular reflectable base (see Table \ref{tab11}) which plays a significant role in our understanding of the integral structure of extended affine Lie algebras.

The following lemma discusses the choice of the finite root system $\dot R$ in
%and the sets $S,L,E$ in
 (\ref{eq99}). In fact, one notes that  $\dot R$ depends on the choice of the pre-images $\dot\a_i$ of $\bar\a_i$ for $1\leq i\leq \ell$. The following result shows that $\dot R$ can be chosen such that a prescribed property is satisfied. 
\begin{lem}\label{isos}
For a fixed $\sg\in R^0$, the finite root system $\dot R$ in (\ref{eq99}) can be chosen in such a way that $\dot\a+\sg\in R$ for some $\dot\a\in\dot R$. In particular, with this choice of ${\dot R}$,  we have $\sg\in S$.
\end{lem}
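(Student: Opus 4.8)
The plan is to reduce everything to a single Weyl--group transport, using that $\w_R$ fixes the radical $\CV^0$ pointwise. First I would invoke axiom (R4), $R^0=\CV^0\cap(R^\times-R^\times)$, to write the given isotropic vector as $\sigma=\mu'-\mu$ with $\mu,\mu'\in R^\times$. Then $\mu\in R^\times$ and $\mu+\sigma=\mu'\in R^\times$, and since $\sigma\in\CV^0$ the two roots have a common image $\bar\gamma:=\bar\mu=\bar{\mu'}\in\bar R$. Thus I already possess a non-isotropic root $\mu$ whose $\sigma$--translate is again a root; the only remaining task is to arrange that a suitable Weyl translate of $\mu$ may serve as one of the preimages $\dot\a_i$ defining $\dot R$.

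Next I would push the common image $\bar\gamma$ onto the fixed base. As $\bar R$ is a finite reduced root system, every root is $\w_{\bar R}$--conjugate to a simple root, so there is $w\in\w_{\bar R}$ with $w\bar\gamma=\bar\a_j$ for some $j$. Lifting $w$ to $\tilde w\in\w_R$ through the epimorphism $\w_R\to\w_{\bar R}$, the decisive observation is that each reflection $w_\a$ acts on $\sigma\in\CV^0$ by $w_\a(\sigma)=\sigma-(\sigma,\a^\vee)\a=\sigma$, because $(\sigma,\a^\vee)=0$ on the radical; hence $\tilde w$ fixes $\CV^0$ pointwise, and in particular $\tilde w\sigma=\sigma$. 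Setting $\mu^\ast:=\tilde w\mu$, I obtain $\mu^\ast\in R^\times$ with image $\bar\a_j$ and $\mu^\ast+\sigma=\tilde w(\mu+\sigma)\in R$; that is, $\mu^\ast$ is a bona fide preimage in $R$ of the simple root $\bar\a_j$.

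To finish the first assertion I would choose the defining preimages so that $\dot\a_j:=\mu^\ast$, taking arbitrary preimages in $R$ for the remaining simple roots; this fixes $\dot\CV$ and hence $\dot R$, and then $\dot\a:=\dot\a_j\in\dot R$ satisfies $\dot\a+\sigma\in R$. For the ``in particular'' clause I would compare $\dot\a_j+\sigma\in R$ with the reduced decomposition (\ref{eq99-1}) relative to $\CV=\dot\CV\oplus\CV^0$: its $\dot\CV$--component is $\dot\a_j$ and its $\CV^0$--component is $\sigma$, so $\sigma\in S$ if $\dot\a_j\in\rds$, while $\sigma\in L\sub\la L\ra\sub S$ if $\dot\a_j\in\rdl$; in either case $\sigma\in S$. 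The single genuinely non-formal ingredient is this transport step: one must use that the Weyl group leaves $\CV^0$ pointwise fixed, so that $\sigma$ survives unchanged while $\bar\gamma$ is carried to a simple root of the chosen base. Granting this, the rest is bookkeeping with the decomposition (\ref{eq99-1}).
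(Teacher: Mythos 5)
Your proposal is correct and follows essentially the same route as the paper: invoke (R4) to produce $\mu\in R^\times$ with $\mu+\sg\in R$, transport by a Weyl element lifting $w\in\w_{\bar R}$ (using that reflections fix $\v^0$ pointwise, which the paper uses implicitly in the computation $w(\gamma-\sg)+\sg=w(\gamma)$), and declare the transported root to be the chosen preimage $\dot\a_{i_0}$. Your extra case analysis of short versus long roots for the final claim $\sg\in S$ is a harmless elaboration of what the paper asserts directly.
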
 

\proof
Let $\sg\in R^0$. By (R4), there exists $\a\in R^\times$ such that $\a+\sg\in R$. Let $\gamma=\a+\sg$. Then
$\bar\a=\overline{\gamma-\sg}\in\bar R$. So there exists $w\in\w$, such that $\bar{w}(\bar\a)=\bar{\a}_{i_0}$ for some $1\leq i_0\leq \ell$.
Set $\dot{\a}_{i_0}:={w}(\a)\in R$. Then
${\dot \a}_{i_0}+\sg=w(\gamma-\sg)+\sg=w(\gamma)\in R$. 
So in the procedure of choosing pre-images for root bases $\bar\a_j$, described prior to (\ref{eq99-1}), we may choose $\dot\a_{i_0}=w(\a)$ as the fixed pre-image of $\bar{\a}_{i_0}$.  Then as $\dot{\a}_{i_0}\in\dot R$ and $\dot\a_{i_0}+\sg\in R$, we have  $ \sg\in S$.\qed

We conclude this section with a discussion on the core and centerless core of an extended affine Lie algebra $(\fg,\fm,\fh)$. 
The {\it core} of an EALA $\fg$ is by definition, the subalgebra $\fg_c$ of $\fg$ generated by non-isotropic root spaces.\
It follows that $\fg_c$ is a perfect ideal of $\fg$.
The EALA $\fg$ is called {\it tame} if $\fg_c^\perp:=\{x\in\fg\mid (x,\fg_c)=\{0\}\}={\mathcal Z}(\fg_c)$.

Since $\fg_c$ is an ideal of $\fg$, it is an $\fh$-module and so
$\fg_c=\sum_{\a\in R}\fg_c\cap\fg_\a$, where $\fg_c\cap\fg_\a=\fg_\a$ if $\a\in R^\times.$ If $\sg\in R^0$, then
by axiom (A5), there exists $\a\in R^\times$ with $\a+\sg\in R^\times$.
Then by Lemma \ref{oldlem}, $0\neq[\fg_{-\a},\fg_{\a+\sg}]\sub\fg_\sg$.
Thus $\fg_c\cap\fg_\a\neq 0$ for all $\a\in R$.
Considering this, and by abuse of language, we call $R$ the root system of $\fg_c$.

Assume further that $\fg$ is tame and set
%extended affine Lie algebra, that is $\{x\in\fg\mid [x, \fg_c]=0\}\sub\fg_c$. 
 $\fg_{cc}:=\fg_c/{\mathcal Z}(\fg_c)$ where   ${\mathcal Z}(\fg_c)$ denotes the center of $\fg_c$. The algebra $\fg_{cc}$ is called  the {\it centerless core} of $\fg$. Since ${\mathcal Z}(\fg_c)=\fg_c^\perp$, the form on $\fg$ induces a form on
$\fg_{cc}$ which is non-degenerate. Moreover, we have
\begin{equation}\label{core1}
\fg_{cc}=\sum_{\a\in R}(\fg_{cc})_\a
\quad\hbox{where}\quad
(\fg_{cc})_\a=\frac{\fg_c\cap\fg_\a+{\mathcal Z}(\fg_c)}{{\mathcal Z}(\fg_c)}.
\end{equation} 

\begin{lem}\label{zcore}
(i) Suppose $x_{\a+\sg}\in\fg_{\a+\sg}$, $x_{-\a}\in\fg_{-\a}$,
 $\a\in R^\times$ and $\sg\in R^0$. If $x=[x_{\a+\sg},x_{-\a}]\not=0$, then $x\not\in{\mathcal Z}(\fg_c)$.

(ii) For $\a\in R^\times$ and $\sg\in R^0$
For each $\a\in R$, $(\fg_{cc})_\a\not=0$.
\end{lem}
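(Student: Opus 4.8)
The plan is to establish (i) directly and then deduce (ii) from it. For (i), I would first observe that since $x=[x_{\a+\sg},x_{-\a}]\neq 0$ we must have $x_{\a+\sg}\neq 0$ and $x_{-\a}\neq 0$, so $\a+\sg\in R$; because $\sg\in\v^0$ gives $(\a+\sg,\a+\sg)=(\a,\a)\neq 0$, in fact $\a+\sg\in R^\times$. Hence $x\in\fg_\sg$ is a bracket of elements of the non-isotropic root spaces $\fg_{\a+\sg}$ and $\fg_{-\a}$, so $x\in\fg_c$. Rather than routing through the tameness identity $\mathcal{Z}(\fg_c)=\fg_c^\perp$, I would prove non-centrality directly by exhibiting a single $w\in\fg_c$ with $[x,w]\neq 0$.

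I would take $w=x_{-\a-\sg}\in\fg_{-\a-\sg}\subseteq\fg_c$ chosen so that $(x_{\a+\sg},x_{-\a-\sg})\neq 0$; this is possible because $\a+\sg\in R^\times$ forces $\fg_{\pm(\a+\sg)}$ to be lines paired non-degenerately by the form, and $x_{\a+\sg}\neq 0$. Expanding $[x,w]=[[x_{\a+\sg},x_{-\a}],x_{-\a-\sg}]$ by the Jacobi identity produces two terms. The term $[x_{\a+\sg},[x_{-\a},x_{-\a-\sg}]]$ vanishes, since $[x_{-\a},x_{-\a-\sg}]\in\fg_{-2\a-\sg}$ and $-2\a-\sg\notin R$ (its image in $\bar R$ is $-2\bar\a$, which is excluded because $R$ is reduced). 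Applying Lemma \ref{oldlem}(ii) to the surviving term, $[x_{\a+\sg},x_{-\a-\sg}]=(x_{\a+\sg},x_{-\a-\sg})\,t_{\a+\sg}$, and using $[t_{\a+\sg},x_{-\a}]=(-\a,\a+\sg)x_{-\a}=-(\a,\a)x_{-\a}$, one obtains $[x,w]=-(\a,\a)(x_{\a+\sg},x_{-\a-\sg})\,x_{-\a}\neq 0$. Thus $x$ does not centralize $\fg_c$, so $x\notin\mathcal{Z}(\fg_c)$. I expect the only delicate point to be the vanishing of the cross term, i.e.\ the fact that $-2\a-\sg$ is not a root; this is precisely where reducedness (equivalently axiom (R5) applied to the nonzero image $\bar\a$) enters.

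For (ii) I would split according to whether the root is non-isotropic or isotropic. If $\a\in R^\times$, then $\fg_c\cap\fg_\a=\fg_\a$, and choosing $x_{\pm\a}\in\fg_{\pm\a}$ with $(x_\a,x_{-\a})\neq 0$ gives $[x_\a,x_{-\a}]=(x_\a,x_{-\a})t_\a\neq 0$ by Lemma \ref{oldlem}(ii), where $t_\a\neq 0$ since $(\a,\a)\neq 0$; hence $x_\a\notin\mathcal{Z}(\fg_c)$ and $(\fg_{cc})_\a\neq 0$. If $\sg\in R^0$, then by axiom (A5) there is $\a\in R^\times$ with $\a+\sg\in R^\times$, and by Lemma \ref{oldlem}(iv) one may choose $x_{\a+\sg}, x_{-\a}$ so that $0\neq x:=[x_{\a+\sg},x_{-\a}]\in\fg_c\cap\fg_\sg$. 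Part (i) then yields $x\notin\mathcal{Z}(\fg_c)$, whence $\fg_c\cap\fg_\sg\not\subseteq\mathcal{Z}(\fg_c)$ and $(\fg_{cc})_\sg\neq 0$. This reduces (ii) entirely to (i), and the combined statement follows.
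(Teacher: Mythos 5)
Your argument is correct and, for part (i), takes a genuinely different route from the paper's. The paper pairs $x$ against $y=[x_{-\a-\sg},x_{\a}]\in\fg_c\cap\fg_{-\sg}$ under the invariant form and computes $(x,y)=-(\a,\a)\neq 0$, concluding $x\notin\fg_c^\perp={\mathcal Z}(\fg_c)$; this is where tameness (or at least the inclusion ${\mathcal Z}(\fg_c)\subseteq\fg_c^\perp$, which follows from perfectness of $\fg_c$) enters. You instead produce a direct witness to non-centrality, $[x,x_{-\a-\sg}]=-(\a,\a)(x_{\a+\sg},x_{-\a-\sg})\,x_{-\a}\neq 0$, so you need neither tameness nor the orthogonality description of the center; the two computations are essentially dual to one another via invariance of the form, both resting on $[x_{\a+\sg},x_{-\a-\sg}]\in\bbbc^{\times}t_{\a+\sg}$ and $(\a,\a)\neq 0$. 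In part (ii) you also replace the paper's citation of \cite[Proposition 1.4(ii)]{Az06} for the non-isotropic case by the one-line computation $[x_\a,x_{-\a}]=(x_\a,x_{-\a})t_\a\neq 0$, which is a harmless simplification; the isotropic case is handled exactly as in the paper. One caveat: you invoke reducedness to kill the cross term $[x_{-\a},x_{-\a-\sg}]\in\fg_{-2\a-\sg}$, whereas the lemma is stated in Section 2 where reducedness is not a standing hypothesis. This is not a defect of your proof relative to the paper's, since the paper's own chain of equalities silently discards the same Jacobi term $[[x_{-\a},x_{-\a-\sg}],x_\a]$ and therefore also tacitly needs $2\a+\sg\notin R$; in a non-reduced (type $BC$) situation both arguments would require an extra step. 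You are at least explicit about where this hypothesis is used.
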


\proof (i) By Lemma \ref{oldlem}(iv), there exist $x_{-\a-\sg}\in\fg_{-\a-\sg}$ and $x_{\a}\in\fg_{\a}$
 % and $x_{\a-\sg}\in\fg_{\a-\sg}$ 
 such that
  %$0\neq x:=[x_{\a+\sg},x_{-\a}]\in\fg_c\cap\fg_{\sg}$ and
 $0\neq y:=[x_{-\a-\sg},x_{\a}]\in\fg_c\cap\fg_{-\sg}.$
 By Lemma \ref{oldlem}(iii), we may assume that  $[x_\a,x_{-\a}]=t_\a$ and
 $[x_{\a+\sg},x_{-\a-\sg}]=t_{\a+\sg}.$
 Then
 \begin{eqnarray*}
 (x,y)&=&([x_{\a+\sg},x_{-\a}],[x_{-\a-\sg},x_{\a}])\\
 &=&([x_{\a+\sg},[x_{-\a},[x_{-\a-\sg},x_{\a}]])\\
&=&-(x_{\a+\sg},[x_{-\a-\sg},[x_\a,x_{-\a}]])\\
&=&-(x_{\a+\sg},[x_{-\a-\sg},t_\a])\\
&=&-([x_{\a+\sg},x_{-\a-\sg}],t_\a)\\
&=&-(t_{\a+\sg},,t_\a)=-(\a,\a)\not=0.
% 
% -[x_{-\a},[[x_{\a+\sg},x_{-\a}],x_{\a-\sg}]]
%-[x_{\a-\sg},[x_{-\a},[x_{\a+\sg},x_{-\a}]]\\
%&=&kt_\a-k't_{-\a+\sg},
 \end{eqnarray*}
 This shows that $x\not\in\fg_c^\perp={\mathcal Z}(\fg_c)$.
 Thus $x\in (\fg_c\cap\fg_\sg)\setminus {\mathcal Z}(\fg_c)$.

(ii) We must show that for each $\a\in R$, $\fg_c\cap\fg_\a\not\sub
{\mathcal Z}(\fg_c)$. If $\a\in R^\times$, then by  \cite[Proposition 1.4(ii)]{Az06}, ${\mathcal Z}(\fg_c)\cap\fg_\a=0$ and we are done.
 Assume next that $\sg\in R^0$. From axiom (A5), there exists $\a\in R^\times$ with $\pm(\a+\sg)\in R$. 
 Now by Lemma \ref{oldlem}(iv), there exists $x_{\a+\sg}\in\fg_{\\a+\sg}$ and  $x_{-\a}\in\fg_{\pm\a}$
 % and $x_{\a-\sg}\in\fg_{\a-\sg}$ 
 such that
  $0\neq x:=[x_{\a+\sg},x_{-\a}]\in\fg_c\cap\fg_{\sg}$.
  Then the required claim follows from part (i).\qed

Considering (\ref{core1}) and Lemma \ref{zcore}, we say, by abuse of language, that $R$ is the root system of $\fg_{cc}$. In other words, when we say $R$ is the root system of $\fg_c$ or $\fg_{cc}$, we actually mean that $R$ is the set of weights of $\fg_c$ or $\fg_{cc}$ as an $\fh$-module.

\section{Chevalley automorphism and extended affinization}\label{Chevalley automorphism and extended affinization}\setcounter{equation}{0}
In this section, we first introduce the concept of a loop algebra of an extended affine Lie algebra $\fg$ equipped with a finite order automorphism $\sg$, based on a quantum torus. We use this to construct a new extended affine Lie algebra out of $\fg$, called the extended affinization of $\fg$.  We introduce the concepts of Chevalley automorphisms and $\sg$-Chevalley pairs for extended affine Lie algebras and show that a $\sg$-Chevalley pair for $\fg$ induces a Chevalley automorphism on its affinization. This will be used then to construct explicit Chevalley automorphisms for toroidal Lie algebras, affine Lie algebras and elliptic Lie algebras, starting from a Chevalley {automorphism} of a finite dimensional simple Lie algebra.

Throughout this section {we assume that  $\CA$ is a quantum torus based on  a  free abelian group $\Lam$ of finite rank.} More precisely, {$\aa$ has a $\bbbc$-basis $\{a^\lam\mid\lam\in\Lam\}$} satisfying 
\begin{equation}\label{coordinate}
\aa=\sum_{\lam\in\Lam}^\oplus\aa^\lam\quad\hbox{with}\quad\aa^\lam=\bbbc a^\lam\andd 
a^\lam\cdot a^\gamma=\mu(\lam,\gamma)a^{\lam+\gamma}
\end{equation}
where $\mu:\Lam\times\Lam\rightarrow \bbbc^\times$ is a $2$-cocycle satisfying
$$\mu(\lam,\tau)=\mu(\tau,\lam)=\mu(-\lam,-\tau)\andd
\mu(0,0)=1.$$
We consider the $\Lambda$-graded  invariant non-degenerate   {symmetric}
bilinear form  $\ep:\aa\times\aa\rightarrow\bbbc,$ given by
$$\ep(a^\lam,a^\tau)=\mu(\lam,\tau)\d_{\lam,-\tau}\qquad(\lam,\tau\in\Lam).
$$
(Here $\d_{\sg,-\tau}$ denotes the Kronecker Delta.)

Let $(\fg,\fm, \fh)$ be an extended affine Lie algebra with root system $R$.
% namely $\fg=\sum_{\a\in R}\fg_\a$.  
Let $\sg$ be an automorphism of $\fg$ of period $m$, and $\omega:=e^{2\pi i/m}.$
Then $\sg$ induces a $\bbbz_m$-grading
$\fg=\sum_{\bar i\in\Z_m}\fg^{\bar i}$
% and $\fh=\sum_{\bar i\in\Z_m}\fh^{\bar i}$, 
where
$$\fg^{\bar i}=\{x\in\fg\mid \sg(x)=\omega^ix\}.
%\fh^{\bar i}=\{x\in\fh\mid \sg(x)=\omega^ix\}.
$$
We note that $\fg^{\bar j}=\pi_j(\fg)$, where
\begin{equation}\label{eqram}
\pi_j=\frac{1}{m}\sum_{i=0}^{m-1}\omega^{-ij}\sg^i.
\end{equation}
(The scalar $1/m$ is inserted for some normalization on the level of roots.) 
Moreover,
\begin{equation}\label{eq11}
\sg\pi_j=\pi_j\sg=\omega^{-j}\pi_j\andd [\pi_{i}(x),\pi_j(y)]=\pi_{i+j}[x,\pi_j(y)],
\end{equation}
for $x,y\in\fg$ (see \cite[Section 3]{AHY13}).  We set $\pi:=\pi_0$.
Note that
$\pi(\fg)=\fg^{\bar 0}=\fg^\sg$, where $\fg^\sg$ is the set of fixed points of $\fg$, under $\sg$. 

{Assume further that $\sg(\fh)=\fh$. Then we also have
{$\pi(\fh)=\fh^0=\fh^\sg$. 
The automorphism $\sg$ also induces an automorphism, denoted again by $\sg$, on $\fh^*$ by
$\sg(\a)(h)=\a(\sg^{-1}(h))$, $\a\in \fh^\star$, $h\in\fh$. As above
we denote the map $\pi_0\in\End(\fh^\star)$ by $\pi$. One observes that for $\a\in\fh^\star$, $\pi(\a)$ can be identified with $\a_{|_{\fh^\sg}}$. 
It is easy to check that
\begin{equation}\label{eqram3}
(\pi(\a),\pi(\b))=(\a,\pi(\b))\quad\hbox{for}\quad \a,\b\in\fh^\star.
\end{equation}
We now set
$$\pi(R):=\{\pi(\a)\mid \a\in R\}.$$}
Recall from Section \ref{sub-sec-EALA} that the form on $\fh$ transfers to a form on $\fh^\star$.}

Throughout this section, we assume that $\sg$ satisfies
\begin{equation}\label{eq1}
\begin{array}{l}
\sg^m=\id,\\
 \sg(\fh)=\fh,\\
  (\sg(x), \sg(y))=(x, y)\hbox{ for all }x, y\in\fg,\\
 C_{\fg^\sg}(\fh^\sg)\sub\fh^\sg,\\
 (\a,\pi(\a))\not=0,\hbox{ for some }\a\in R. 
\end{array}
\end{equation}
While the first four conditions given above looks natural for people familiar with the passage from finite dimensional theory to affine theory, the last less familiar condition is inserted in order to guarantee the existence of some non-isotropic vector $\pi(\a)$.   
%\noindent(Here $\fg^\sg$ and $\fh^\sg$ are fixed points of $\fg$ and $\fh$ under $\sg$, respectively.) Let $\omega=e^{2\pi i/m}$.

Next, we  consider the Lie algebra $\fg \otimes \CA$ with bracket defined by
\[ [x \otimes a, y \otimes b ] := [x,y] \otimes {\mu(\lam,\tau)}ab\]
for $x,y \in \fg$, $ a\in\aa^\lam,\;b\in\aa^\tau$.
Now we define a form on $\fg\otimes \CA$ {by linear extension} of
\begin{equation}\label{newtempeq8}
(x \otimes a, y \otimes b ) = (x,y) \epsilon(a,b),
\end{equation}
for  $x,y \in \fg$ and $ a,b \in \CA$.  It is easy to check that this is  a $\Lam$-graded invariant symmetric  bilinear form on ${\fg} \otimes \CA$.

% The following is a slight generalization of  \cite[Definition 3.1.1]{abfpgs}.
\begin{DEF}\label{loopalgebra}
	Let $\rho:\Lam\rightarrow\Z_m$ be a group epimorphism and set $\bar\lam:=\rho(\lam)$ for $\lam\in\Lam$.  The subalgebra
	\[ \tilde\fg := L_\rho(\fg, \CA):= \bigoplus_{\lambda \in \Lambda} (\fg^{\bar{\lambda}} \otimes \CA^\lambda)\]
	of $\fg \otimes \CA$  is called the {\it loop algebra} of $\fg$
	relative to $\rho$ and $\CA$. In the case that $\sg=\id$ and $\rho=0,$ we denote
	$L_\rho(\fg,\CA)$ by
	$L(\fg,\CA)$ and  note that  $L(\fg,\CA)=\fg\otimes\CA$. {The Lie algebra $L(\fg,\CA)$ is refereed to as a \it{toroidal} Lie algebra.}
\end{DEF}
From definition, it is clear that $\tilde{\fg}$  is a
$\Lambda$-graded Lie algebra with homogeneous spaces
$\tilde{\fg}^\lambda:=\fg^{\bar\lambda}\otimes\CA^\lambda$,
$\lambda\in\Lambda$.

The following result is standard, see for example \cite[Lemma 7.5]{AHY13}.

\begin{lem}\label{tildefgform}
	The form on $\fg \otimes \CA$ restricted to $\tilde{\fg}$ is a
	$\Lam$-graded invariant non-degenerate symmetric  bilinear form.
\end{lem}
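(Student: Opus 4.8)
The plan is to dispose of symmetry, invariance and $\Lam$-gradedness quickly, since these descend from the ambient form on $\fg\otimes\CA$, and then to concentrate the real work on non-degeneracy. For the first three properties, recall that the form defined by \eqref{newtempeq8} is already symmetric, invariant and $\Lam$-graded on $\fg\otimes\CA$. Because $\tilde\fg$ is a subalgebra of $\fg\otimes\CA$ (Definition \ref{loopalgebra}), the restriction stays symmetric, and invariance $([u,v],w)=(u,[v,w])$ survives for $u,v,w\in\tilde\fg$ since $[u,v],[v,w]\in\tilde\fg$. For gradedness, write $\tilde\fg^\lam=\fg^{\bar\lam}\otimes\CA^\lam$; since $\ep(a^\lam,a^\tau)=\mu(\lam,\tau)\d_{\lam,-\tau}$ vanishes unless $\tau=-\lam$, the definition \eqref{newtempeq8} gives $(\tilde\fg^\lam,\tilde\fg^\mu)=0$ whenever $\lam+\mu\neq 0$.

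The substantive step is non-degeneracy, and first I would establish the auxiliary fact that the $\sg$-eigenspaces $\fg^{\bar i}$ and $\fg^{-\bar i}$ are paired non-degenerately by the form on $\fg$. Using the $\sg$-invariance of the form (the third condition in \eqref{eq1}), for $x\in\fg^{\bar i}$ and $y\in\fg^{\bar j}$ one computes $(x,y)=(\sg(x),\sg(y))=\omega^{i+j}(x,y)$, so $(x,y)=0$ unless $i+j\equiv 0\pmod m$, i.e.\ unless $\bar j=-\bar i$. Now given $0\neq x\in\fg^{\bar i}$, non-degeneracy of the form on $\fg$ (axiom (A1)) supplies $y\in\fg$ with $(x,y)\neq 0$; decomposing $y=\sum_{\bar k}y_{\bar k}$ into homogeneous components and using the vanishing just noted, all terms except $y_{-\bar i}$ pair trivially with $x$, whence $(x,y_{-\bar i})\neq 0$ with $y_{-\bar i}\in\fg^{-\bar i}$.

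With this pairing in hand, non-degeneracy on $\tilde\fg$ follows by bookkeeping. Let $0\neq z\in\tilde\fg$ and write $z=\sum_\lam z_\lam$ with $z_\lam\in\tilde\fg^\lam$; fix $\lam_0$ with $z_{\lam_0}\neq 0$ and write $z_{\lam_0}=x\otimes a^{\lam_0}$ for some $0\neq x\in\fg^{\overline{\lam_0}}$, using $\CA^{\lam_0}=\bbbc a^{\lam_0}$. Since $\rho$ is a homomorphism, $\fg^{-\overline{\lam_0}}=\fg^{\overline{-\lam_0}}$, so the auxiliary fact yields $y\in\fg^{\overline{-\lam_0}}$ with $(x,y)\neq 0$. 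Set $w=y\otimes a^{-\lam_0}\in\fg^{\overline{-\lam_0}}\otimes\CA^{-\lam_0}=\tilde\fg^{-\lam_0}\sub\tilde\fg$. By the $\Lam$-gradedness established above, $(z_\lam,w)=0$ for every $\lam\neq\lam_0$, so
$$(z,w)=(z_{\lam_0},w)=(x,y)\,\ep(a^{\lam_0},a^{-\lam_0})=(x,y)\,\mu(\lam_0,-\lam_0)\neq 0,$$
the last inequality holding because $\mu$ takes values in $\bbbc^\times$.

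I expect the main obstacle to be the non-degeneracy step, and within it the crux is the observation that $\sg$-invariance forces $\fg^{\bar i}$ to pair non-degenerately with $\fg^{-\bar i}$; once this eigenspace pairing is in place, the remainder is a straightforward combination of the two gradings (the $\Z_m$-grading of $\fg$ and the $\Lam$-grading of $\CA$) together with the fact that $\mu$ never vanishes.
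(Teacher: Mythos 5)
Your proof is correct. The paper itself gives no argument for this lemma --- it simply declares it standard and cites \cite[Lemma 7.5]{AHY13} --- and your write-up is essentially the proof found in that reference: symmetry, invariance and $\Lam$-gradedness descend from the ambient form on $\fg\otimes\CA$, and non-degeneracy reduces, via the one-dimensionality of $\CA^\lam$ and the identity $\overline{-\lam}=-\overline{\lam}$, to the observation that $\sg$-invariance of the form forces $\fg^{\bar i}$ to pair non-degenerately with $\fg^{-\bar i}$. You have correctly identified that eigenspace-pairing step as the only point where anything needs to be checked, so nothing is missing.
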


For $\a\in\fh^\star$, let $\fg_{\pi(\a)}$ be the sum of all root spaces $\fg_\b$ with $\pi(\b)=\pi(\a)$. For $\lam\in\Lam$, let $\fg^{\bar\lam}_{\pi(\a)}=\fg_{\pi(\lam)}\cap\fg^{\bar\lam}$. Then we have
\begin{equation}\label{gg}
\tilde{\fg}^\lambda = \fg^{\bar{\lambda}} \otimes \CA^\lambda = \bigoplus_{\pi(\alpha) \in \pi(R)}
( \fg^{\bar \lambda}_{\pi(\alpha)} \otimes \CA^\lambda ).
\end{equation}

Next, we set $\tilde\fh:=\fh^\sg\otimes 1$, and we embed $(\fh^\sg)^\star$ in $\tilde\fh^\star$.
The adjoint action of $\tilde{\fh}$ on $\tilde{\fg}$ stabilizes the spaces $\fg^{\bar{\lambda}} \otimes \CA^\lambda$, $\lam\in\Lam$. Let
$\tilde\fg_{\pi(\a)}=\{x\in\tilde\fg\mid [x,h]=\pi(\a)(h)x\hbox{ for all }h\in\tilde\fh\}$. It then follows that
\begin{equation}\label{gtilderootdec}
\tilde\fg = \bigoplus_{\pi(\alpha) \in \pi(R)} \tilde\fg_{\pi(\alpha)}.
%=
% \bigoplus_{\pi(\alpha) \in \pi(R),\;\lam\in\Lam} \tilde\fg_{\pi(\alpha)}^{\bar\lam}
\end{equation}
Moreover, we have
\begin{equation}\label{tildefgpa}
{\tilde\fg}_{\pi(\alpha)} = \bigoplus_{\lambda \in \Lambda} ({\fg}^{\bar{\lambda}}_{\pi(\alpha)}\otimes\CA^\lam),
% \andd\
%{\tilde\fg}^{\bar\lam}_{\pi(\a)}=\sum_{\{\gamma\in\Lam\mid \bar\gamma=\bar\lam\}}\fg^{\bar\lam}_{\pi(\a)}\otimes\CA^\gamma.
\end{equation}
namely we have a compatible $(\langle\pi(R)\rangle, \Lambda)$-grading
\begin{equation}
\tilde{\fg} = \bigoplus_{\lambda \in \Lambda, \gamma \in\langle\pi(R)\rangle} \fg^{\bar \lambda}_\gamma \otimes \CA^\lambda,
\end{equation}
where  for ${\lambda \in \Lambda}$, $\tilde\fg_\gamma^\lambda =\{0\}$ if $\gamma \notin \pi(R)$.

Next, $\Lam$ being  free abelian allows us to embed it in the $\bbbc$-vector space
%\begin{equation}\label{newtempeq17}
$\CV:=\bbbc \otimes_\Z \Lambda.$ We set
\begin{equation}\label{hatfg}
\hat{L}_\rho(\fg,\CA)=\hat \fg:=
%\widehat{L_\rho(\fg,\CA)}:=
\tilde \fg\oplus \CV \oplus \CV^\star
\andd \hat\fh:=\tilde \fh\oplus \CV\oplus \CV^\star=(\fh^0\otimes 1)\oplus\CV\oplus\CV^\star,
\end{equation}
%If $\rho=0$, we denote  $\hat\fg$  by $\widehat{L(\fg,\CA)}$. 
and make $\hat\fg$ into a Lie algebra with the Lie bracket:
\begin{equation}\label{bracketloop}\begin{array}{l}
\;[d,x]=d(\lambda) x,  \quad d\in\CV^\star,\;x\in \tilde \fg^\lambda,\;\lambda \in \Lambda,\\
\;[\CV,\hat\fg]=\{0\},\\
\;[x,y]=[x,y]_{\tilde\fg}+(x,y)\lambda,\;x\in\tilde\fg^\lam,\;y\in \tilde\fg, \end{array}\end{equation} where by
$[\cdot,\cdot]_{\tilde\fg}$ and $(\cdot,\cdot)$, we mean the Lie
bracket and the  bilinear form on ${\tilde\fg},$ respectively.
Now the form on
$\tilde\fg$ extends
to a non-degenerate form on $\hat \fg$ by
\begin{equation}\label{formloop}
\begin{array}{l}
(\CV,  \CV)=(\CV^\star,\CV^\star)=(\CV,\tilde\fg)=(\CV^\star,\tilde \fg):=\{0\},\\
(v,d)=(d,v):=d(v),\quad  d\in \CV^\star, v\in \CV.
%& (x,y)=(x,y)_{\tilde\fg} \quad x,y\in \tilde\fg. &\nonumber
\end{array}
\end{equation}

\begin{DEF}We call the Lie algebra $(\hat{L}_\rho(\fg,\aa),\fm,\hat\fh)$ an extended affinization of $(\fg,\fm,\fh)$.
\end{DEF}

By imbedding  $\Lam\hookrightarrow\CV \hookrightarrow(\CV^\star)^\star$ via the natural paring, we may identify $\Lam$ inside ${\hat\fh}^\star$.    
We also identify  $(\fh^\sg)^\star$ inside $(\hat{\fh})^\star$. 
Then we have
\begin{equation}\label{newtempeq13}
\hat\fg=\bigoplus_{\hat\alpha\in\hat{R}}{\hat\fg}_{\hat\alpha},
\end{equation}
where
$${\hat\fg}_{\hat\alpha}:=\{x\in\hat\fg\mid [\hat h,x]=\hat\alpha(\hat h)x\hbox{ for all }\hat h\in \hat\fh\},
$$
and
$$\hat R=\{\hat\a\in(\hat{\fh})^\star\mid \hat{\fg}_{\hat\a}\ne 0\}\sub\pi(R)\oplus\Lam.$$
%We have
%\begin{equation}\label{newtemeq8}
%\hat R\subseteq \pi(R)\oplus\Lambda,
%\end{equation}
We have, for $\a\in R$ and $\lambda\in\Lam$,
\begin{equation}\label{hatfgpa+lambda}
\hat\fg_{\pa+\lambda}=\left\{\begin{array}{ll}
\fg_\pa^{\bar\lambda}\otimes\CA^\lam&\mbox{if }\pa+\lambda\not=0,\\
(\fg^0_{\pi(0)} \otimes \CA^0 )\oplus \CV \oplus \CV^\star&\hbox{if
}\pa+\lambda=0.
\end{array}\right.
\end{equation}
Finally, we mention that
\begin{equation}\label{234}
\hat R=\cup_{\lam\in\Lam}(\pi(R_{\bar\lam})+\lam),\quad{where}\quad
R_{\bar\lam}=\{\a\in R\mid \fg^{\bar\lam}_\a\not=\{0\}\}.
\end{equation}

\begin{thm}\label{thm09}
The extended affinization  $(\hat\g,\fm,\hat\fh)$ of $(\fg,\fm,\fh)$ is an extended affine Lie algebra with root system 
$\hat R$.
\end{thm}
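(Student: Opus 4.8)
The plan is to verify the five EALA axioms (A1)--(A5) for the triple $(\hat\g,\fm,\hat\fh)$ directly, using the structural facts about $\hat\g$ already assembled in \eqref{hatfg}--\eqref{234}. First I would establish (A2): by construction $\hat\fh$ acts diagonalizably on $\hat\g$ with the root-space decomposition \eqref{newtempeq13}, so it suffices to check that $\hat\fh$ equals its own zero-weight space and is self-centralizing. The identification $\hat\fh=(\fh^\sg\otimes 1)\oplus\CV\oplus\CV^\star$ together with \eqref{hatfgpa+lambda} reduces the self-centralizing property to showing $\fg^0_{\pi(0)}\cap C(\fh^\sg)$ contributes nothing beyond $\fh^\sg$; this is precisely where the hypothesis $C_{\fg^\sg}(\fh^\sg)\sub\fh^\sg$ from \eqref{eq1} is used. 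Finite-dimensionality of $\hat\fh$ follows since $\Lam$ has finite rank and $\fh^\sg$ is finite-dimensional.

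Next I would treat (A1). Non-degeneracy and invariance of the form on $\tilde\fg$ are given by Lemma \ref{tildefgform}, and the extension \eqref{formloop} pairing $\CV$ with $\CV^\star$ non-degenerately is designed exactly so that the whole form on $\hat\g$ is non-degenerate; invariance under the bracket \eqref{bracketloop} is a routine check on homogeneous components. For (A3), local nilpotence of $\ad(x)$ for $x\in\hat\g_{\hat\a}$ with $\hat\a$ non-isotropic: I would pull this back from local nilpotence in $\fg$ (axiom (A3) for $\fg$) through the tensor construction, noting that a non-isotropic $\hat\a=\pi(\a)+\lam$ forces $\pi(\a)\neq 0$ on the finite part, and the derivation $\ad(x)$ raises the $\CV^\star$-grading in a controlled, eventually-vanishing way. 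Axiom (A4), freeness of $\la\hat R\ra$ of rank $\dim\hat\CV$, follows from $\hat R\sub\pi(R)\oplus\Lam$ together with freeness of $\Lam$ and the corresponding property \eqref{eqram3}-type nondegeneracy for $\pi(R)$ inside $\fh^\sg$.

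The substantive work is (A5), which splits into the indecomposability of $\hat R^\times$ (A5a) and the non-isolatedness of $\hat R^0$ (A5b). For (A5a) I would argue that $\hat R^\times$ connects across the $\Lam$-layers of \eqref{234}: the non-isotropic roots in each layer $\pi(R_{\bar\lam})+\lam$ are linked to those in adjacent layers because $\sg$ has finite order and the cocycle structure \eqref{coordinate} lets one shift $\lam$ while staying non-isotropic, using the last condition of \eqref{eq1} to guarantee at least one genuinely non-isotropic $\pi(\a)$. For (A5b), I would show every isotropic $\hat\a$ arises as a difference of two non-isotropic roots by combining the non-isolatedness of $R^0$ in $\fg$ with the observation that purely-$\Lam$ directions $\lam\in\Lam$ are realized as $(\pi(\a)+\lam)-\pi(\a)$ whenever $\pi(\a)$ is non-isotropic and both summands lie in $\hat R$.

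The main obstacle I anticipate is (A5), specifically showing $\hat R^\times$ is indecomposable: one must transfer the indecomposability of $R^\times$ through the $\sg$-folding $\pi$ and then ensure the affinization layers are genuinely linked rather than splitting into orthogonal pieces. This is where the finite order of $\sg$, the grading compatibility \eqref{eq11}, and the non-isotropy hypothesis \eqref{eq1} must all be combined carefully; the other axioms are comparatively mechanical verifications on the explicit homogeneous decomposition. I would also expect the self-centralizing check in (A2) to require the hypothesis $C_{\fg^\sg}(\fh^\sg)\sub\fh^\sg$ in an essential way, so that clause of \eqref{eq1} is not merely decorative.
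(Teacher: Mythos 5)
Your proposal is correct in outline but takes a genuinely different route from the paper. The paper's entire proof is two sentences: it observes that the fifth condition of (\ref{eq1}) guarantees that $\hat R$ contains a non-isotropic root, and then invokes \cite[Theorem 7.21]{AHY13} (the extended-affinization theorem for invariant affine reflection algebras), which packages the verification of (A1)--(A5) once and for all. You instead propose to verify the five axioms from scratch on the explicit decompositions (\ref{newtempeq13})--(\ref{234}); this is essentially a re-derivation, in the present special case, of the cited theorem. What your route buys is transparency: it makes visible exactly where each clause of (\ref{eq1}) enters --- the fourth clause $C_{\fg^\sg}(\fh^\sg)\sub\fh^\sg$ for the self-centralizing property of $\hat\fh$ via (\ref{hatfgpa+lambda}), and the fifth clause for the existence of non-isotropic roots, which is the only point the paper checks explicitly. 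What the paper's route buys is brevity and reuse of an already-proved general statement. One caveat on your sketch: the two places you flag as substantive, (A3) and (A5a), are precisely the content delegated to \cite{AHY13}, and your treatment of (A3) is the thinnest. A vector in $\hat\fg_{\hat\a}$ with $\hat\a=\pi(\a)+\lam$ non-isotropic has the form $y\otimes a^\lam$ where $y$ is a \emph{sum} of $\fg$-root vectors $y_\b$ with $\pi(\b)=\pi(\a)\neq 0$; local nilpotence then follows not from ``raising the $\CV^\star$-grading'' (in fact $[\CV,\hat\fg]=0$ and $\ad(x)^2$ kills $\CV^\star$) but from the quadratic growth of $\bigl(n\pi(\a)+\pi(\gamma),\,n\pi(\a)+\pi(\gamma)\bigr)$ against the boundedness of norms in $\pi(R)$. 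If you intend a self-contained proof, that estimate should be made explicit; otherwise the honest course is the paper's, namely to cite the general affinization theorem after checking its hypotheses.
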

\proof
The fifth condition of (\ref{eq1}) guarantees the existence of non-isotropic roots in $\hat R$. Then the result follows from \cite[Theorem 7.21]{AHY13}.

\begin{rem}\label{remt0}
{One observes that  $\tilde\fg$ is in fact the fixed point subalgebra of $\fg\otimes\CA$ under the automorphism $\sg\otimes\eta$ where $\eta$ is the automorphism of $\CA$ given by $\eta(a^\lam)=\omega^{-\bar\lam}a^\lam.$}
\end{rem}

In what follows we investigate how a Chevalley automorphism of an
extended affine Lie algebra $\fg$  can be promoted to an automorphism of its affinization $\hat\fg$. 
\begin{DEF}\label{chev1} Let $(\fg,\fm,\fh)$ be an extended affine Lie algebra and $\sg$ be an automorphism of $\fg$ satisfying (\ref{eq1}).

\noindent(i) We call an automorphism $\tau$ of $\fg$ a {\it Chevalley automorphism} if

- $\tau$ is of finite order,

- $\tau(h)=-h$ for $h\in\fh$.\\
%- $\tau(\fg_\a)=\fg_{-\a}$ for $\a\in\fh^\star$.\\
\noindent(ii) A pair $(\tau,\psi)$ of finite order automorphisms of $\fg$ is called a $\sg$-Chevalley pair if

- $\tau$ is a Chevalley automorphism, 

- $\sg\tau=\tau\sg$,

- $\psi(\fg^{\bar j})=\fg^{-\bar j}$, for $j=0,\ldots,m-1$,

- $\psi$ preserves the form,

- $\psi(h)=h$ for $h\in\fh^{\bar{0}}$.
\end{DEF}

\begin{lem}\label{chev4}
Assume that $\tau$ is a Chevalley automorphism of $\fg$. Then

(i) {$\tau(\fg_\a)=\fg_{-\a}$ for $\a\in\fh^\star$.}

(ii) {$(\tau(x),\tau(y)=(x,y)$ for $x,y\in\fg$.}

(iii) $\tau$ is of even order.

\end{lem}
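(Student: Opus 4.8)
The plan is to prove the three assertions in order, each following quickly from the defining properties of a Chevalley automorphism together with the EALA axioms already recorded.

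For part (i), I would argue via the weight-space characterization. Fix $\a\in\fh^\star$ and $x\in\fg_\a$, so that $[h,x]=\a(h)x$ for all $h\in\fh$. Since $\tau$ is an automorphism, $[\tau(h),\tau(x)]=\tau([h,x])=\a(h)\tau(x)$. Now I use the defining relation $\tau(h)=-h$, which gives $[-h,\tau(x)]=\a(h)\tau(x)$, i.e.\ $[h,\tau(x)]=-\a(h)\tau(x)=(-\a)(h)\tau(x)$. Hence $\tau(\fg_\a)\sub\fg_{-\a}$. Applying the same reasoning to $\tau^{-1}$ (which also satisfies $\tau^{-1}(h)=-h$, since $\tau(h)=-h$ forces $\tau^{-1}(h)=-h$) yields the reverse inclusion $\fg_{-\a}\sub\tau(\fg_\a)$, so equality holds.

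For part (ii), the key point is that the form is recovered from the bracket and the restriction to $\fh$, and $\tau$ inverts $\fh$. I would first observe that $\tau(\fh)=\fh$, so $\tau$ restricts to an automorphism of $\fh$ acting as $-\id$; thus for $h,h'\in\fh$ one computes $(\tau(h),\tau(h'))=(-h,-h')=(h,h')$, and the form is preserved on $\fh$. Next, invariance of $\form$ together with $\tau$ being a Lie algebra automorphism gives, for root vectors, that $\tau$ maps $\fg_\a$ to $\fg_{-\a}$ by part (i), so $(\tau(x),\tau(y))=0$ unless $\a+\b=0$, matching the orthogonality pattern of the original form. The remaining case is $x\in\fg_\a$, $y\in\fg_{-\a}$ with $\a\in R^\times$: here I use Lemma~\ref{oldlem}(ii), namely $[x,y]=(x,y)t_\a$, apply $\tau$ to get $[\tau(x),\tau(y)]=(x,y)\tau(t_\a)$, and compare with $[\tau(x),\tau(y)]=(\tau(x),\tau(y))t_{-\a}$ (again by Lemma~\ref{oldlem}(ii), since $\tau(x)\in\fg_{-\a}$, $\tau(y)\in\fg_\a$). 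Since $\tau(t_\a)=-t_\a=t_{-\a}$, comparing coefficients yields $(\tau(x),\tau(y))=(x,y)$. Extending bilinearly over the root-space decomposition finishes this part; the isotropic root spaces are handled by pairing against the non-isotropic ones through invariance, or more directly by reducing every element to brackets of non-isotropic root vectors using that $\fg_c$ is generated by non-isotropic root spaces.

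For part (iii), I would argue that $\tau$ cannot have odd order. Suppose $\tau$ has finite order $n$. Since $\tau(h)=-h$ for all $h\in\fh$, we get $\tau^n(h)=(-1)^n h$. But $\tau^n=\id$ forces $(-1)^n h=h$ for all $h\in\fh$, and as $\fh\neq 0$ this requires $(-1)^n=1$, i.e.\ $n$ is even. The main obstacle I anticipate is part (ii): while the argument on $\fh$ and on individual non-isotropic pairs is immediate, one must be careful to cover the isotropic root spaces and to argue that the form-preserving property propagates across the whole (possibly infinite-dimensional) root-space decomposition; the cleanest route is to reduce to non-isotropic generators via Lemma~\ref{oldlem}(iv) and the fact that the relevant algebra is generated by non-isotropic root spaces, so that invariance of the bracket under $\tau$ transfers directly to invariance of the form.
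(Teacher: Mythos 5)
Your treatment of (i) and (iii) is correct and matches the paper's (the paper derives (iii) directly from (i), but your computation $\tau^n(h)=(-1)^n h$ is the same idea). For (ii), your computation on $\fh$ and on pairs $\fg_\a\times\fg_{-\a}$ with $\a\in R^\times$ is exactly the paper's argument; the one genuine weak point is how you propose to dispose of the nonzero isotropic root spaces. Your ``more direct'' option --- reducing every element to brackets of non-isotropic root vectors because $\fg_c$ is generated by non-isotropic root spaces --- only reaches $\fg_c\cap\fg_\sg$, and for $\sg\in R^0\setminus\{0\}$ this is in general a proper subspace of $\fg_\sg$ (the core need not contain the whole isotropic root space), so that route does not cover all of $\fg$. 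Your other option, ``pairing against the non-isotropic ones through invariance,'' is left too vague to carry the argument as stated.

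The repair is immediate, and it is what the paper does: Lemma~\ref{oldlem}(ii) is stated for \emph{every} $\b\in R$, isotropic or not, so the identity $[x_\b,x_{-\b}]=(x_\b,x_{-\b})\,t_\b$ and your comparison of coefficients apply verbatim with $\b=\sg$ a nonzero isotropic root. The only additional input is that $t_\sg\neq 0$ for $\sg\neq 0$, which follows from nondegeneracy of the form on $\fh$; together with $\tau(t_\sg)=-t_\sg=t_{-\sg}$ this gives $(\tau(x_\sg),\tau(x_{-\sg}))=(x_\sg,x_{-\sg})$ exactly as in your non-isotropic case. With that substitution (and the observation you already make that both sides vanish when $\a+\b\neq 0$, while $\a=\b=0$ is the $\fh$ case), your proof coincides with the paper's.
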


\proof
Since $\tau_{|_\fh}=-\id$, we have for $x\in\fg_\a$ and $h\in\fh$, 
$[h,\tau(x)]=\tau([-h,x])=-\a(h)\tau(x)$, so (i) holds.

Next, let $\a\not=0$, $x_\a\in\fg_\a$ and $x_\b\in\fg_{\b}$, then 
by (i), 
$(x_\a,x_\b)=(\tau(x_\a),\tau(x_\b))=0$ if $\a+\b\not=0$. 
If $\b=-\a,$ then
$$t_{-\a}(\tau(x_\a),\tau(x_{-\a}))=[\tau(x_\a),\tau(x_{-\a})]
=\tau([x_\a,x_{-\a}])=-t_\a(x_\a,x_{-\a}).$$
Thus $(\tau(x_\a),\tau(x_{-\a}))=(x_\a,x_{-\a})$. If $\a=0$, then for $h,h'\in\fg_0=\fh$ we have 
$(\tau(h),\tau(h'))=(-h,-h')=(h,h')$. All together, we get (ii). Part (iii) is clear from  (i).\qed

\begin{lem}\label{chev2}
{Let $(\tau,\psi)$ be $\sg$-Chevalley pair for $\fg$. Then
$\psi\tau(\fg_{\pi(\a)}\cap\fg^{\bar\lam})=\fg_{-\pi(\a)}\cap\fg^{-\bar\lam}$, for $\a\in R$, $\lam\in\Lam$.} 
\end{lem}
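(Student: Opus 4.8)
The plan is to track how the two automorphisms $\tau$ and $\psi$ act on the relevant graded pieces and compose their effects. First I would recall the defining properties of a $\sg$-Chevalley pair from Definition \ref{chev1}: the Chevalley automorphism $\tau$ satisfies $\tau(\fg_\a)=\fg_{-\a}$ by Lemma \ref{chev4}(i), and it commutes with $\sg$; meanwhile $\psi$ satisfies $\psi(\fg^{\bar j})=\fg^{-\bar j}$ and also commutes with $\sg$ in the appropriate sense. The key point is to understand separately how each factor affects the weight $\pi(\a)$ (a weight for $\fh^\sg$) and the $\Z_m$-grading index $\bar\lam$.

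My first step would be to analyze $\tau$. Since $\tau(h)=-h$ on $\fh$ and $\sg\tau=\tau\sg$, the automorphism $\tau$ preserves each eigenspace $\fg^{\bar\lam}$ of $\sg$ (because $\tau$ commutes with $\sg$), while sending $\fg_\b\to\fg_{-\b}$. Consequently $\tau$ sends $\fg_{\pi(\a)}$, the sum of root spaces $\fg_\b$ with $\pi(\b)=\pi(\a)$, to $\fg_{-\pi(\a)}$, using that $\pi(-\b)=-\pi(\a)$ and the linearity of $\pi$. Hence I would establish the intermediate claim
\begin{equation}\label{tauclaim}
\tau(\fg_{\pi(\a)}\cap\fg^{\bar\lam})=\fg_{-\pi(\a)}\cap\fg^{\bar\lam}.
\end{equation}
Next I would analyze $\psi$: from $\psi(\fg^{\bar j})=\fg^{-\bar j}$ we get that $\psi$ flips the grading index $\bar\lam\mapsto-\bar\lam$, and since $\psi$ fixes $\fh^{\bar 0}=\fh^\sg$ pointwise, the weight $\pi(\a)$ (which is just restriction to $\fh^\sg$) is preserved under $\psi$; more precisely $\psi$ sends a weight-$\pi(\a)$ vector for $\fh^\sg$ to another weight-$\pi(\a)$ vector, because $[\psi(h),\psi(x)]=\psi([h,x])$ and $\psi(h)=h$ for $h\in\fh^\sg$. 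This yields
\begin{equation}\label{psiclaim}
\psi(\fg_{-\pi(\a)}\cap\fg^{\bar\lam})=\fg_{-\pi(\a)}\cap\fg^{-\bar\lam}.
\end{equation}

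Finally I would compose: applying $\tau$ first via \eqref{tauclaim} and then $\psi$ via \eqref{psiclaim} gives $\psi\tau(\fg_{\pi(\a)}\cap\fg^{\bar\lam})=\fg_{-\pi(\a)}\cap\fg^{-\bar\lam}$, which is exactly the assertion. The main obstacle I anticipate is \eqref{psiclaim}, specifically verifying that $\psi$ genuinely preserves the $\fh^\sg$-weight $\pi(\a)$ rather than merely permuting weights in some more complicated way. The subtle point is that $\psi$ fixes only $\fh^{\bar 0}=\fh^\sg$, not all of $\fh$, so I must argue entirely in terms of the $\fh^\sg$-action; the identity $\pi(\a)=\a|_{\fh^\sg}$ noted after \eqref{eqram3}, together with $\psi|_{\fh^\sg}=\id$, is what makes the weight invariant. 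I would also need to confirm that both $\tau$ and $\psi$ respect the decomposition into $\fg_{\pi(\a)}$-isotypic pieces, which follows since each is an automorphism intertwining the $\fh^\sg$-action appropriately, so that the intersections with $\fg^{\bar\lam}$ behave as claimed.
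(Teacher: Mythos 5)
Your proposal is correct and follows essentially the same route as the paper's proof: use $\tau(\fg_\b)=\fg_{-\b}$ (Lemma \ref{chev4}(i)) together with $\sg\tau=\tau\sg$ to get $\tau(\fg_{\pi(\a)}\cap\fg^{\bar\lam})=\fg_{-\pi(\a)}\cap\fg^{\bar\lam}$, and then use that $\fg_{\pi(\a)}$ is cut out by the $\fh^{\bar 0}$-action, on which $\psi$ acts as the identity, so $\psi$ preserves the $\pi(\a)$-weight while flipping the $\Z_m$-grading. The subtle point you flag — that one must argue purely with the $\fh^\sg$-weight $\pi(\a)=\a|_{\fh^\sg}$ rather than the full $\fh$-weight — is exactly the observation the paper relies on.
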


\proof {Since $\fg_{\pi(\a)}=\sum_{\{\b\in R\mid\pi(\b)=\pi(\a)\}}\fg_\b$, it follows from Lemma \ref{chev1}(i) that $\tau(\fg_{\pi(\a)})=\fg_{-\pi(\a)}$. Moreover, since $\tau$ commutes with $\sg$, it preserves the grading spaces $\fg^{\bar\lam}$, $\lam\in\Lam$. On the other hand since
$\fg_{\pi(\a)}=\{x\in\fg\mid [h,x]=\a(h)x\hbox{ for }h\in\fh^{\bar 0}\}$, we get $\psi(\fg_{\pi(\a)})\cap\fg^{\bar\lam}=\fg_{\pi(\a)}\cap\fg^{-\bar\lam}$. The result now is immediate from the above facts.}\qed

\begin{lem}\label{chev3}
{Let $\psi$ be an automorphism of $\fg$ and $\{X_{\b}\mid \b\in\bb\sub R\}$ be a set of Lie algebra generators  for the extended affine Lie algebra $\fg$.} If for each  $\b\in\bb$, there exists $\b'\in \bb$ such that $\psi\sg^i(X_{\b})=\sg^{-i}X_{\b'}$ for all $0\leq i\leq m-1$, then $\psi(\fg^{\bar j})\sub\fg^{-\bar j}$, for each
$j$.\end{lem}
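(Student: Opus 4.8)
Looking at this lemma, I need to prove that if ψ acts on generators in a specific twisted way with respect to σ, then ψ maps the σ-eigenspaces as stated.

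Let me think about what's going on here.The plan is to reduce the global statement about the grading spaces $\fg^{\bar j}$ to the hypothesis on generators, using the fact that both $\psi$ and $\sg$ are algebra homomorphisms together with the projection formula (\ref{eqram}). First I would recall that the grading space $\fg^{\bar j}$ is precisely the image $\pi_j(\fg)$, where $\pi_j=\frac{1}{m}\sum_{i=0}^{m-1}\omega^{-ij}\sg^i$; so it suffices to show that $\psi\bigl(\pi_j(\fg)\bigr)\sub\fg^{-\bar j}=\pi_{-j}(\fg)$. Since $\fg$ is generated as a Lie algebra by $\{X_\b\mid\b\in\bb\}$, and since $\psi$ (being an automorphism) respects brackets, it is enough to track how $\psi$ interacts with $\pi_j$ on the generators and then propagate this through arbitrary iterated brackets.

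The key computation I would carry out is the following. Apply $\psi$ to $\pi_j(X_\b)=\frac{1}{m}\sum_{i=0}^{m-1}\omega^{-ij}\sg^i(X_\b)$ and use the hypothesis $\psi\sg^i(X_\b)=\sg^{-i}(X_{\b'})$ for a fixed $\b'$ depending on $\b$. This gives
\begin{equation}\label{chev3-main}
\psi\pi_j(X_\b)=\frac{1}{m}\sum_{i=0}^{m-1}\omega^{-ij}\sg^{-i}(X_{\b'})
=\frac{1}{m}\sum_{i=0}^{m-1}\omega^{i(-j)}\sg^{-i}(X_{\b'}),
\end{equation}
and after re-indexing the sum by $i\mapsto -i$ (legitimate modulo $m$, as $\sg^m=\id$ and $\omega^m=1$), the right-hand side is exactly $\pi_{-j}(X_{\b'})\in\fg^{-\bar j}$. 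Thus on each generator the twisted intertwining relation $\psi\pi_j=\pi_{-j}\psi$ (at least applied to generators) holds, confirming $\psi\bigl(\pi_j(X_\b)\bigr)\in\fg^{-\bar j}$.

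To pass from generators to all of $\fg^{\bar j}$, I would argue by induction on bracket length. An arbitrary element of $\fg$ is a linear combination of iterated brackets of the $X_\b$; since $\sg$ is an automorphism, $\pi_j$ of a bracket decomposes via the homogeneity relation in (\ref{eq11}), namely $[\pi_a(x),\pi_b(y)]=\pi_{a+b}[x,\pi_b(y)]$, so the $\Z_m$-grading is compatible with brackets: $[\fg^{\bar a},\fg^{\bar b}]\sub\fg^{\overline{a+b}}$. Because $\psi$ preserves brackets and, by the inductive step, sends each homogeneous factor of degree $\bar a$ into degree $-\bar a$, it sends a product of degree $\bar a+\bar b$ into degree $-\bar a-\bar b=-(\overline{a+b})$, as required. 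Taking linear combinations and intersecting with the graded pieces then yields $\psi(\fg^{\bar j})\sub\fg^{-\bar j}$ for every $j$.

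The main obstacle I anticipate is the bookkeeping in the re-indexing step of (\ref{chev3-main}): one must check carefully that replacing $i$ by $-i$ modulo $m$ produces precisely $\pi_{-j}$ and not some shifted projector, and that the dependence $\b\mapsto\b'$ does not interfere (it does not, since $\b'$ is fixed once $\b$ is). A secondary subtlety is ensuring the inductive argument genuinely uses only that $\psi$ is an algebra homomorphism and the gradedness of the bracket, so that no extra hypotheses on $\psi$ (such as preserving the form) are silently invoked. Once the generator case and the grading compatibility are in place, the induction is routine.
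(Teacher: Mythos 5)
Your proposal is correct and follows essentially the same route as the paper: the key step in both is applying the hypothesis $\psi\sg^i(X_\b)=\sg^{-i}X_{\b'}$ inside the sum defining $\pi_j$ and re-indexing $i\mapsto -i$ to recognize $\pi_{-j}$. The only (harmless) organizational difference is that the paper performs this computation directly on an arbitrary iterated bracket $[X_{\b_1},\ldots,X_{\b_t}]$, pulling $\sg^{-i}$ out of the bracket in one stroke, whereas you treat a single generator first and then propagate via the compatibility $[\fg^{\bar a},\fg^{\bar b}]\sub\fg^{\overline{a+b}}$ and an induction on bracket length.
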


\proof
Let $X=[X_{\b_{1}},\ldots, X_{\b_{t}}]$, $\b_i\in\bb$. Then
\begin{eqnarray*}
\psi\pi_j(X)&=&\frac{1}{m}\sum_{i}\omega^{-ij}\psi\sg^i[X_{\b_1},\ldots, X_{\b_t}]\\
&=&\frac{1}{m}\sum_i\omega^{-ij}[\sg^{-i}X_{\b'_1},\ldots,\sg^{-i}X_{\b'_t}]\\
&=&
\frac{1}{m}\sum_i\omega^{ij}\sg^{i}[X_{\b'_1},\ldots,X_{\b'_t}]\\
&=&\frac{1}{m}\pi_{-j}([X_{\b'_1},\ldots,X_{\b'_t}])\in\fg^{-\bar j}.
\end{eqnarray*}
\qed

\begin{pro}\label{pro115}
 {Assume that the extended affine Lie algebra $(\fg,\fm,\fh)$ admits
 a $\sg$-Chevalley pair $(\tau,\psi)$.  
%  satisfying
% 
% \red{- $\psi(x)=x$, for all $x\in\fh^{\bar 0}$,}
% 
%- $\psi(\fg^{\bar\lam})=\fg^{-\bar\lam}$, for all $\lam\in\Lam$,
%
%- $(\psi(x),\psi(y))=(x,y)$, for all $x,y$,
 Then the assignment 
$$\pi_\lam(x)\otimes a^\lam+\gamma+d\mapsto \psi\tau(\pi_{\lam}(x))\otimes a^{-\lam}-\gamma-d,
 $$ 
 ($x\in\fg,$ $\lam\in\Lam,$ $\gamma\in\v,$ $d\in\v^\star$)  
 defines an automorphism ${\bar\tau}_{\psi}$ of $\hat\ll(\fg,\aa)$ which restricts to a Chevalley automorphism of ${\hat\ll}_\rho(\fg,\aa)$.

}
\end{pro}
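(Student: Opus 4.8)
The plan is to verify that the proposed map, call it $\bar\tau_\psi$, is a well-defined Lie algebra automorphism of $\hat\ll(\fg,\aa)$ and then check the two defining properties of a Chevalley automorphism (finite order and acting as $-\id$ on the Cartan) on the loop subalgebra $\hat\ll_\rho(\fg,\aa)$. First I would confirm the map is well-defined on $\tilde\fg\oplus\CV\oplus\CV^\star$: the key point is that $\psi\tau$ preserves the decomposition underlying \eqref{tildefgpa}. By Lemma \ref{chev2}, $\psi\tau(\fg_{\pi(\a)}\cap\fg^{\bar\lam})=\fg_{-\pi(\a)}\cap\fg^{-\bar\lam}$, so the component $\pi_\lam(x)\otimes a^\lam$ (with $\pi_\lam(x)\in\fg^{\bar\lam}$) is sent to $\psi\tau(\pi_\lam(x))\otimes a^{-\lam}$, which correctly lands in $\fg^{-\bar\lam}\otimes\aa^{-\lam}=\tilde\fg^{-\lam}$; thus the formula respects the grading and maps $\hat\fg$ into itself, preserving the loop subalgebra since $-\lam$ ranges over $\Lam$ as $\lam$ does and $\rho(-\lam)=-\rho(\lam)$.

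Next I would check that $\bar\tau_\psi$ is a homomorphism for each piece of the bracket \eqref{bracketloop}. For the main bracket $[x,y]=[x,y]_{\tilde\fg}+(x,y)\lam$ with $x\in\tilde\fg^\lam$, I would use that both $\tau$ and $\psi$ are Lie algebra automorphisms of $\fg$, so $\psi\tau[x,y]_{\fg}=[\psi\tau(x),\psi\tau(y)]_{\fg}$, and that the multiplication $a^\lam a^\mu=\mu(\lam,\mu)a^{\lam+\mu}$ transforms compatibly under $a^\lam\mapsto a^{-\lam}$ thanks to the cocycle symmetry $\mu(\lam,\mu)=\mu(-\lam,-\mu)$; this is exactly the relation built into \eqref{coordinate}. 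For the central term I need $(\psi\tau(x),\psi\tau(y))=(x,y)$ and the sign flip $\lam\mapsto-\lam$ to combine correctly with $-\gamma$ on the $\CV$ side: Lemma \ref{chev4}(ii) gives $\tau$ form-preserving and $\psi$ preserves the form by hypothesis, so $\psi\tau$ is an isometry, and then $(\psi\tau(x),\psi\tau(y))(-\lam)$ matches $\bar\tau_\psi$ applied to $(x,y)\lam$. The degree-derivation bracket $[d,x]=d(\lam)x$ for $d\in\CV^\star$, $x\in\tilde\fg^\lam$, is handled by noting $d\mapsto -d$ and $x\mapsto\bar\tau_\psi(x)\in\tilde\fg^{-\lam}$, so $[\bar\tau_\psi(d),\bar\tau_\psi(x)]=(-d)(-\lam)\bar\tau_\psi(x)=d(\lam)\bar\tau_\psi(x)=\bar\tau_\psi([d,x])$, while $[\CV,\hat\fg]=0$ is preserved since $\CV\mapsto -\CV$.

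Finally I would establish the two Chevalley properties on $\hat\ll_\rho(\fg,\aa)$. That $\bar\tau_\psi$ acts as $-\id$ on $\hat\fh=(\fh^{\bar0}\otimes1)\oplus\CV\oplus\CV^\star$ follows immediately: on $\CV$ and $\CV^\star$ the formula gives $-\id$ by construction, and on $\fh^{\bar0}\otimes1$ one has $\psi\tau(h)=\psi(-h)=-h$ using $\tau|_\fh=-\id$ and $\psi(h)=h$ for $h\in\fh^{\bar0}$ from Definition \ref{chev1}(ii). For finite order, I would compute $\bar\tau_\psi^2$: on $\tilde\fg$ it acts by $(\psi\tau)^2$ on the $\fg$-factor together with $a^\lam\mapsto a^\lam$ (the double sign flip), and on $\CV\oplus\CV^\star$ it is the identity; since $\tau$ and $\psi$ are of finite order and $\sg\tau=\tau\sg$ ensures $\psi\tau$ is a finite-order automorphism of $\fg$, some power $(\psi\tau)^{2N}=\id$, whence $\bar\tau_\psi^{2N}=\id$. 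The main obstacle I anticipate is bookkeeping the interaction of the cocycle $\mu$ with the sign reversal $\lam\mapsto-\lam$ in the bracket and the form simultaneously, since one must confirm that the factors $\mu(\lam,\tau)$ appearing in both the multiplication \eqref{coordinate} and the form $\ep$ conspire with the symmetry $\mu(\lam,\tau)=\mu(-\lam,-\tau)$ so that no spurious scalar survives; verifying this is the one genuinely computational step, but it reduces to the stated cocycle identities rather than anything deeper.
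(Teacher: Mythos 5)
Your proposal is correct and follows essentially the same route as the paper's proof: both check that the bracket and the central/derivation terms are preserved using the cocycle symmetry $\mu(\lam,\lam')=\mu(-\lam,-\lam')$ together with the fact that $\psi\tau$ preserves the form, both show stability of the loop subalgebra because $\psi\tau$ carries $\fg^{\bar\lam}$ to $\fg^{-\bar\lam}$ (your appeal to Lemma \ref{chev2} versus the paper's direct use of $\tau\pi_\lam=\pi_\lam\tau$ and $\psi(\pi_\lam(\fg))=\pi_{-\lam}(\fg)$ is an immaterial difference), and both read off the $-\id$ action on $\hat\fh$ from $\tau|_{\fh}=-\id$ and $\psi|_{\fh^{\bar 0}}=\id$. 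The one imprecision is your parenthetical claim that $\sg\tau=\tau\sg$ ensures $\psi\tau$ has finite order --- that implication does not hold as stated, since the orders of $\tau$ and $\psi$ control the order of their product only when they interact suitably; the paper likewise asserts finite order of $\bar\tau_\psi$ without further justification, so this does not distinguish your argument from the published one.
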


{\proof {The first statement follows from the fact that for any form preserving automorphism $\Phi$ of $\fg$, the map
 $\bar\Phi$ induced by the assignment 
$\pi_j(x)\otimes a^\lam+\gamma+d\mapsto \Phi(\pi_{j}(x))\otimes a^{-\lam}-\gamma-d,
 $
 defines an automorphism of $\hat\ll(\fg,\aa)$ with inverse 
$\overline{\Phi^{-1}}$. To be more precise, for
$x\in\fg, \lambda\in\Lambda$, we have
$${\bar\Phi}(x\otimes a^{\lambda})=\sum_{j}{\bar\Phi}(\pi_{j}(x)\otimes a^{\lambda})=\sum_{j}\Phi(\pi_{j}(x))\otimes a^{-\lambda})=\Phi(x)\otimes a^{-\lambda}.$$
So for $x,y\in\fg$, $\lam,\lam'\in\Lam$, $\gamma,\gamma'\in\CV$ and $d,d'\in\CV^\star$, we have
\begin{equation*}
	\begin{split}
		{\bar\Phi}[x\otimes a^\lam+\gamma+d,
		y\otimes a^{\lam'}+\gamma'+d']=&{{\bar\Phi}\big([x,y]\otimes a^{\lam+\lam'}}
		+(x,y)\d_{\lam,-\lam'}\lam\big)\\
		&+{\bar\Phi}\big(d(\lam')y\otimes a^{\lam'}-d'(\lam)x\otimes a^{\lam}\big)\\
		=&[\Phi(x),\Phi(y)]\otimes {a^{-\lam-\lam'}}\\
		&+(\Phi(x), \Phi(y))\d_{-\lam,\lam'}(-\lam)\\
		&+d(\lam')\Phi(y)\otimes a^{-\lam'}-d'(\lam)\Phi(x) a^{-\lam}\\
		=&[{\bar\Phi} (x\otimes a^\lam+\gamma+d),{\bar\Phi}(y\otimes a^{\lam'}+\gamma'+d')],
	\end{split}
\end{equation*}
where to get the above equalities we have used the facts that $\Phi$ preserves the form and that $\mu(\lam,\lam')=\mu(-\lam,-\lam')$. 
This shows that $\bar\Phi$ is an automorphism of $\hat\ll(\fg,\CA)$, with inverse $\overline{\Phi^{-1}}$.}}   
% Also ${\bar\tau}_{|_{(\fh\otimes 1)\oplus\CV\oplus\CV^\star}}=-\id$, and $\bar\tau$ has finite order. Thus $\bar\tau$ is a Chevalley automorphism of $\hat\ll(\fg,\aa)$.}

{We now know that $\bar\tau_\psi$ is a finite order automorphism of $\hat\ll(\fg,\aa)$. Moreover, since $\tau$ commutes with each $\pi_\lam$, and $\psi$ sends the subspace $\pi_\lam(\fg)$ to $\pi_{-\lam}(\fg)$, we get 
 for $x\in\fg, \lam\in\Lam$, that
$$\bar\tau_{\psi}(\pi_{\lam}(x)\otimes a^{\lam})=\psi(\tau(\pi_{\lam}(x)))\otimes a^{-\lam}=\psi(\pi_{\lam}(\tau(x)))\otimes a^{-\lam}\in\pi_{-\lam}(\fg)\otimes a^{-\lam}.$$
So $\bar\tau_{\psi}(\hat\ll_\rho(\fg,\aa))\subseteq\hat\ll_\rho(\fg,\aa)$. Furthermore, since $\psi_{|_{\pi_0(\fh)}}=\id_{\fh}$, we have for $h\in\fh$,
${\bar\tau}_\psi(\pi_0(h))\otimes 1+d+\lam=\psi\tau(\pi_0(h))\otimes 1-d-\lam=-\pi_0(h)\otimes 1-d-\lam$. Thus
$\bar\tau_{\psi}$ is a Chevalley automorphism of $\hat\ll_\rho(\fg,\aa)$.}\qed

\begin{cor}\label{cor115-0} (Toroidal Lie algebras) 
Let $\fg$ be a finite dimensional simple Lie algebra, and $\tau$ be a Chevalley automorphism of $\fg$. Then 
$\bar\tau:={\bar\tau}_{\id}$ is a Chevalley automorphism of $\hat{L}(\fg,\aa)$. In particular, if $\aa$ is the algebra of Laurent polynomials in $\nu$ variables, then $\bar\tau$ is a Chevalley automorphism 
of the toroidal Lie algebra ${\hat\ll}(\fg,\aa)$.
\end{cor}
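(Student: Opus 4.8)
The plan is to obtain Corollary \ref{cor115-0} as the degenerate case $\sg=\id$, $\psi=\id$ of Proposition \ref{pro115}, so that the substantive work is already done and all that remains is to check the hypotheses and track the collapse of the construction. First I would observe that a finite dimensional simple Lie algebra $\fg$, together with its Killing form $\fm$ and a Cartan subalgebra $\fh$, is an extended affine Lie algebra in the sense of Section \ref{sub-sec-EALA}: axioms (A1)--(A2) are the classical non-degeneracy and Cartan conditions; (A3) holds because every nonzero root is non-isotropic and its root vectors are $\ad$-nilpotent; (A4) is the freeness of the root lattice of rank $\dim\v$; and (A5) holds because $R^\times$ is the irreducible finite root system of $\fg$ (as $\fg$ is simple) while $\v^0=\{0\}$ (the form being definite on $\v=\span_{\bbbr}R$), so the nullity is $0$.

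Next I would verify that the standing hypotheses (\ref{eq1}) hold for $\sg=\id$. The first three are immediate; the fourth, $C_{\fg^\sg}(\fh^\sg)\sub\fh^\sg$, reduces to the self-centralizing property $C_\fg(\fh)=\fh$ of a Cartan subalgebra; and the fifth, $(\a,\pi(\a))\ne 0$ for some $\a\in R$, holds since $\pi=\pi_0=\id$ and every nonzero root satisfies $(\a,\a)\ne 0$. Because $m=1$ and $\omega=1$, the only epimorphism $\rho:\Lam\to\Z_1$ is the zero map, the grading collapses to $\fg=\fg^{\bar 0}$, and each projection $\pi_\lam$ equals $\id$; hence $L_\rho(\fg,\aa)=L(\fg,\aa)=\fg\otimes\aa$ and $\hat{L}_\rho(\fg,\aa)=\hat{L}(\fg,\aa)$.

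I would then check that $(\tau,\id)$ is a $\sg$-Chevalley pair in the sense of Definition \ref{chev1}(ii): $\tau$ is a Chevalley automorphism by hypothesis, $\sg=\id$ commutes with $\tau$, and for $\psi=\id$ the last three bullet points are trivial since $\id(\fg^{\bar 0})=\fg^{\bar 0}=\fg^{-\bar 0}$, the identity preserves the form, and $\id$ fixes $\fh^{\bar 0}=\fh$ pointwise. Proposition \ref{pro115} then yields an automorphism $\bar\tau:=\bar\tau_{\id}$ of $\hat{L}(\fg,\aa)$, given concretely by $x\otimes a^\lam+\gamma+d\mapsto \tau(x)\otimes a^{-\lam}-\gamma-d$, which restricts to a Chevalley automorphism of $\hat{L}_\rho(\fg,\aa)$; since the two algebras coincide by the previous paragraph, $\bar\tau$ is a Chevalley automorphism of all of $\hat{L}(\fg,\aa)$.

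Finally, for the last assertion I would specialize $\aa$ to the Laurent polynomial algebra $\bbbc[t_1^{\pm 1},\ldots,t_\nu^{\pm 1}]$, i.e.\ the quantum torus over $\Lam=\Z^\nu$ with trivial cocycle $\mu\equiv 1$; then $L(\fg,\aa)=\fg\otimes\aa$ is exactly the toroidal Lie algebra of Definition \ref{loopalgebra}, and the previous step gives that $\bar\tau$ is a Chevalley automorphism of its affinization $\hat{L}(\fg,\aa)$. The argument is essentially bookkeeping; the only place demanding genuine attention is confirming that the degenerate choice $\sg=\id$, $\rho=0$ really collapses the loop construction, so that Proposition \ref{pro115} delivers an automorphism of the full toroidal affinization rather than of a proper subalgebra.
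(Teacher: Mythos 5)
Your proposal is correct and follows exactly the paper's route: the paper's entire proof is ``Apply Proposition \ref{pro115} with $\sg=\psi=\id$,'' and you do precisely that, merely spelling out the routine verifications (that a finite dimensional simple Lie algebra satisfies the EALA axioms, that (\ref{eq1}) holds for $\sg=\id$, that $(\tau,\id)$ is a $\sg$-Chevalley pair, and that the loop construction collapses so $\hat{L}_\rho(\fg,\aa)=\hat{L}(\fg,\aa)$) which the paper leaves implicit. No gap; your write-up is just a more explicit version of the same argument.
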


\proof {Apply Proposition \ref{pro115} with $\sg=\psi=\id$.}
\qed

\begin{cor}\label{cor115-1}
{Let $(\fg,\fm,\fh)$ be an extended affine Lie algebra and $\sg$ be an automorphism of order $2$. If $\fg$ admits a Chevalley automorphism
$\tau$ commuting with $\sg$, then $\bar\tau:={\bar\tau}_{\id}$ is a Chevalley automorphism of ${\hat\ll}_\rho(\fg,\aa)$.}
\end{cor}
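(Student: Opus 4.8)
The plan is to deduce this directly from Proposition \ref{pro115} by verifying that the pair $(\tau,\id)$ is a $\sg$-Chevalley pair and then invoking that proposition with $\psi=\id$. The very notation $\bar\tau:=\bar\tau_{\id}$ in the statement already signals that the intended choice for the second component of the pair is the identity automorphism, so the whole task reduces to checking the five defining conditions of Definition \ref{chev1}(ii) for $(\tau,\id)$.

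First I would dispatch the four easy conditions. The requirements that $\tau$ be a Chevalley automorphism and that $\sg\tau=\tau\sg$ are exactly the two hypotheses of the corollary, so they hold by assumption. The two conditions imposed on the second component, namely that it preserve the form and that it fix $\fh^{\bar 0}$ pointwise, are trivially satisfied by $\psi=\id$. The only condition that needs a moment's thought is $\psi(\fg^{\bar j})=\fg^{-\bar j}$ for $j=0,\ldots,m-1$, which with $\psi=\id$ becomes the assertion $\fg^{\bar j}=\fg^{-\bar j}$ for all $j$.

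This last point is precisely where the hypothesis that $\sg$ has order $2$ is used: the grading $\fg=\bigoplus_{\bar j\in\Z_m}\fg^{\bar j}$ is indexed by $\Z_m=\Z_2$, and in $\Z_2$ one has $\bar j=-\bar j$ for every $j$ (both $\bar 0=-\bar 0$ and $\bar 1=-\bar 1$). Hence $\fg^{\bar j}=\fg^{-\bar j}$ holds automatically, and the grading condition is met. Having verified all five conditions, $(\tau,\id)$ is a $\sg$-Chevalley pair, and Proposition \ref{pro115} then immediately gives that $\bar\tau_{\id}$ restricts to a Chevalley automorphism of $\hat\ll_\rho(\fg,\aa)$, which is the claim. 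I do not expect any genuine obstacle here; the entire content of the corollary is the observation that order $2$ forces $\bar j=-\bar j$, so that the identity becomes an admissible second component of a $\sg$-Chevalley pair, allowing the $\psi=\id$ specialization of Proposition \ref{pro115} to apply in this mildly twisted setting exactly as it did in the untwisted toroidal case of Corollary \ref{cor115-0}.
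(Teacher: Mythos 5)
Your proposal is correct and follows exactly the paper's own argument: the paper's proof is the one-line observation that since $\sg$ has order $2$, the pair $(\tau,\id)$ is a $\sg$-Chevalley pair, after which Proposition \ref{pro115} applies. Your verification of the five conditions of Definition \ref{chev1}(ii), with the order-$2$ hypothesis used precisely to get $\fg^{\bar j}=\fg^{-\bar j}$ in the $\Z_2$-grading, is just the expanded version of that same reasoning.
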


\proof
{Since $\sg$ has order $2$,  $(\tau,\id)$ is a $\sg$-Chevalley pair and so  we are done by Proposition
\ref{pro115}}.\qed

\begin{cor}\label{cor115} (Affine Lie algebras)
{Let $\fg$ be a finite dimensional simple Lie algebra, $\tau$ a Chevalley automorphism of $\fg$ and $\sg$  a non-trivial graph automorphism. Then ${\bar\tau}_\psi$ is a Chevalley automorphism for ${\hat\ll}_\rho(\fg,\aa)$ where $\psi=\id$ if $\sg$ has order $2$, and  $\psi$ is a graph automorphism of order $2$ otherwise.
%Then $(\tau,\psi)$ is a $\sg$-Chevalley pair for some automorphism $\psi$. ${\bar\tau}_\psi$ is a Chevalley automorphism of  ${\hat\ll}_\rho(\fg,\aa)$, for some $\sg$-reversing automorphism $\psi$.
 In particular if $\aa$
is the algebra of Laurent polynomials in one variable, then ${\bar\tau}_\psi$ is a Chevalley automorphism of the affine Lie algebra ${\hat\ll}_\rho(\fg,\aa)$.}
\end{cor}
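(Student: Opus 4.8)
The plan is to deduce the corollary directly from Proposition \ref{pro115}: it suffices to exhibit, in each case, a $\sg$-Chevalley pair $(\tau,\psi)$ for $\fg$ with the prescribed $\psi$, after which the assertion that $\bar\tau_\psi$ is a Chevalley automorphism of $\hat\ll_\rho(\fg,\aa)$ is exactly the conclusion of that proposition. Before invoking it I would record that a finite-order graph automorphism $\sg$ of a finite dimensional simple Lie algebra satisfies (\ref{eq1}): it has finite order, stabilizes $\fh$, preserves the form, its fixed point subalgebra $\fg^\sg$ is reductive with $\fh^\sg$ self-centralizing, and the induced form on $\fh^\sg$ is nondegenerate so that $(\a,\pi(\a))\neq 0$ for some $\a$. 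Hence the whole problem reduces to verifying the five conditions of Definition \ref{chev1}(ii).

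To set up, I would realize both automorphisms on a common Chevalley system $\{e_i,f_i,h_i\}$: take $\sg$ to be the underlying diagram permutation, $\sg(e_i)=e_{\sg(i)}$, $\sg(f_i)=f_{\sg(i)}$, $\sg(h_i)=h_{\sg(i)}$, and $\tau$ the associated Chevalley involution $e_i\mapsto -f_i$, $f_i\mapsto -e_i$, $h\mapsto -h$, which is a Chevalley automorphism of finite order in the sense of Definition \ref{chev1}(i). On generators $\tau\sg(e_i)=-f_{\sg(i)}=\sg\tau(e_i)$, and likewise on the $f_i$ and $h_i$, so $\sg\tau=\tau\sg$; this settles the first two conditions. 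The fourth condition, that $\psi$ preserves the form, is immediate in both cases: the identity preserves the form, and any graph automorphism permutes the root spaces isometrically, hence preserves the normalization (\ref{fm1}).

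It remains to verify $\psi(\fg^{\bar j})=\fg^{-\bar j}$ and $\psi|_{\fh^{\bar 0}}=\id$, and here the two cases genuinely differ; this is the crux. If $\sg$ has order $2$, take $\psi=\id$: in $\Z_2$ one has $-\bar j=\bar j$, so $\fg^{-\bar j}=\fg^{\bar j}=\id(\fg^{\bar j})$, and $\id$ fixes $\fh^{\bar 0}$ pointwise, so both hold trivially. If $\sg$ has order $3$, then necessarily $\fg$ is of type $D_4$ and $\sg$ is a triality, i.e. a $3$-cycle on the three outer nodes, whose full diagram automorphism group is $S_3$. I would then take $\psi$ to be a transposition, an order-$2$ graph automorphism. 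Since in $S_3$ every transposition conjugates a $3$-cycle to its inverse, $\psi\sg\psi^{-1}=\sg^{-1}$; consequently, for $x\in\fg^{\bar j}$ one computes $\sg(\psi(x))=\psi(\sg^{-1}(x))=\omega^{-j}\psi(x)$, whence $\psi(\fg^{\bar j})\subseteq\fg^{-\bar j}$, with equality because $\psi$ is an involution. Finally $\fh^{\bar 0}=\fh^\sg$ is spanned by the coroot of the fixed central node and by the sum of the coroots over the three-element $\sg$-orbit; any transposition fixes each of these, so $\psi|_{\fh^{\bar 0}}=\id$. Thus $(\tau,\psi)$ is a $\sg$-Chevalley pair in every case, and Proposition \ref{pro115} applies.

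For the final assertion I would specialize to $\aa=\bbbc[t^{\pm 1}]$, so that $\Lam\cong\Z$ has rank one and $\rho\colon\Z\twoheadrightarrow\Z_m$ is the canonical surjection; then $\hat\ll_\rho(\fg,\aa)$ is the (twisted, since $\sg$ is nontrivial) affine Kac--Moody algebra attached to $(\fg,\sg)$, and the statement just proved gives the Chevalley automorphism $\bar\tau_\psi$ on it. The step I expect to demand the most care is the order-$3$ triality case: one must produce a single order-$2$ graph automorphism $\psi$ that simultaneously inverts $\sg$ by conjugation (to force $\psi(\fg^{\bar j})=\fg^{-\bar j}$) and acts as the identity on $\fh^\sg$, both of which rest on the explicit $S_3$-action on the $D_4$ diagram. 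This is precisely why the auxiliary $\psi$ is unavoidable when $m=3$ but may be taken trivial when $m=2$.
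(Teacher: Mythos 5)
Your proof is correct and follows essentially the same route as the paper: exhibit a $\sg$-Chevalley pair $(\tau,\psi)$ --- with $\psi=\id$ when $\sg$ has order $2$ and $\psi$ an order-$2$ diagram automorphism of $D_4$ when $\sg$ is the triality --- and then invoke Proposition \ref{pro115}. The only cosmetic difference is that you verify $\psi(\fg^{\bar j})=\fg^{-\bar j}$ from the conjugation relation $\psi\sg\psi^{-1}=\sg^{-1}$ in $S_3$, whereas the paper checks the equivalent generator-wise identities $\psi\sg^k(X)=\sg^{-k}X$ or $\sg^{-k}\psi(X)$ and applies Lemma \ref{chev3}.
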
  

\proof One knows that for $\sg$ and $\tau$ as in the statement $\sg\tau=\tau\sg$.
If $\sg$ has order $2$, then we are done by Corollary \ref{cor115-1}.

Suppose next that $\sg$ has order $3$, namely  $\sg$ is the automorphism of $D_4$ induced by Dynkin diagram automorphism:

\begin{center}
\scriptsize{
  \begin{tikzpicture}
	\node[dnode,label=below:$2$] (2) at (0,0) {};
	\node[dnode,label=below:$1$] (1) at (-1.2,0) {};
	\node[dnode,label=right:$3$] (3) at (.6,-.8) {};
	\node[dnode,label=right:$4$] (4) at (.6,.8) {};
     
    \path (1) edge[sedge] (2)
          (2) edge[sedge] (3)
              edge[sedge] (4);
    
    \draw[->,shorten >=1.5pt,shorten <=1.5pt,thick] (1) -- (3);
    \draw[->,shorten >=1.5pt,shorten <=1.5pt,thick] (3) -- (4);
    \draw[->,shorten >=1.5pt,shorten <=1.5pt,thick] (4) -- (1);
  \end{tikzpicture}
  }
\end{center}
We take $\psi$ to be be the order $2$ diagram automorphism of $D_4$ induced by the diagram automorphism:
\begin{center}
\scriptsize{
  \begin{tikzpicture}
	\node[dnode,label=below:$2$] (2) at (0,0) {};
	\node[dnode,label=below:$1$] (1) at (-1.2,0) {};
	\node[dnode,label=right:$3$] (3) at (.6,-.8) {};
	\node[dnode,label=right:$4$] (4) at (.6,.8) {};
    
    \path (1) edge[sedge] (2)
          (2) edge[sedge] (3)
              edge[sedge] (4);
    
    \draw[<->,shorten >=1.5pt,shorten <=1.5pt,thick] (3) -- (4);
  \end{tikzpicture}
  }
\end{center}
Now if $X\in\{h_{\a_i}, E_{\pm i}\mid 1\leq i\leq 4\}$ is a standard  Chevalley generator for $\fg$, then we simply check that for $k=0,1,2,$
$$\psi\sg^k(X)=\left\{\begin{array}{ll}
\sg^{-k}X&\hbox{if }X\in\{h_{\a_i},E_{\pm\a_i}\mid i=1,2\},\\
\sg^{-k}\psi(X)&\hbox{if }X\in\{h_{\a_i},E_{\pm\a_i}\mid i=3,4\}.
\end{array}\right.
$$
Thus by Lemma \ref{chev2},  $(\tau,\psi)$ is $\sg$-Chevalley pair (see also \cite[Lemma 3.8.11]{Mit85}). Hence $\bar\tau_{\psi}$ is a Chevalley automorphism of $\hat\ll_\rho(\fg,\aa)$.\qed

\begin{cor}\label{cor115-2} (Elliptic Lie algebras) 
Let $\fg$ be an affine Lie algebra,  $\sg$ be a non-identity graph automorphism and $\tau$ be a Chevalley automorphism of $\fg$. Then
${\bar\tau}_\psi$ is a Chevalley automorphism of ${\hat\ll}_\rho(\fg,\aa)$ for some graph automorphism $\psi$. In particular if $\aa$ is the algebra of Laurent polynomials in one variable then ${\bar\tau}_\psi$ is a Chevalley automorphism of the elliptic Lie algebra ${\hat\ll}_\rho(\fg,\aa)$.
\end{cor}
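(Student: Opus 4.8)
The plan is to reduce Corollary \ref{cor115-2} to Proposition \ref{pro115} by exhibiting, for each nontrivial graph automorphism $\sg$ of an affine Lie algebra $\fg$, an appropriate companion automorphism $\psi$ so that $(\tau,\psi)$ is a $\sg$-Chevalley pair. Since Proposition \ref{pro115} already shows that any $\sg$-Chevalley pair yields a Chevalley automorphism $\bar\tau_\psi$ of $\hat\ll_\rho(\fg,\aa)$, and since the algebra of Laurent polynomials in one variable is the basic case of a quantum torus $\aa$, the ``in particular'' clause is immediate once the first assertion is established. So the entire task is to produce $\psi$ and verify the five defining conditions of a $\sg$-Chevalley pair in Definition \ref{chev1}(ii).

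First I would recall the classification of graph automorphisms of affine Kac--Moody algebras: the nontrivial diagram symmetries of the affine Dynkin diagrams have order $2$ or $3$ (the order-$3$ case arising only from the triality symmetry of $D_4^{(1)}$, and order-$2$ cases from the various reflection symmetries). For each such $\sg$ one chooses a Chevalley automorphism $\tau$ of $\fg$; by hypothesis $\tau$ exists, and one knows from the standard theory (as in the finite-dimensional case handled in Corollary \ref{cor115}) that a graph automorphism and a Chevalley automorphism commute, giving $\sg\tau=\tau\sg$. When $\sg$ has order $2$, one takes $\psi=\id$, exactly as in Corollary \ref{cor115-1}: then $\psi\sg^i(X)=\sg^{-i}X$ trivially on generators, the form is preserved since $\id$ preserves it, and $\psi$ fixes $\fh^{\bar0}$ pointwise. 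When $\sg$ has order $3$, one takes $\psi$ to be a suitable order-$2$ graph automorphism of $\fg$, chosen so that on the Chevalley generators $X_{\b}$ one has the compatibility $\psi\sg^i(X_\b)=\sg^{-i}X_{\b'}$ needed to invoke Lemma \ref{chev3}; this is the affine analogue of the $D_4$ argument given in the proof of Corollary \ref{cor115}.

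The verification then proceeds by checking the five bullets of Definition \ref{chev1}(ii) in turn. The conditions that $\tau$ be a Chevalley automorphism and that $\sg\tau=\tau\sg$ are the inputs. The condition $\psi(\fg^{\bar j})=\fg^{-\bar j}$ follows from Lemma \ref{chev3}, using the generator-level identity $\psi\sg^i(X_\b)=\sg^{-i}X_{\b'}$; one should note that $\psi$ being a diagram/graph automorphism of finite order makes it form-preserving and makes it fix the fixed-point Cartan $\fh^{\bar0}$, which gives the last two bullets. Once $(\tau,\psi)$ is confirmed to be a $\sg$-Chevalley pair, Proposition \ref{pro115} delivers that $\bar\tau_\psi$ is a Chevalley automorphism of $\hat\ll_\rho(\fg,\aa)$, and specializing $\aa$ to Laurent polynomials in one variable identifies $\hat\ll_\rho(\fg,\aa)$ with the elliptic Lie algebra, completing the proof.

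The main obstacle I anticipate is the explicit construction of $\psi$ in the order-$3$ case and the bookkeeping needed to confirm the generator-level compatibility $\psi\sg^i(X_\b)=\sg^{-i}X_{\b'}$ for the affine generators. Unlike the finite-dimensional $D_4$ triality handled in Corollary \ref{cor115}, here $\fg$ is itself infinite-dimensional, so one must be careful about how the affine diagram symmetry interacts with the extra affine node and the derivation, and one must check that the chosen $\psi$ genuinely permutes the generators in a way compatible with $\sg$ rather than merely on the finite part. Concretely, one needs to confirm that the order-$3$ graph automorphisms of affine type (the relevant diagram being the affinization whose finite quotient carries $D_4$ triality) admit a commuting order-$2$ graph automorphism $\psi$ satisfying the required identity on all standard Chevalley generators including those attached to the affine node; this is a finite, type-by-type check but is where the real content lies.
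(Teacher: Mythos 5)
Your overall strategy is the same as the paper's: observe $\sg\tau=\tau\sg$, produce a companion graph automorphism $\psi$ so that $(\tau,\psi)$ is a $\sg$-Chevalley pair (using Lemma \ref{chev3} to verify $\psi(\fg^{\bar j})=\fg^{-\bar j}$ on generators), and then invoke Proposition \ref{pro115}. However, there is a genuine gap in your case analysis. You assert that the nontrivial diagram symmetries of affine Dynkin diagrams have order $2$ or $3$, with the order-$3$ case arising \emph{only} from the triality of $D_4^{(1)}$. This is false. The affine diagram $E_6^{(1)}$ has a central trivalent node with three branches of length two, so it carries an order-$3$ rotational symmetry that is not covered by your argument. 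Moreover, affine diagrams also admit graph automorphisms of order $4$: the diagram of $D_4^{(1)}$ has automorphism group $S_4$ (hence $4$-cycles), and the two-forked diagrams $D_{2\ell+2}^{(1)}$ and $D_{2\ell+3}^{(1)}$ have dihedral symmetry groups of order $8$, which contain order-$4$ elements. The paper enumerates $22$ non-identity graph automorphisms of affine type, of which $17$ have order $2$ (handled by $\psi=\id$ as you do), and then treats the remaining five cases --- order $3$ for $D_4^{(1)}$ and $E_6^{(1)}$, and order $4$ for $D_4^{(1)}$, $D_{2\ell+2}^{(1)}$, $D_{2\ell+3}^{(1)}$ --- each with an explicitly chosen order-$2$ diagram automorphism $\psi$ and an explicit verification of the identity $\psi\sg^k(X)=\sg^{-k}X$ or $\psi\sg^k(X)=\sg^{-k}\psi(X)$ on the standard Chevalley generators.

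Since the entire content of the corollary is precisely this construction of $\psi$ for each $\sg$, omitting the $E_6^{(1)}$ order-$3$ case and all of the order-$4$ cases leaves the proof incomplete: your argument as written establishes the result only for $\sg$ of order $2$ and for the triality of $D_4^{(1)}$. You correctly anticipate that the ``real content'' is the generator-level bookkeeping including the affine node, but you would need to carry it out for the missing diagrams; in particular, the order-$4$ cases require a $\psi$ of order $2$ satisfying $\psi\sg^k=\sg^{-k}\psi$ on all generators for $k=0,1,2,3$, which is not a formal consequence of the order-$2$ or order-$3$ analysis.
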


\proof First, one observes that $\sg\tau=\tau\sg$. There are $22$ non-identity graph automorphisms of affine type, from which $17$ are of order $2$ for them Corollary \ref{cor115-1} applies. Assume next that $m>2$, where $m$ is the order of $\sg$. If $m=3$, then $\sg$ is the automorphism induced by  the order $3$ diagram automorphism of $D_4^{(1)}$ or $E_6^{(1)}$. In what follows, we let $\{h_{\a_i}, E_{\pm\a_i}\}$ be 
the standard Chevalley generators of the affine Lie algebra $\fg$.

{$D_4^{(1)}$: $\sg$ is given by
\begin{center}
\begin{scriptsize}
	\begin{tikzpicture}
		\node[dnode,label=below:$2$] (2) at (0,0) {};
		\node[dnode,label=below:$0$] (0) at (.4,0) {};
		\node[dnode,label=below:$1$] (1) at (-1.2,0) {};
		\node[dnode,label=right:$3$] (3) at (.6,-.8) {};
		\node[dnode,label=right:$4$] (4) at (.6,.8) {};
		
		\path (1) edge[sedge] (2)
		   	  (2) edge[sedge] (3)
				  edge[sedge] (4)
				  edge[sedge] (0);
		
		\draw[->,shorten >=1.5pt,shorten <=1.5pt,thick] (1) -- (3);
		\draw[->,shorten >=1.5pt,shorten <=1.5pt,thick] (3) -- (4);
		\draw[->,shorten >=1.5pt,shorten <=1.5pt,thick] (4) -- (1);
	\end{tikzpicture}
\end{scriptsize}

\end{center}
{We  let $\psi$ be the automorphism induced by order $2$ diagram automorphism:
\begin{center}
\scriptsize{  
	\begin{tikzpicture}
	\node[dnode,label=below:$2$] (2) at (0,0) {};
	\node[dnode,label=below:$0$] (0) at (.4,0) {};
	\node[dnode,label=below:$1$] (1) at (-1.2,0) {};
	\node[dnode,label=right:$3$] (3) at (.6,-.8) {};
	\node[dnode,label=right:$4$] (4) at (.6,.8) {};
    	
    	\path (1) edge[sedge] (2)
          	  (2) edge[sedge] (3)
              	  edge[sedge] (4)
               	  edge[sedge] (0);
    
    	\draw[<->,shorten >=1.5pt,shorten <=1.5pt,thick] (3) -- (4);
  \end{tikzpicture}
  }
\end{center}
Then we simply check that for $k=0,1,2,$
$$\psi\sg^k(X)=\left\{\begin{array}{ll}
\sg^{-k}X&\hbox{if }X\in\{h_{\a_i},E_{\pm\a_i}\mid i=0,1,2\},\\
\sg^{-k}\psi(X)&\hbox{if }X\in\{h_{\a_i},E_{\pm\a_i}\mid i=3,4\}.
\end{array}\right.
$$
Thus by Lemma \ref{chev2},  $(\tau,\psi)$ is $\sg$-Chevalley pair. So $\bar\tau_{\psi}$ is a Chevalley automorphism of $\hat\ll_\rho(\fg,\aa)$.

$E_6^{(1)}$: $\sg$ is given by\begin{center}
\scriptsize{  \begin{tikzpicture}
    \node[dnode,label=right:$4$] (4) at (0,0) {};
    \node[dnode,label=left:$2$] (2) at (-.7,.38) {};
    \node[dnode,label=right:$3$] (3) at (0,-.7) {};
    \node[dnode,label=right:$5$] (5) at (.7,.38) {};
    \node[dnode,label=right:$1$] (1) at (0,-1.4) {};
    \node[dnode,label=left:$0$] (0) at (-1.4,.76) {};
    \node[dnode,label=right:$6$] (6) at (1.4,.76) {};
    
    \path (4) edge[sedge] (2)
              edge[sedge] (3)
              edge[sedge] (5)
          (3) edge[sedge] (1)
          (2) edge[sedge] (0)
          (5) edge[sedge] (6);
          
    \draw[->,shorten >=1.5pt,shorten <=1.5pt,thick] (2) -- (3);
    \draw[->,shorten >=1.5pt,shorten <=1.5pt,thick] (3) -- (5);
    \draw[->,shorten >=1.5pt,shorten <=1.5pt,thick] (5) -- (2);
    \draw[->,shorten >=1.5pt,shorten <=1.5pt,thick] (0) -- (1);
    \draw[->,shorten >=1.5pt,shorten <=1.5pt,thick] (1) -- (6);
    \draw[->,shorten >=1.5pt,shorten <=1.5pt,thick] (6) -- (0);
  \end{tikzpicture}
  }
\end{center}
We take $\psi$ to be the automorphism induced by order $2$ diagram automorphism:
\begin{center}
 \scriptsize{ \begin{tikzpicture}
    \node[dnode,label=right:$4$] (4) at (0,0) {};
	\node[dnode,label=left:$2$] (2) at (-.7,.38) {};
	\node[dnode,label=right:$3$] (3) at (0,-.7) {};
	\node[dnode,label=right:$5$] (5) at (.7,.38) {};
	\node[dnode,label=right:$1$] (1) at (0,-1.4) {};
	\node[dnode,label=left:$0$] (0) at (-1.4,.76) {};
	\node[dnode,label=right:$6$] (6) at (1.4,.76) {};
    
    \path (4) edge[sedge] (2)
              edge[sedge] (3)
              edge[sedge] (5)
          (3) edge[sedge] (1)
          (2) edge[sedge] (0)
          (5) edge[sedge] (6);
          
    \draw[<->,shorten >=1.5pt,shorten <=1.5pt,thick] (3) -- (5);
    \draw[<->,shorten >=1.5pt,shorten <=1.5pt,thick] (1) -- (6);
  \end{tikzpicture}
  }
\end{center}
Then one directly checks that for $k=0,1,2,$
$$
\psi\sg^k(X)=\left\{\begin{array}{ll}
\sg^{-k}X&\hbox{if }X\in\{h_{\a_i},E_{\pm\a_i}\mid i=0,2,4\},\\ 
\sg^{-k}\psi(X)&\hbox{if }X\in\{h_{\a_i},E_{\pm\a_i}\mid i=1,3,5,6\}.
\end{array}\right.
$$
Thus by Lemma \ref{chev2},  $(\tau,\psi)$ is $\sg$-Chevalley pair. Hence $\bar\tau_{\psi}$ is a Chevalley automorphism of $\hat\ll_\rho(\fg,\aa)$.

Assume next that $m=4$. Then $\sg$ is the automorphism induced by one of diagram automorphisms  $D_4^{(1)}$, $D_{2\ell+2}^{(1)}$, or $D_{2\ell+1}^{(1)}$, $\ell>1$. In what follows for each case we introduce the appropriate $\psi$ and show that it satisfies conditions of Lemma \ref{chev3}, with respect to the standard Chevalley generators $\{h_{\a_i}, E_{\pm\a_i}\}$ of the affine Lie algebra $\fg$:

$D_4^{(1)}$:

%Let $\sg$ be the order $4$ automorphism of $D_4^{(1)}$ induced by Dynkin diagram automorphism:
\begin{center}
\begin{tikzpicture}
    \draw (-1.7,0) node[anchor=east]  {$\sigma:$};
    
    \scriptsize{
    \node[dnode,label=below:$2$] (2) at (0,0) {};
    \node[dnode,label=left:$0$] (0) at (-.7,-.7) {};
    \node[dnode,label=right:$1$] (1) at (.7,.7) {};
    \node[dnode,label=left:$3$] (3) at (-.7,.7) {};
    \node[dnode,label=right:$4$] (4) at (.7,-.7) {};
    
    \path (2) edge[sedge] (0)
              edge[sedge] (1)
              edge[sedge] (3)
              edge[sedge] (4);
              
    \draw[->,shorten >=1.5pt,shorten <=1.5pt,thick] (1) -- (4);
    \draw[->,shorten >=1.5pt,shorten <=1.5pt,thick] (4) -- (0);
    \draw[->,shorten >=1.5pt,shorten <=1.5pt,thick] (0) -- (3);
    \draw[->,shorten >=1.5pt,shorten <=1.5pt,thick] (3) -- (1);}
\end{tikzpicture}
$\quadd\quadd$
\begin{tikzpicture}
	\draw (-1.7,0) node[anchor=east]  {$\psi:$};
	
	\scriptsize{
	\node[dnode,label=below:$2$] (2) at (0,0) {};
	\node[dnode,label=left:$0$] (0) at (-.7,-.7) {};
	\node[dnode,label=right:$1$] (1) at (.7,.7) {};
	\node[dnode,label=left:$3$] (3) at (-.7,.7) {};
	\node[dnode,label=right:$4$] (4) at (.7,-.7) {};
    
    \path (2) edge[sedge] (0)
              edge[sedge] (1)
              edge[sedge] (3)
              edge[sedge] (4);
              
    \draw[<->,shorten >=1.5pt,shorten <=1.5pt,thick] (3) edge[bend left=40] (4);}
\end{tikzpicture}
\end{center}

$$\psi\sg^k(X)=\left\{\begin{array}{ll}
\sg^{-k}X&\hbox{if }X\in\{h_{\a_i},E_{\pm\a_i}\mid i=0,1,2\},\\
\sg^{-k}\psi(X)&\hbox{if }X\in\{h_{\a_i},E_{\pm\a_i}\mid i=3,4\},
\end{array}\right.
\quad(k=0,1,2,3).
$$
%Thus by Lemma \ref{chev2},  $(\tau,\psi)$ is $\sg$-Chevalley pair. Hence $\bar\tau_{\psi}$ is a Chevalley automorphism of $\hat\ll_\rho(\fg,\aa)$.

%$D_{l}^{(1)}, l\geq 5$:
$D_{2l+2}^{(1)}$:
% let $\sg$ be the order $4$ automorphism induced by Dynkin diagram automorphism:
\begin{center}  
\begin{tikzpicture}
	\draw (-3.7,0) node[anchor=east]  {$\sigma:$};
	
	\scriptsize{
    \node[dnode] (A) at (-.7,-.7) {};
    \node[dnode] (B) at (.7,.7) {};
    \node[dnode] (C) at (-.7,.7) {};
    \node[dnode] (D) at (.7,-.7) {};
    \node[dnode] (E) at (1.4,.7) {};
    \node[dnode] (F) at (1.4,-.7) {};
    \node[dnode,label=right:$l+1$] (l+1) at (2.1,0) {};
    \node[dnode,label=above:$2$] (2) at (-1.4,.7) {};
    \node[dnode,label=below:$2l$] (2l) at (-1.4,-.7) {};
    \node[dnode,label=above:$0$] (0) at (-2.1,1.05) {};
    \node[dnode,label=above:$1$] (1) at (-2.1,.35) {};
    \node[dnode,label=below:$2l+1$] (2l+1) at (-2.1,-.35) {};
    \node[dnode,label=below:$2l+2$] (2l+2) at (-2.1,-1.05) {};
    
    \path (2) edge[sedge] (0)
              edge[sedge] (1)
              edge[sedge] (C)
         (2l) edge[sedge] (A)
              edge[sedge] (2l+1)
              edge[sedge] (2l+2)
          (B) edge[sedge] (E)
          (D) edge[sedge] (F)
        (l+1) edge[sedge] (E)
              edge[sedge] (F);
              
    \draw[<->,shorten >=1.5pt,shorten <=1.5pt,thick] (2) -- (2l);
    \draw[<->,shorten >=1.5pt,shorten <=1.5pt,thick] (A) -- (C);
    \draw[<->,shorten >=1.5pt,shorten <=1.5pt,thick] (B) -- (D);
    \draw[<->,shorten >=1.5pt,shorten <=1.5pt,thick] (E) -- (F);
    \draw[dashed] (C) -- (B);
    \draw[dashed] (A) -- (D);
    \draw[->,shorten >=1.5pt,shorten <=1.5pt,thick] (2l+2) edge[bend left=70] (0);
    \draw[->,shorten >=1.5pt,shorten <=1.5pt,thick] (2l+1) edge[bend left=70] (1);
    \draw[->,shorten >=1.5pt,shorten <=1.5pt,thick] (0) edge[bend left=70] (2l+1);
    \draw[->,shorten >=1.5pt,shorten <=1.5pt,thick] (1) edge[bend left=70] (2l+2);}
\end{tikzpicture}
\end{center}
%and $\psi$ be the automorphism induced by order $2$ diagram automorphism:
\begin{center}
\begin{tikzpicture}
	\draw (-3.2,0) node[anchor=east]  {$\psi:$};
	
    \scriptsize{ 
    \node[dnode] (B) at (.7,0) {};
    \node[dnode] (C) at (-.7,0) {};
    \node[dnode] (E) at (1.4,0) {};
    \node[dnode,label=below:$2$] (2) at (-1.4,0) {};
    \node[dnode,label=below:$2l$] (2l) at (1.4,0) {};
    \node[dnode,label=above:$0$] (0) at (-2.1,.35) {};
    \node[dnode,label=below:$1$] (1) at (-2.1,-.35) {};
    \node[dnode,label=above:$2l+1$] (2l+1) at (2.1,.35) {};
    \node[dnode,label=below:$2l+2$] (2l+2) at (2.1,-.35) {};
    
    \path (2) edge[sedge] (0)
              edge[sedge] (1)
              edge[sedge] (C)
         (2l) edge[sedge] (E)
              edge[sedge] (2l+1)
              edge[sedge] (2l+2)
          (B) edge[sedge] (E);
              
    \draw[dashed] (C) -- (B);
    \draw[<->,shorten >=1.5pt,shorten <=1.5pt,thick] (2l+2) -- (2l+1);}
\end{tikzpicture}
\end{center}
$$\psi\sg^k(X)=\sg^{-k}\psi(X),\quad(k=0,1,2,3).$$

$D_{2l+3}^{(1)}$:
 % let $\sg$ be the order $4$ automorphism induced by Dynkin diagram automorphism:
\begin{center}
\begin{tikzpicture}
    \draw (-3.7,0) node[anchor=east]  {$\sigma:$};
    
    \scriptsize{  
    \node[dnode] (A) at (-.7,-.7) {};
    \node[dnode] (B) at (.7,.7) {};
    \node[dnode] (C) at (-.7,.7) {};
    \node[dnode] (D) at (.7,-.7) {};
    \node[dnode,label=above:$l+1$] (l+1) at (1.4,.7) {};
    \node[dnode,label=below:$l+2$] (l+2) at (1.4,-.7) {};
    \node[dnode,label=above:$2$] (2) at (-1.4,.7) {};
    \node[dnode,label=below:$2l+1$] (2l+1) at (-1.4,-.7) {};
    \node[dnode,label=above:$0$] (0) at (-2.1,1.05) {};
    \node[dnode,label=above:$1$] (1) at (-2.1,.35) {};
    \node[dnode,label=below:$2l+2$] (2l+2) at (-2.1,-.35) {};
    \node[dnode,label=below:$2l+3$] (2l+3) at (-2.1,-1.05) {};
    
    \path (2) edge[sedge] (0)
              edge[sedge] (1)
              edge[sedge] (C)
       (2l+1) edge[sedge] (A)
              edge[sedge] (2l+2)
              edge[sedge] (2l+3)
          (B) edge[sedge] (l+1)
          (D) edge[sedge] (l+2)
          (l+1) edge[sedge] (l+2);
              
    \draw[<->,shorten >=1.5pt,shorten <=1.5pt,thick] (2) -- (2l+1);
    \draw[<->,shorten >=1.5pt,shorten <=1.5pt,thick] (A) -- (C);
    \draw[<->,shorten >=1.5pt,shorten <=1.5pt,thick] (B) -- (D);
    \draw[<->,shorten >=1.5pt,shorten <=1.5pt,thick] (l+1) edge[bend left=70] (l+2);
    \draw[dashed] (C) -- (B);
    \draw[dashed] (A) -- (D);
    \draw[->,shorten >=1.5pt,shorten <=1.5pt,thick] (2l+3) edge[bend left=70] (0);
    \draw[->,shorten >=1.5pt,shorten <=1.5pt,thick] (2l+2) edge[bend left=70] (1);
    \draw[->,shorten >=1.5pt,shorten <=1.5pt,thick] (0) edge[bend left=70] (2l+2);
    \draw[->,shorten >=1.5pt,shorten <=1.5pt,thick] (1) edge[bend left=70] (2l+3);}
\end{tikzpicture}
\end{center}
%and $\psi$ be the automorphism induced by order $2$ diagram automorphism:
\begin{center}
\begin{tikzpicture}
	\draw (-3.2,0) node[anchor=east]  {$\psi:$};
	
	\scriptsize{ 
    \node[dnode] (B) at (.7,0) {};
    \node[dnode] (C) at (-.7,0) {};
    \node[dnode] (E) at (1.4,0) {};
    \node[dnode,label=below:$2$] (2) at (-1.4,0) {};
    \node[dnode,label=below:$2l+1$] (2l+1) at (1.4,0) {};
    \node[dnode,label=above:$0$] (0) at (-2.1,.35) {};
    \node[dnode,label=below:$1$] (1) at (-2.1,-.35) {};
    \node[dnode,label=above:$2l+2$] (2l+2) at (2.1,.35) {};
    \node[dnode,label=below:$2l+3$] (2l+3) at (2.1,-.35) {};
    
    \path (2) edge[sedge] (0)
              edge[sedge] (1)
              edge[sedge] (C)
         (2l+1) edge[sedge] (E)
              edge[sedge] (2l+2)
              edge[sedge] (2l+3)
          (B) edge[sedge] (E);
              
    \draw[dashed] (C) -- (B);
    \draw[<->,shorten >=1.5pt,shorten <=1.5pt,thick] (2l+3) -- (2l+2);}
\end{tikzpicture}
\end{center}
$$\psi\sg^k(X)=\sg^{-k}\psi(X),\quad (k=0,1,2,3).$$
%Now if $X$ is  a standard  Chevalley generator for $\fg$, then one directly checks that $\psi\sg^k(X)=\sg^{-k}\psi(X)$, for $k=0,1,2,3$. 
Considering the above facts, $(\tau,\psi)$ is $\sg$-Chevalley pair (Lemma \ref{chev2}) and so $\bar\tau_{\psi}$ is a Chevalley automorphism of $\hat\ll_\rho(\fg,\aa)$.\qed

\begin{exa}\label{ex3}
	Let $\nu\geq 1$ and 
$\aa=\sum_{\underline{n}\in\bbbz^\nu}\bbbc x_1^{n_1}\cdots x_{\nu}^{n_\nu},$ $\underline{n}=(n_1\ldots,n_\nu)$,
be the algebra of Laurent polynomials in $\nu$ variables.
	Suppose $\Lam=\bbbz^\nu$, $m\geq 1$ and $\tau_{1},\ldots,\tau_{m}$
	represent distinct cosets of $2\Lam$ in $\Lam$ with $\tau_1=0$.
%	\begin{equation}\label{r7}
%	\tau_{1}=0\andd \tau_{i}\in Z_{\mathbf{e},\mathbf{q}},\;\;
%	i=1,\ldots,m.
%	\end{equation}
	
	Next let $l\geq 1$ and put
	$$
	F=\left[\begin{array}{lll}
	x^{\tau_{1}}&\cdots&0\\
	\vdots&\ddots&\vdots\\
	0&\cdots&x^{\tau_{m}}\end{array}\right]\andd
	K=\left[\begin{array}{ccc}
	0&I_{\ell}&0\\
	I_{\ell}&0&0\\
	0&0&F\end{array}\right].
	$$
	{Then $F$ is an  invertible $m\times m$-matrix and $K$ is an
	invertible $n\times n$-matrix, where $n=2\ell+m$. Set
	$\dot\fg=\sl{sl}_n (\bbbc)\otimes\aa$ which we identify it with 
	${\sl{sl}_n} (\aa):=\{X\in M_{n}(\aa)\mid\hbox{tr}(X)=0\}$, the subalgebra of the Lie algebra $\sl{gl}_n(\aa)$ with underlying space $M_n(\aa)$ and the commutator product. 
	We define the ${\bbbc}-$linear
map $\ep:\aa\longrightarrow{\bbbc}$ induced by
$\ep(x^{\sg})=1$ if $\sg=0$ and $\ep(x^{\sg})=0$ if $\sg\not=0$.
%Then the form $\ep(a,b)=\ep(ab)$ is a non-degenerate
%symmetric bilinear form on $\aa$.
%We have
%%\begin{equation}\label{r1}
% $\ep(tr(AB))=\ep(tr(BA))$,
%%\end{equation}
%for $A,B\in M_{n}(\aa)$. 
This provides a non-degenerate associative symmetric bilinear form $\fm$ on $\sl{gl}_{n}(\aa)$ defined by
\begin{equation}\label{r2}
(A,B)=\ep(tr(AB)).
\end{equation}
 Put
$\dot\fh=\span_{\bbbc}\{e_{ii}-e_{i+1,i+1}\mid 1\leq i\leq\ell\}$. Next we set
$$\fg=\dot\fg\oplus\v\oplus\v^\star,\andd\fh=\dot\fh\oplus\v\oplus
\v^\star,$$
where $\v:=\Lam\otimes\bbbc$. Then exactly as in (\ref{bracketloop}) and (\ref{formloop}), we extend the bracket and the form on $\dot\fg$ to $\fg$. It then follows that
$(\fg,\fm,\fh)$ is an extend affine Lie algebra of type $A_{\ell-1}$ (see \cite[Example III.1.29]{AABGP97}).}

{We now consider endomorphisms $\sg$ and $\tau$ of $\fg$ given by
$$\begin{array}{l}
\sg(X)=-K^{-1}{X}^{t}K,\quad\sg_{|_{\v\oplus\v^\star}}=\id\\
\tau(x^\lam E_{p,q})=-x^{-\lam}E_{q,p},\quad
\tau_{|_{\v\oplus\v^\star}}=-id,
\end{array}
$$
where $\{E_{p,q}\mid 1\leq p,q\leq n\}$ denotes the set of matrix units, and $\lam\in\Lam$.
It is straightforward to check that both $\sg$ and $\tau$ are in fact Lie algebra automorphisms of order $2$, with $\tau_{_\fh}=-\id$. Moreover, one checks
directly that they commute. 
 Thus by Corollary \ref{cor115-1}, $\bar\tau$ is a Chevalley automorphism of $\hat\fg$.}
We note that the type $X$ of $\hat\fg$ is given by
$$X=\left\{\begin{array}{ll}
A_{1}&\hbox{if }l=1,\\
B_{\ell}&\hbox{if }l\geq 2.
%BC_{\ell}&\hbox{if }\hbox{$\be\not={\bf 1_{\nu}}$ or
%$\bq\not={\bf 1_{\nu\times \nu}}$}.
\end{array}\right.
$$%\end{equation}
\end{exa}

We conclude this section with a result concerning  diagonal automorphisms of an extended affine Lie algebra, namely an automorphism
of the form $\sg(x_\a)=\Phi(\a)x_\a$ for $x_\a\in\ll_\a$, $\a\in R$, where $\Phi:\la R\ra\rightarrow\bbbc^\times$ is a group homomorphism.

\begin{lem}\label{lemdia}
Assume that $\sg$ is a non-identity finite order {\it diagonal} automorphism of an extended affine  Lie algebra $\fg$ and $\tau$ is a Chevalley automorphism.Then
 
(i) $\sg\tau_{|_\fh}=\tau\sg_{|_\fh}=-\id_{_\fh}$, in particular
 $\sg\tau$ is a Chevalley automorphism.
 
(ii) $\sg\tau=\tau\sg$ if and only if $\sg$ has order $2$; in which case
 $(\tau,\id)$ is a Chevalley pair.
 \end{lem}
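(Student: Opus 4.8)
The plan is to exploit the defining properties of the two automorphisms on the Cartan subalgebra $\fh$ and then propagate the computation to all root spaces via the root space decomposition. Recall that a diagonal automorphism $\sg$ acts by $\sg(x_\a)=\Phi(\a)x_\a$ for $x_\a\in\fg_\a$, where $\Phi:\la R\ra\to\bbbc^\times$ is a group homomorphism; in particular $\sg$ fixes $\fh=\fg_0$ pointwise since $\Phi(0)=1$. On the other hand, a Chevalley automorphism $\tau$ satisfies $\tau_{|_\fh}=-\id$. For part (i), I would simply observe that on $\fh$ we have $\sg\tau_{|_\fh}=\sg\circ(-\id)=-\id$ (as $\sg$ fixes $\fh$) and likewise $\tau\sg_{|_\fh}=\tau\circ\id_{|_\fh}=-\id$. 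Since $\sg\tau$ is a composition of two finite-order automorphisms that commute on $\fh$ (both act as scalars there), $\sg\tau$ has finite order, and restricts to $-\id$ on $\fh$; hence $\sg\tau$ satisfies both bullet conditions of Definition \ref{chev1}(i) and is a Chevalley automorphism.

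For part (ii), the key is to compute $\sg\tau$ and $\tau\sg$ on an arbitrary root vector $x_\a\in\fg_\a$, $\a\in R$. By Lemma \ref{chev4}(i) we have $\tau(x_\a)=x_{-\a}\in\fg_{-\a}$ for some root vector $x_{-\a}$ (using $\dim\fg_\a=1$ for $\a\in R^\times$; for the isotropic and zero cases one argues analogously on the relevant weight spaces). Applying $\sg$ afterward gives $\sg\tau(x_\a)=\Phi(-\a)\,x_{-\a}$, whereas applying the maps in the other order yields $\tau\sg(x_\a)=\tau(\Phi(\a)x_\a)=\Phi(\a)\,x_{-\a}$. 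Thus $\sg\tau=\tau\sg$ on $\fg_\a$ precisely when $\Phi(\a)=\Phi(-\a)=\Phi(\a)^{-1}$, i.e.\ $\Phi(\a)^2=1$, for every $\a\in R$. Since $R$ generates $\la R\ra$ as a group and $\Phi$ is a homomorphism, this is equivalent to $\Phi^2\equiv 1$ on all of $\la R\ra$, which says exactly that $\sg^2=\id$. As $\sg$ is assumed non-identity, this means $\sg$ has order $2$.

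I would then close the equivalence by noting that when $\sg$ has order $2$ the pair $(\tau,\id)$ trivially satisfies the $\sg$-Chevalley pair axioms of Definition \ref{chev1}(ii): $\tau$ is a Chevalley automorphism by hypothesis, $\sg\tau=\tau\sg$ was just established, and for $\psi=\id$ the conditions $\psi(\fg^{\bar j})=\fg^{-\bar j}$ (which for order $2$ reads $\fg^{\bar j}=\fg^{-\bar j}$, automatic since $\bar j=-\bar j$ in $\bbbz_2$), $\psi$ preserves the form, and $\psi_{|_{\fh^{\bar 0}}}=\id$ all hold vacuously for the identity map. The main (and only mild) obstacle is handling the verification $\Phi(\a)=\Phi(-\a)$ uniformly across $R^\times$, $R^0$, and the Cartan; this is dispatched by the observation that $\Phi(-\a)=\Phi(\a)^{-1}$ always holds since $\Phi$ is a group homomorphism, so the commutation condition collapses cleanly to $\Phi(\a)^2=1$ on the generating set $R$, giving the stated order-$2$ criterion with no case analysis of substance.
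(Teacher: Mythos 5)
Your proof is correct and follows essentially the same route as the paper's: part (i) from $\sg_{|_\fh}=\id_\fh$ together with $\tau_{|_\fh}=-\id_\fh$, and part (ii) by comparing $\sg\tau(x_\a)=\Phi(-\a)\tau(x_\a)$ with $\tau\sg(x_\a)=\Phi(\a)\tau(x_\a)$ and reducing commutation to $\Phi(\a)^2=1$ on the generating set $R$, exactly as the paper does with its homomorphism $\Psi$. The one weak point is your justification that $\sg\tau$ has finite order: agreeing (or commuting) on $\fh$ does not by itself yield this, since two finite-order automorphisms can compose to an infinite-order map; the clean argument here is that $(\sg\tau)^2=\tau^2$ on every root space because $\Phi(\a)\Phi(-\a)=\Phi(0)=1$, so $\sg\tau$ has order dividing $2\,\mathrm{ord}(\tau)$. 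Your explicit verification of the $\sg$-Chevalley pair axioms for $(\tau,\id)$ is a welcome addition that the paper leaves implicit.
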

 
 \proof  Part (i) is clear as $\sg_{|_\fh}=\id_\fh$. 
 
(ii) Assume that $\sg$ is given by the group homomorphism
 $\Psi:\la R\ra\rightarrow \bbbc^\times$. 
 %To see this let $\dot R$ be the root system of $\fg$ with respect to a Cartan subalgebra $\dot\fh$ of $\fg$. Then we have $\sg_{|_{\dot\fh}}=\id$ and for a root vector $x_\a$, $\sg(x_\a)=\Psi(\a)x_\a$ for a group
%homomorphism $\Psi:\dot\bbbq\rightarrow \bbbc^\times$, where $\dot\bbbq$ is the root lattice of $\dot R$. 
If $\sg$ has order $2$, then
$\Psi(\a)=\Psi(-\a)$ for each $\alpha$ and so $\sg\tau=\tau\sg$.
If $\sg$ has order $>2$, then there exists a root $\a$ such that
$\Psi(\a)\not=\Psi(-\a).$ Then for $0\not=x_\a\in\fg_\a$, $\sg\tau(x_\a)=\Psi(-\a)\tau(x_\a)\not=\Psi(\a)\tau(x_\a)=\tau\sg(x_\a).$\qed

\section{Preliminary results on integral structure}\label{Preliminary results on integral structure}\setcounter{equation}{0}
We  use our knowledge on $\bbbz$-forms of  finite and affine Lie algebras to provide some preliminary  results needed for Section \ref{Integral structure of the core}, in which we equip with an integral structure the core of an extended affine Lie algebra admitting a Chevalley automorphism.

Throughout this section, we assume that $(\fg,\fm,\fh)$ is a tame reduced 
extended affine Lie algebra with root system $R$. 
{We also assume that $\fg$ is equipped with a Chevalley automorphism
$\tau$.}
We recall from (\ref{eq99-1}) that 
\begin{equation}\label{sss}
R=(S+S)\cap(\rd_{sh}+S)\cup (\rd_{lg}+L),
\end{equation}
where $S$ and $L$ are two semilattices and $\rd$ is a finite root system.
We fix a base $\dot\Pi$ of $\rd$. Also we consider two subsets
$R^{\pm}$ of $R^\times$, such that
\begin{equation}\label{eq2}
 R^\times=R^+\uplus R^-\quad\hbox{and}\quad
R^+=-R^-.
\end{equation}
We call such a decomposition, a  {\it positivity} decomposition
for $R^\times$.

\begin{rem}\label{remeq2}
The existence of a positivity decomposition can be guaranteed in several ways. We present two of them here. Let $\rd^\times=\rd^+\uplus\rd^-$ be the decomposition of $\rd$ into positive and negative roots with respect to a base of $\rd$, and set
$$R^+ := (\rd^+ +\Lam)\cap R^\times\andd
R^- :=(\rd^- +\Lam)\cap R^\times.
$$
Then the sets $R^\pm$ provide a positivity decomposition for $R^\times.$

The second way which we present here is motivated from
I. G. Macdonald's approach  (see \cite{Mac72})
 in decomposing an affine root system into positive and negative roots. We again fix a decomposition
$\rd^\times=\rd^+\uplus\rd^-$ as above.
For $\a\in\rd^\times$, let 
 $$R^\times_\a:= ((\pm\a+\Lam)\cap R^\times)\andd
 R_\a:=R^\times\cup((R_\a^\times- R_\a^\times)\cap R^0).$$
 By \cite[Lemma 2.1]{APari21},  $R_\a$ is an extended affine root system of type $A_1$.
 In \cite{AK19},  similar to I. G. Macdonald, the authors obtain a decomposition $R_\a^\times=R_\a^+\cup R_\a^-$,
 where $R^+_\a=-R^-_\a$. Note that if $\a,\b\in \rd^+$ with
 $\a\not=\b$, then $R_\a^\times\cap R^\times_\b=\emptyset$.
 We set 
 $$R^+:=\cup_{\a\in\rd^+}R^+_\a\andd R^-:=\cup_{\a\in\rd^-}R^-_\a.$$
 Then as $R^\times={\cup_{\a\in\rd^+}R^\times_{\a}}$, we get
 \begin{equation}\label{temp123}
 R^\times=R^+\uplus R^-\quad\hbox{with}\quad R^+=-R^-.
 \end{equation}
 \end{rem}

We next recall the notion of a nilpotent pair for $R$ introduced in \{\cite[\S 3]{AP19}}.
\begin{DEF}\label{nil}
A pair $\{\a,\b\}$ of roots in $R^\times$ is called 
a {\it nilpotent} pair if $\a+\b\in R^\times$.
\end{DEF}

\begin{lem}\label{hump1}\label{lemnew1} 
%Let $(\fg,\fm,\fh)$ be an extended affine root system of reduced type. 
Let $\{\a,\b\}$ be a nilpotent pair in $R$. 
Let $u$ and $d$ be the up and down non-negative integers  
appearing in the $\a$-string through $\b$. Then
$d+1 = u(\a+\b,\a+\b)/(\b,\b)$.
\end{lem}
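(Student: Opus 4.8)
The plan is to reduce the claim to the representation theory of the $\mathfrak{sl}_2$ attached to $\a$, and then to the elementary geometry of the rank $\le 2$ root subsystem carrying $\a$ and $\b$. First I would record that every root of the $\a$-string through $\b$ is non-isotropic. Since $\a+\b\in R^\times$, the images $\bar\a,\bar\b\in\bar R$ are non-zero and linearly independent: were they proportional we would have $\bar\b=\pm\bar\a$, forcing $\overline{\a+\b}=2\bar\a\notin\bar R$ (as $\bar R$ is reduced) or $\overline{\a+\b}=0$, both impossible. Hence $\bar\b+j\bar\a\neq0$ for all $j\in\bbbz$, so no $\b+j\a$ is isotropic and, by Lemma \ref{oldlem}(i), each $\fg_{\b+j\a}$ is at most one-dimensional.

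Next I would form an $\mathfrak{sl}_2$-triple $\{e,h,f\}$ with $e\in\fg_\a$, $f\in\fg_{-\a}$, $h=2t_\a/(\a,\a)$, and set $M:=\bigoplus_{j\in\bbbz}\fg_{\b+j\a}$. By axiom (A3) the operators $\ad e$ and $\ad f$ are locally nilpotent, so $M$ is a finite-dimensional integrable $\mathfrak{sl}_2$-module (only finitely many $\b+j\a$ are roots, since the string injects into the finite system $\bar R$) whose weight spaces are at most one-dimensional and all of whose $h$-weights $(\b,\a^\vee)+2j$ share the parity of $(\b,\a^\vee)$. A direct sum of irreducibles with these two features must reduce to a single irreducible; thus the string is unbroken, say $\{\b+j\a:-d\le j\le u\}$, and comparing its extreme weights gives the standard relation
\[
d-u=(\b,\a^\vee).
\]

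Finally I would recast the target identity. Writing $2(\a,\b)=(\b,\a^\vee)(\a,\a)=(d-u)(\a,\a)$ one gets $(\a+\b,\a+\b)=(1+d-u)(\a,\a)+(\b,\b)$, and a one-line manipulation shows that $d+1=u(\a+\b,\a+\b)/(\b,\b)$ is equivalent to
\[
(d+1-u)\big[(\b,\b)-u(\a,\a)\big]=0 .
\]
If $(\b,\a^\vee)=-1$ the first factor vanishes, so assume otherwise. Projecting the string into $\bar R$ embeds it into the $\bar\a$-string through $\bar\b$ inside the finite subsystem generated by $\bar\a,\bar\b$; since that subsystem has rank $\le 2$, its strings contain at most four roots, whence $d+u\le 3$. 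Combined with $u\ge 1$ and $d=u+(\b,\a^\vee)\ge 0$, this forces $(\b,\a^\vee)\in\{-3,-2,0,1\}$ and pins down $u$ uniquely in each case, namely $u=3,2,1,1$ respectively. A short length-ratio computation, using that $(\b,\a^\vee)(\a,\b^\vee)=4(\a,\b)^2/\big((\a,\a)(\b,\b)\big)\le 3$ and that the ratio of squared lengths of two roots of $\bar R$ lies in $\{1,2,3,\tfrac12,\tfrac13\}$, then yields $(\b,\b)=u(\a,\a)$ in each case (for $(\b,\a^\vee)\in\{0,1\}$ this is the statement that $\a$ and $\b$ have equal length, read off from the admissible length of $\a+\b$ or $\b-2\a$). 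Hence the second factor vanishes and the identity holds.

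I expect the main obstacle to be the $\mathfrak{sl}_2$ step, i.e.\ proving that the string is unbroken and corresponds to a single irreducible module, where axiom (A3) and the one-dimensionality of non-isotropic root spaces (Lemma \ref{oldlem}(i)) are both essential, together with the careful transfer, via the projection to $\bar R$, of the finite-root-system length bound so that $u$ is determined unambiguously from $(\b,\a^\vee)$.
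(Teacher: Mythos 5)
Your proof is correct, but it takes a genuinely different route from the paper's. The paper disposes of the lemma in two lines: it invokes \cite[Lemma 3.5]{AP19} to assert that $R_{\a,\b}:=R\cap(\bbbz\a\oplus\bbbz\b)$ is an irreducible \emph{reduced finite} root subsystem of $R$, and then quotes the classical identity \cite[Proposition 25.1]{Hum80} for that finite system. You instead re-derive the needed finite-dimensional facts from scratch inside the EALA: non-isotropy of the whole $\a$-string (via linear independence of $\bar\a,\bar\b$ in the reduced system $\bar R$), one-dimensionality of the string's root spaces, unbrokenness and $d-u=(\b,\a^\vee)$ via the $\mathfrak{sl}_2$-module $\bigoplus_j\fg_{\b+j\a}$ using axiom (A3), and finally the equivalence of the target identity with $(d+1-u)\big[(\b,\b)-u(\a,\a)\big]=0$ followed by a case check on $(\b,\a^\vee)\in\{-3,-2,0,1\}$ using the length bound on strings in the rank $\le 2$ image in $\bar R$. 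What the paper's route buys is brevity, at the cost of the nontrivial external input that $R_{\a,\b}$ really is a finite root system (not obvious a priori, since $R$ contains isotropic roots); what your route buys is self-containedness and an explicit explanation of \emph{why} the identity holds (either the string is symmetric enough that $d+1=u$, or $\b$ is exactly $u$ times as long as $\a$). Your case analysis is complete and the two spots that need extra care --- the orthogonal case $(\b,\a^\vee)=0$ and the case $(\b,\a^\vee)=1$, where the ratio of lengths is not determined by integrality alone and must be read off from the admissible length of $\a+\b$ or $\b-2\a$ --- are correctly flagged and correctly resolved.
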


\proof By \cite[Lemma 3.5]{AP19}, $R_{\a,\b}:=R\cap(\bbbz\a\oplus\bbbz\b)$ is an irreducible reduced finite subsystem of $R$. Now the result follows from \cite[Proposition 25.1]{Hum80}.\qed

%\begin{thm}\label{thmnew}
%(Macdonald Positivity) Let $R$ be an extended affine root system of reduced type. Then $R^\times$ can be decomposed as $R^\times=R^+\vplus R^-$ with
%$R^+=-R^-$.
%\end{thm}

\begin{pro}\label{hump3}
 Let $(\fg,\fm,\fh)$ be an extended affine Lie algebra of reduced type,
 admitting a Chevalley automorphism $\tau$.
Then for each $\a\in R^\times$, one can choose {root vectors 
$x_{\pm\a}\in\fg_{\pm\a}$} satisfying the following:

(i)  $[x_\a,x_{-\a}]=h_\a,$
where $h_\a:=2t_\a/(\a,\a)$ (see Lemma \ref{oldlem}).

(ii) If $\{\a,\b\}$ is a nilpotent pair in $R$, then 
$[x_\a,x_\b]=c_{\a,\b}x_{\a+\b}$, with
$c_{\a,\b}=-c_{-\a,-\b}$ and
$c_{\a,\b}^2=u(d+1)(\a+\b,\a+\b)/(\b,\b)$,
where $u$ and $d$ are the up and down non-negative integers  
appearing in the $\a$-string through $\b$.  In particular,
$c_{\a,\b}=\pm(d+1)\in\bbbz$, and $\{c_{\a,\b}\mid\{\a,\b\}\hbox{ a nilpotent pair in }R\}$ is a finite set.

\end{pro}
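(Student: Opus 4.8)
The plan is to manufacture the root vectors directly from the Chevalley automorphism $\tau$ together with a positivity decomposition, and then to extract both the normalization (i) and the structure-constant data (ii) from the local $\mathfrak{sl}_2$-theory combined with invariance of the form.

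First I would fix a positivity decomposition $R^\times = R^+\uplus R^-$ as in Remark \ref{remeq2}. For $\a\in R^+$, invariance of $\fm$ forces the root spaces to be mutually orthogonal except for a non-degenerate pairing $\fg_\a\times\fg_{-\a}$ of one-dimensional spaces (Lemma \ref{oldlem}(i)); hence for any nonzero $y\in\fg_\a$ we have $(y,\tau(y))\neq 0$, since $\tau(y)$ is a nonzero element of $\fg_{-\a}$ by Lemma \ref{chev4}(i). As $\bbbc$ is algebraically closed, rescaling $y$ lets me produce $x_\a$ with $(x_\a,\tau(x_\a)) = -2/(\a,\a)$, and I set $x_{-\a}:=-\tau(x_\a)$. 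Then Lemma \ref{oldlem}(ii) gives $[x_\a,x_{-\a}] = -(x_\a,\tau(x_\a))t_\a = h_\a$, which is (i); the opposite ordering follows from $h_{-\a}=-h_\a$.

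The essential point for (ii) is to promote the relation $\tau(x_\a)=-x_{-\a}$, which holds by construction for $\a\in R^+$, to all of $R^\times$. Here one must be careful, because $\tau$ is only known to have even order (Lemma \ref{chev4}(iii)), so a priori $\tau^2$ could act nontrivially on root spaces. The resolution is local: for each $\a\in R^\times$ the subspace $\fg_\a\oplus\bbbc h_\a\oplus\fg_{-\a}$ is a $\tau$-stable subalgebra isomorphic to $\mathfrak{sl}_2$ (using Lemma \ref{oldlem} and Lemma \ref{chev4}(i)), and $\tau$ restricts to an automorphism of it acting as $-\id$ on its Cartan; a direct check in $\mathfrak{sl}_2$ shows that any such automorphism is an involution, whence $\tau^2(x_\a)=x_\a$. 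Therefore $\tau(x_{-\a})=-\tau^2(x_\a)=-x_\a$, so $\tau(x_\gamma)=-x_{-\gamma}$ holds for every $\gamma\in R^\times$. Applying $\tau$ to $[x_\a,x_\b]=c_{\a,\b}x_{\a+\b}$ and using that $\tau$ is a form-preserving automorphism then yields $c_{-\a,-\b}=-c_{\a,\b}$ for every nilpotent pair, irrespective of the signs of $\a$ and $\b$.

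For the quadratic formula I would localize to the finite-dimensional subalgebra attached to $R_{\a,\b}=R\cap(\bbbz\a\oplus\bbbz\b)$, which is an irreducible reduced finite root system by the proof of Lemma \ref{hump1}. Treating the $\a$-string through $\b$, say $\b-d\a,\dots,\b+u\a$, as an irreducible module for $\mathfrak{sl}_2^{(\a)}=\langle x_\a,x_{-\a},h_\a\rangle$ and running the standard weight recursion gives $c_{\a,\b}\,N_{-\a,\a+\b}=u(d+1)$, where $[x_{-\a},x_{\a+\b}]=N_{-\a,\a+\b}x_\b$. Separately, invariance of $\fm$ applied to the triple $\a,\b,-\a-\b$, together with the pairings $(x_\mu,x_{-\mu})=2/(\mu,\mu)$ from (i), yields the cyclic identity $N_{-\a-\b,\a}/(\b,\b)=c_{\a,\b}/(\a+\b,\a+\b)$; combining this with the antisymmetry $N_{-\a-\b,\a}=-N_{\a,-\a-\b}$ and the sign relation $N_{\a,-\a-\b}=-N_{-\a,\a+\b}$ from the previous paragraph gives $N_{-\a,\a+\b}=\frac{(\b,\b)}{(\a+\b,\a+\b)}c_{\a,\b}$. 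Substituting into the string relation produces $c_{\a,\b}^2=u(d+1)(\a+\b,\a+\b)/(\b,\b)$, and then Lemma \ref{hump1} ($d+1=u(\a+\b,\a+\b)/(\b,\b)$) collapses the right-hand side to $(d+1)^2$, so $c_{\a,\b}=\pm(d+1)\in\bbbz$; finiteness of the set of structure constants is immediate since string lengths in reduced rank-two root systems are bounded. I expect the main obstacle to be the second paragraph: the sign relation $c_{\a,\b}=-c_{-\a,-\b}$ is precisely the datum that the form and the string cannot supply on their own, and pinning it down forces one to confront the fact that a Chevalley automorphism need not square to the identity globally, even though it must do so on every $\mathfrak{sl}_2$-triple.
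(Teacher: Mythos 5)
Your proposal is correct and follows essentially the same route as the paper's proof: pick $x_\a$ for $\a$ in a positivity decomposition, set $x_{-\a}=-\tau(x_\a)$, normalize using the form, obtain $c_{\a,\b}=-c_{-\a,-\b}$ by applying $\tau$ to the bracket, and reduce the quadratic formula for $c_{\a,\b}^2$ to the finite-dimensional simple subalgebra attached to $R_{\a,\b}=(\bbbz\a\oplus\bbbz\b)\cap R$. The only differences are that the paper simply cites \cite[Proposition 25.2]{Hum80} at the final step where you rederive the string/cyclic-identity computation by hand, and that your explicit verification that $\tau^2$ fixes each $x_\a$ (so that $\tau(x_\gamma)=-x_{-\gamma}$ holds for \emph{all} $\gamma\in R^\times$, not merely for $\gamma\in R^+$) makes precise a point the paper leaves implicit.
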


\proof We consider a positivity decomposition $R^\times=R^+\uplus R^-$ as in (\ref{eq2}). For $\a\in R^+$, we pick
a non-zero $x'_\a\in\fg_\a$ and set $x'_{-\a}=-\tau(x'_\a)$.
By (\ref{oldlem}), $[x'_\a,x'_{-\a}]=(x'_\a,x'_{-\a})t_\a$ where $(x'_\a, x'_{-\a})\not=0$. Replacing $x'_\a$ with $x_\a:=cx'_\a$ where
the constant $c$ satisfies $c^2=2/(\a,\a)(x'_\a,x'_{-\a})$, and setting
$x_{-\a}=-\tau(x_\a)$ we get $[x_\a,x_{-\a}]=2t_\a/(\a,\a)$. 

Now let $\{\a,\b\}$ be a nilpotent pair in $R$. Then
$[x_\a,x_\b]=c_{\a,\b}x_{\a+\b}$ for some non-zero $c_{\a,\b}\in\bbbc$.
Applying the Chevalley automorphism $\tau$ to both sides of the latter equality  gives
$c_{\a,\b}=-c_{-\a,-\b}$. 

Next, we set
\begin{equation}\label{simple1}
R_{\a,\b}=(\bbbz\a+\bbbz\b)\cap R\andd
\fg_{\a,\b}=\sum_{\gamma\in R_{\a,\b}^\times}\fg_\gamma\oplus
\sum_{\gamma\in R_{\a,\b}}[\fg_\gamma,\fg_{-\gamma}].
\end{equation}
Then $R_{\a,\b}$ is an irreducible finite root system and $\fg_{\a,\b}$ is a finite dimensional simple Lie algebra of the same type of $R_{\a,\b}$ (see {\cite[Lemma 3.5]{AP19}}).
%consider the finite root system $R_{\a,\b}$ and the finite dimensional simple Lie algebra $\fg_{\a,\b}$ as in (\ref{simpel1}). 
%$$\fg_{\a,\b}=\sum_{\a\in R_{\a,\b}^\times}\fg_\a
%+\sum_{\a\in R_{\a,\b}}[\fg_\a,\fg_{-\a}].$$ 
%By 
%\cite[Lemma 23.2]{AF}\todo{give it in section 1}, $\fg_{\a,\b}$ is a finite dimensional simple Lie algebra. 
{Since the Cartan subalgebra $\sum_{\gamma\in R_{\a,\b}}[\fg_\gamma,\fg_{-\gamma}]$ of $\fg_{\a,\b}$ is contained in $\fh$, the automorphism $\tau$ restricts to a Chevalley automorphism of $\fg_{\a,\b}$.} Now applying {\cite[Proposition 25.2]{Hum80}} to
the Lie algebra $\fg_{\a,\b}$, we get that
$c_{\a,\b}^2=u(d+1)(\a+\b,\a+\b)/(\b,\b)$.
This together with Lemma \ref{lemnew1}, gives
$c_{\a,\b}=\pm(d+1)$ and that the set $\{c_{\a,\b}\mid \{\a,\b\}\hbox{ a nilpotent pair in }R\}$ is finite.\qed

\begin{DEF}\label{chevalley-sys}
{Let $(\fg,\fm,\fh)$ be an extended affine Lie algebra and $\tau$ be a Chevalley automorphism for $\fg$. A set $\mathcal{C}=\{x_{\a}\in\fg_\a\mid\a\in R^\times\}$ is called a {\it Chevalley system} for $(\fg,\tau)$ or simply for $\fg$ if}

	- $[x_\a,x_{-\a}]=h_\a:=2t_\a/(\a,\a)$,

	- $\tau(x_\a)=-x_{-\a}$,

\noindent {for all $\a\in R^\times$.}
\end{DEF}                         

{We note that by Proposition {\ref{hump3}(i),} it is possible to choose a Chevalley system $\mathcal{C}=\{x_{\a}\in\fg_\a\mid\a\in R^\times\}$ for $(\fg,\tau)$. If $\mathcal{C}^\prime=\{y_{\a}\in\fg_\a\mid\a\in R^\times\}$ is another Chevalley system for $\tau$, then clearly we have $x_\a=\pm y_\a$, for each $\a\in R^\times$. Also note that any Chevalley system for $\tau$ satisfies condition (ii) of Proposition \ref{hump3}. From now on we fix a Chevalley system $\mathcal{C}=\{x_{\a}\in\fg_\a\mid\a\in R^\times\}$ for $(\fg,\tau)$.}

\begin{lem}\label{newlem34}
Let $\d\in R^0$. Fix a finite root system $\rd$ as in (\ref{eq99}) such that $\d+\dot\a\in R$ for some $\dot\a\in\rd^\times.$ Set
\begin{equation}\label{affine1}
R^\times_{\dot R,\d}:=(\rd+\bbbz\d)\cap R^\times,\quad R_{\dot R,\d}:=\big((\rd+\bbbz\d)\cap R^\times\big)\cup \big((R^\times_{\rd,\d}-R^\times_{\rd,\d})\cap R^0\big),$$
and
$$\fg^a_{\rd,\d}:=\la\fg_\a\mid\a\in R^\times_{\rd,\d}\ra\oplus\bbbc d,
\end{equation}
where $d$ is an appropriately chosen element of $\fh$.
Then $R_{\rd,\d}$ is an affine root subsystem of $R$ and $\fg^a_{\rd,\a}$ is an affine Lie subalgebra of $\fg$ with root system $R_{\rd,\d}$. Moreover,
$\dim(\fg_\d)\geq {\dim(\fg^a_{\rd,\d})_\d}$.
\end{lem}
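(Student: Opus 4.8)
The plan is to establish the three assertions in turn: first that $R_{\rd,\d}$ is an affine (i.e.\ nullity one) root system, then to promote this to the Lie-algebra statement, and finally to read off the dimension inequality as an inclusion of weight spaces. Throughout I work inside the subspace $\mathcal{U}:=\dot\CV\oplus\bbbr\d$ of $\v$, whose radical is precisely $\bbbr\d=\mathcal{U}\cap\v^0$, since the form is positive definite on $\dot\CV$ while $\d\in\v^0$.

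For the first assertion I would verify axioms (R1)--(R5) of Definition \ref{def2} for $R_{\rd,\d}\subseteq\mathcal{U}$. The decisive point is that $\d$ is isotropic: for $\a=\dot\a+m\d$ and $\b=\dot\b+n\d$ in $R^\times_{\rd,\d}$ the reflection is $w_\a(\b)=w_{\dot\a}(\dot\b)+\big(n-(\dot\b,\dot\a^\vee)m\big)\d$, which again lies in $(\rd+\bbbz\d)\cap R^\times$, while every isotropic root of $R_{\rd,\d}$ is fixed by each $w_\a$; this yields (R3), and (R2), (R5) are inherited from $R$. For (R1) one notes $\rd\subseteq R^\times_{\rd,\d}$ (taking $m=0$, as $0\in S$ and $0\in L$), which spans $\dot\CV$, and $\d=(\d+\dot\a)-\dot\a\in(R^\times_{\rd,\d}-R^\times_{\rd,\d})\cap R^0$ supplies the remaining direction, so $\la R_{\rd,\d}\ra$ is a full lattice of rank $\ell+1$ (discrete, being contained in $\la R\ra$). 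Finally (R4) is immediate from (R4) for $R$: a difference of two elements of $R^\times_{\rd,\d}$ lying in $\mathcal{U}^0=\bbbr\d\subseteq\v^0$ belongs to $\v^0\cap(R^\times-R^\times)=R^0$, and conversely an element of $(R^\times_{\rd,\d}-R^\times_{\rd,\d})\cap R^0$ lies in $\mathcal{U}\cap\v^0=\mathcal{U}^0$. As $\dim\mathcal{U}^0=1$ and $\rd\cong\bar R$ is irreducible, $R_{\rd,\d}$ is an irreducible affine root system.

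For the second assertion I would exploit the $\d$-grading. Since $\rd=R\cap\dot\CV$ is a closed irreducible subsystem, $\dot\fg:=\la\fg_{\dot\a}\mid\dot\a\in\rd\ra$ is a finite dimensional simple Lie algebra of type $\rd$ with Cartan subalgebra $\dot\fh\sub\fh$. Choosing $d\in\fh$ with $\dot\a(d)=0$ for all $\dot\a\in\rd$ and $\d(d)\neq0$ (possible because $\d\notin\dot\CV$), the element $d$ realizes the $\bbbz$-grading of $\fg^a_{\rd,\d}$ by $\d$-degree and pairs nontrivially with $t_\d\in\fg^a_{\rd,\d}$, making $\fm$ non-degenerate on $\fg^a_{\rd,\d}$. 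Lemma \ref{oldlem} then controls the brackets: $[\fg_\a,\fg_\b]=\fg_{\a+\b}$ whenever $\a+\b\in R^\times_{\rd,\d}$, each non-isotropic weight space is one dimensional, and for every $m\d\in R^0_{\rd,\d}$, writing $m\d=\gamma-\gamma'$ with $\gamma,\gamma'\in R^\times_{\rd,\d}$ gives $0\neq[\fg_\gamma,\fg_{-\gamma'}]\sub(\fg^a_{\rd,\d})_{m\d}$ by Lemma \ref{oldlem}(iv). Hence the set of nonzero $\fh$-weights of $\fg^a_{\rd,\d}$ is exactly $R_{\rd,\d}$, and the $\bbbz$-graded algebra is a (possibly twisted) affinization of $\dot\fg$; equivalently, $(\fg^a_{\rd,\d},\fm,\dot\fh\oplus\bbbc t_\d\oplus\bbbc d)$ satisfies the extended affine axioms (A1)--(A5) with a nullity one root system, whence it is an affine Kac--Moody Lie algebra with root system $R_{\rd,\d}$. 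This identification is the main obstacle: one must check that no extra weights are created and that the isotropic graded pieces organize into the central part of a (twisted) loop algebra, the delicate input being the interplay between the one dimensional non-isotropic spaces (Lemma \ref{oldlem}(i)) and the possibly higher dimensional isotropic spaces $\fg_{m\d}$ of the ambient algebra.

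The third assertion is then an immediate consequence. The Cartan subalgebra $\dot\fh\oplus\bbbc t_\d\oplus\bbbc d$ of $\fg^a_{\rd,\d}$ is contained in $\fh$, and an $\fh$-weight $\gamma$ restricts to the affine root $\d$ on it precisely when $\gamma=\d$ (the conditions $\gamma|_{\dot\fh}=0$ and $\gamma(d)=\d(d)$ force $\gamma\in\bbbr\d$ and then $\gamma=\d$). Therefore $(\fg^a_{\rd,\d})_\d\sub\fg_\d$, giving $\dim\fg_\d\geq\dim(\fg^a_{\rd,\d})_\d$. This inequality quantifies exactly the phenomenon noted above: the affine subalgebra need only capture part of the isotropic root space $\fg_\d$ of the extended affine Lie algebra.
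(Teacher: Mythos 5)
Your proposal and the paper part ways at the outset: the paper disposes of the first two assertions by citing an external result (Lemma~3.4 of \cite{AP19-1}) and proves only the dimension inequality, via the one-line containment $(\fg^a_{\rd,\d})_\d=\sum_{\a\in R^\times_{\rd,\d}}[\fg_{\a+\d},\fg_{-\a}]\sub\fg_\d$. You instead attempt a self-contained verification. Your root-system half is sound: the reflection formula $w_\a(\b)=w_{\dot\a}(\dot\b)+(n-(\dot\b,\dot\a^\vee)m)\d$, the identification of the radical of $\dot\CV\oplus\bbbr\d$ with $\bbbr\d$, and the checks of (R1)--(R5) are all correct, and together they form a legitimate (if longer) replacement for the citation.

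The Lie-algebra half, however, has a genuine gap, and it sits exactly where you flag ``the main obstacle.'' To conclude that $\fg^a_{\rd,\d}$ is an affine Kac--Moody algebra you appeal to the axioms (A1)--(A5) in nullity one, but (A1) requires the form to be non-degenerate on $\fg^a_{\rd,\d}$, and this is not established: on an isotropic weight space $(\fg^a_{\rd,\d})_{m\d}$, which may have dimension greater than one, non-degeneracy of the pairing with $(\fg^a_{\rd,\d})_{-m\d}$ needs an argument in the spirit of Lemma~\ref{zcore}(i) and does not follow from Lemma~\ref{oldlem}. Moreover, even granting (A1)--(A5), a nullity-one extended affine Lie algebra is an affine Kac--Moody algebra only under a tameness hypothesis (this is the content of the Allison--Berman--Gao--Pianzola classification in nullity one); you neither verify tameness of the subalgebra nor invoke that classification. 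Your third paragraph is correct as written, but observe that the inequality $\dim(\fg_\d)\geq\dim(\fg^a_{\rd,\d})_\d$ does not actually require knowing that the subalgebra is affine: since $\fg^a_{\rd,\d}$ is generated by $\fh$-weight spaces together with $d\in\fh$, it is an $\fh$-submodule of $\fg$, and your own computation showing that the only $\fh$-weight of $\fg^a_{\rd,\d}$ restricting to $\d$ on $\dot\fh\oplus\bbbc t_\d\oplus\bbbc d$ is $\d$ itself already yields $(\fg^a_{\rd,\d})_\d\sub\fg_\d$; this is essentially the containment the paper records.
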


\proof
The first claim of the statement is just {\cite[Lemma 3.4]{AP19-1}}).\ The second claim is then immediate as 
$$(\fg^a_{\rd,\d})_\d=\sum_{\a\in R^\times_{\rd,\d}}[\fg_{\a+\d},\fg_{-\a}]\sub
\sum_{\a\in R^\times}[\fg_{\a+\d},\fg_{-\a}]\sub\fg_\d.$$
\qed

\begin{rem}\label{rem34} (i) The choice of finite root system $\rd$ in Lemma \ref{newlem34} satisfying $\dot\a+\d\in R$ for some $\dot\a\in\rd$ is possible by Lemma \ref{isos}.

(ii) In {\cite[Proposition 1.4.2]{ABFP09}}, an upper bound is obtained for the dimension of isotropic root spaces of centerless core of an extended affine Lie algebra. Lemma \ref{newlem34} gives a lower bound. In Theorem \ref{thmdim} below, we obtain an upper bound for the isotropic soot spaces of the core.
\end{rem}

{For the sake of simplicity, in the sequel we use the notation
$$x_{\d}^\a:=[x_{\a+\d},x_{-\a}],\qquad (\a\in R^\times,\; 0\not=\d\in R^0,\;\a+\d\in R).
$$
(By convention we set $x_{\d}^\a=0$ if $\a+\d\not\in R$.)}

 \begin{lem}\label{tec1}{ (i) If $\a,\b\in R^\times$, $0\not=\d\in R^0$,
with $\a+\b\in R^\times$ and $\a+\b+\d\in R$, then
 $x^{\a+\b}_\d\in\span\{x^{\a}_\d,x^{\b}_\d\}$ and
 $\bbbc x^{\a}_\d=\bbbc x^{-\a}_\d$.}
 
 {(ii) 
 $\fg_\d\cap\fg_c=\span\{x^{\dot\a+\eta}_\d\mid\dot\a\in\dot\Pi,\;\eta\in S\}.$}
 \end{lem}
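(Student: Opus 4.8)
The plan is to treat the two assertions of (i) separately and then build (ii) on top of them. For the first assertion of (i), I would exploit that $\{\a,\b\}$ is a nilpotent pair, so by Proposition~\ref{hump3} one has $[x_{-\a},x_{-\b}]=c_{-\a,-\b}x_{-\a-\b}$ with $c_{-\a,-\b}=-c_{\a,\b}\neq 0$, whence $x_{-\a-\b}=c_{-\a,-\b}^{-1}[x_{-\a},x_{-\b}]$. Substituting this into $x_\d^{\a+\b}=[x_{\a+\b+\d},x_{-\a-\b}]$ and expanding the inner double bracket by the Jacobi identity gives
\begin{equation*}
x_\d^{\a+\b}=c_{-\a,-\b}^{-1}\big([[x_{\a+\b+\d},x_{-\a}],x_{-\b}]+[x_{-\a},[x_{\a+\b+\d},x_{-\b}]]\big).
\end{equation*}
Since $(\b+\d,\b+\d)=(\b,\b)\neq 0$ and $(\a+\d,\a+\d)=(\a,\a)\neq0$, the roots $\b+\d$ and $\a+\d$ are non-isotropic, so $[x_{\a+\b+\d},x_{-\a}]\in\fg_{\b+\d}$ and $[x_{\a+\b+\d},x_{-\b}]\in\fg_{\a+\d}$ are scalar multiples of $x_{\b+\d}$ and $x_{\a+\d}$ respectively (and vanish, consistently with our convention, when the corresponding root is absent). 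Re-contracting, the first summand becomes a multiple of $[x_{\b+\d},x_{-\b}]=x_\d^{\b}$ and the second a multiple of $[x_{-\a},x_{\a+\d}]=-x_\d^{\a}$, which yields $x_\d^{\a+\b}\in\span\{x_\d^{\a},x_\d^{\b}\}$.

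For the equality $\bbbc x_\d^{\a}=\bbbc x_\d^{-\a}$, I would pass to the local affine subalgebra $\fg^a_{\{\pm\a\},\d}$ produced by Lemma~\ref{newlem34} with the rank-one finite system $\rd=\{\pm\a\}$; its root system $R_{\{\pm\a\},\d}$ is of affine type $A_1^{(1)}$. Both $x_\d^{\a}=[x_{\a+\d},x_{-\a}]$ and $x_\d^{-\a}=[x_{-\a+\d},x_{\a}]$ are brackets of root vectors attached to real roots of this subsystem, hence lie in its isotropic space $(\fg^a_{\{\pm\a\},\d})_\d$. As the imaginary root spaces of an $A_1^{(1)}$ affine algebra are one-dimensional, this space is at most one-dimensional, so the two vectors span the same line, giving the claim (both sides being $\{0\}$ in the degenerate case).

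For (ii), the inclusion $\supseteq$ is immediate: each $x_\d^{\dot\a+\eta}=[x_{\dot\a+\eta+\d},x_{-\dot\a-\eta}]$ is a bracket of non-isotropic root vectors (here $\dot\a+\eta$ is non-isotropic whenever it is a root, since $(\dot\a+\eta,\dot\a+\eta)=(\dot\a,\dot\a)\neq0$), hence lies in $\fg_c\cap\fg_\d$. For $\subseteq$ I would proceed in two reductions. First, using that $\fg_c$ is perfect and generated by its non-isotropic root spaces, an induction on bracket length (expanding every bracket against a root vector by the Jacobi identity, exactly as in the proof of (i)) shows $\fg_c\cap\fg_\d=\span\{x_\d^{\a}\mid\a\in R^\times\}$; the only genuinely recursive terms are brackets of two isotropic homogeneous elements, and these reduce to the $x_\d^{\a}$ form after one Jacobi expansion. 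Second, I would reduce the index set from $R^\times$ to $\dot\Pi+S$: writing $\a=\dot\b+\eta$ with $\dot\b\in\rd^\times$, $\eta\in S$ (and assuming $\a+\d\in R$, else $x_\d^\a=0$), the relation $\bbbc x_\d^{\a}=\bbbc x_\d^{-\a}$ from (i) lets me assume $\dot\b\in\rd^+$, and then I induct on the height of $\dot\b$. When $\mathrm{ht}(\dot\b)\geq2$ I choose a simple root $\dot\a_i\in\dot\Pi$ with $\dot\gamma:=\dot\b-\dot\a_i\in\rd^+$, split $\eta=\eta_1+\eta_2$ so that $\dot\a_i+\eta_1$ and $\dot\gamma+\eta_2$ are both roots, and apply the first part of (i) to the nilpotent pair $\{\dot\a_i+\eta_1,\dot\gamma+\eta_2\}$ (whose sum is $\a$), thereby expressing $x_\d^{\a}$ through two generators of strictly smaller height.

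The main obstacle is precisely this splitting $\eta=\eta_1+\eta_2$ in the second reduction of (ii). In the simply-laced case it is painless—one may take $\eta_1=0$ and $\eta_2=\eta$, since every element of $\rd^\times+S$ is a root. In the non-simply-laced case the short/long constraints must be respected ($\dot\a+\eta'$ is a root only when $\eta'\in S$ for short $\dot\a$ and $\eta'\in L$ for long $\dot\a$), so the splitting has to be arranged using the semilattice relations $\la L\ra\sub S$, $S=S+L$ and $L=kS+L$, together with the fact that a long root $\dot\b+\eta\in R^\times$ already forces $\eta\in L$. I expect the cleanest way to organize this is to run the height induction inside the rank-two subsystem $R_{\a,\b}$ of Proposition~\ref{hump3}, where the admissible translates of short and long roots are completely explicit.
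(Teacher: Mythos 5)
Your treatment of the first assertion of (i) is the paper's own argument: substitute $x_{-\a-\b}\in\bbbc[x_{-\a},x_{-\b}]$ and expand by Jacobi, landing in $\bbbc x^\b_\d+\bbbc x^\a_\d$. For the second assertion of (i) you take a genuinely different route. The paper argues directly: since $\a+\d\in R^\times$, Lemma \ref{oldlem}(iv) gives $x_{\a+\d}=[x_\a,x_\d]$ for some $x_\d\in\fg_\d$, and one Jacobi expansion of $[[x_\a,x_\d],x_{-\a}]$ kills the term $[[x_{-\a},x_\a],x_\d]$ (as $\d(t_\a)=0$) and leaves a multiple of $x^{-\a}_\d$. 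Your route through the rank-one affine subalgebra and the one-dimensionality of the imaginary root space of $A_1^{(1)}$ is also sound (both vectors are nonzero when $\a+\d\in R$, since $w_\a(\a+\d)=-\a+\d$ and non-isotropic root spaces are one-dimensional), but note that Lemma \ref{newlem34} is stated for a finite root system $\rd$ chosen as in (\ref{eq99}), not for the rank-one subsystem $\{\pm\a\}$; you would need to invoke the $A_1$-subsystem result of \cite{APari21} explicitly to justify that step. The paper's computation is more elementary and self-contained.

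In (ii) the place where your proposal stops short of a proof is exactly the point you flag yourself: the splitting $\eta=\eta_1+\eta_2$ in the non-simply-laced case. A single height induction on $\dot\b$ forces you, at each step, to solve a short/long compatibility problem for the translates, and ``running the induction inside $R_{\a,\b}$'' does not resolve it — the simple roots of a rank-two subsystem need not lie in $\dot\Pi$, so that device does not deliver the asserted spanning set. The paper avoids the problem with a two-stage reduction: first, every root of $\rd$ is either short or a sum of two short roots, and since a long root $\dot\b+\eta\in R^\times$ forces $\eta\in L\sub S$ while $\rds+S\sub R$, part (i) applied to the nilpotent pair $\{\dot\gamma,\dot\b'+\eta\}$ (both $\dot\gamma,\dot\b'$ short) reduces everything to $x^{\dot\a+\eta}_\d$ with $\dot\a$ short and $\eta\in S$ — no splitting of $\eta$ is needed because short roots tolerate arbitrary translates from $S$. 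Only then does one descend from short roots to simple roots, using that every non-simple short root is a sum of simple roots at least one of which is short. You should restructure the non-simply-laced case along these lines (or actually carry out the splitting via $S=S+L$, which your sketch gestures at but does not execute); as written, the hardest case of the lemma is asserted rather than proved. Your preliminary reduction of $\fg_\d\cap\fg_c$ to $\span\{x^\a_\d\mid\a\in R^\times\}$ by induction on bracket length is fine and is in fact left implicit in the paper.
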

 
 \proof (i) Let $\a,\b$ and $\d$ be as in  the statement. We have
 \begin{eqnarray*}
 x^{\a+\b}_\d&=&[x_{\a+\b+\d}, x_{-\a-\b}]\\
 &\in&\bbbc [x_{\a+\b+\d},[x_{-\a},x_{-\b}]]\\
 &\in&\bbbc \big(-[x_{-\b},[x_{\a+\b+\d},x_{-\a}]]-
 {[x_{-\a},[x_{-\b},x_{\a+\b+\d}]]}\big)\\
%\end{eqnarray*}
%Now
%\begin{eqnarray*}
%A&=& [x_{-\b},[[x_{\a},x_{\b+\d}],x_{-\a}]]\\
%&=&-[x_{-\b},[[x_{-\a},x_{\a}],x_{\b+\d}]]
%-[x_{-\b},[[x_{\b+\d},x_{-\a}],x_\a]]\\
&\in&\bbbc x^\b_\d+\bbbc x^\a_\d.
\end{eqnarray*}
%Similarly, we get $B\in\bbbc x_{\a,\d}$. 

%\todo{Does the assumption always hold? Answer: If not then
%$x_\d^\a=0$ and we are done}
Assume now that  $\a+\d\in R $. Then there exists $x_\d\in\fg_\d$ such that $x_{\a+\d}=[x_\a,x_\d]$. Then
 \begin{eqnarray*}
 x^{\a}_\d&=&[x_{\a+\d},x_{-\a}]\\
&=& [[x_\a.x_\d],x_{-\a}]=-[[x_{-\a},x_\a],x_\d]-
[[x_\d,x_{-\a}],x_\a]\\
&=&0+[[x_\d,x_{-\a}],x_\a]\in\bbbc x^{-\a}_\d.
\end{eqnarray*}

(ii) Assume first that $R$ is of simply laced type. If $\a\in R^\times$, then
$\a=\dot\a+\eta$ for some $\dot\a\in \rd$ and some $\eta\in R^0$.
Then $x^\a_\d=x^{\dot\a+\eta}_\d.$ We now write
$\dot\a=\dot\a_1+\dot\b$, where $\dot\a_1\in\dot\Pi$ and $\dot\b\in\rd.$ Since $R$ is of simply laced type, $\dot\a_1+\eta\in R$. Then part (i) shows that
$x^\a_\d\in\bbbc x^{\dot\b}_\d+{\bbbc x^{\dot\a_1+\eta}_\d}$.  
Repeating the same argument with $\dot\b$ in place of $\dot\a$, we get the result inductively.

Next, assume  that $R$ is of non-simply laced types. Let $\a=\dot\a+\eta\in R^\times$, $\dot\a\in \rd$, $\eta\in R^0$. Using the realizations of finite root systems, 
%\todo{this is true of course for non-simple roots. Answer: No, this is also true for simple roots} 
one observes that each root in $\rd$ is either short or sum of two  short roots, therefore $\dot\a=\dot\b+\dot\gamma$, where $\dot\b,\dot\gamma\in\rds$. Moreover, from (\ref{sss}), we know $\dot\b+\eta\in R$. Then using  part (i), we get
$x^\a_\d\in\bbbc x^{\dot\gamma}_\d+\bbbc x^{\dot\b+\eta}_\d$. This shows $\fg_\d\cap\fg_c$ is spanned by elements $x^{\dot\a+\eta}_\d$, $\dot\a\in\rds$, $\eta\in S$. Now one can see that each non-simple short root can be written as a sum of simple roots where at least one is short. Then again using part (i), the result concludes.
\qed

For $\b\in R^\times$, we consider automorphism
$\Phi_\b$ of $\fg$ defined by
$$\Phi_\b=\exp(\ad x_\b)\exp(\ad -x_\b)\exp(\ad x_\b),$$
which satisfies $\Phi_\b(\fg_\a)=\fg_{w_\b(\a)}$ for $\a\in R$, see \cite[Proposition I.1.27]{AABGP97}. 
 
\begin{lem}\label{lemassump0} Assume that the extended affine Lie algebra $\fg$  satisfies either of the following:
 
(i) $\hbox{rank }\fg>1$,
 
(ii) $\fg$ is finite dimensional simple or an affine Lie algebra.
 
 \noindent
Then
\begin{equation}\label{assump1}
[x_\b,x_\d^\a]\in2\bbbz x_{\b+\d},\quad(\a,\b\in R^\times,\;0\not=\d\in R^0,\;\a=\pm\b\;\mod\; \v^0).
\end{equation}
\end{lem}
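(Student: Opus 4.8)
The plan is to reduce the computation of the structure constant in $[x_\b,x_\d^\a]=c\,x_{\b+\d}$ to a single elementary bracket, and then to control the remaining scalar by a local argument that genuinely uses $\mathrm{rank}\,\fg>1$. First I would dispose of the degenerate situations: if $\a+\d\notin R$ then $x_\d^\a=0$ by convention, and if $\b+\d\notin R$ then $[x_\b,x_\d^\a]\in\fg_{\b+\d}=\{0\}$; in either case the assertion holds. So I may assume $\a+\d,\b+\d\in R^\times$. Since $\a\equiv\pm\b\ (\mathrm{mod}\ \v^0)$, the four roots $\a,\a+\d,\b,\b+\d$ all have finite part $\pm\bar\a$, and $\b-\a$ (resp.\ $\b+\a$) lies in $\v^0\cap(R^\times-R^\times)=R^0$ or is $0$.

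The core computation is the universal case in which the inner vector is $x_\d^\b=[x_{\b+\d},x_{-\b}]$. By the Jacobi identity,
\[[x_\b,x_\d^\b]=[[x_\b,x_{\b+\d}],x_{-\b}]+[x_{\b+\d},[x_\b,x_{-\b}]].\]
Here $[x_\b,x_{\b+\d}]\in\fg_{2\b+\d}=\{0\}$, because $\overline{2\b+\d}=2\bar\b\notin\bar R$ ($R$ is reduced), and $[x_\b,x_{-\b}]=h_\b$ by Definition~\ref{chevalley-sys}. As $\d\in\v^0$ is isotropic we have $(\d,\b^\vee)=0$, so $(\b+\d)(h_\b)=(\b,\b^\vee)=2$ and therefore $[x_\b,x_\d^\b]=-(\b+\d)(h_\b)\,x_{\b+\d}=-2\,x_{\b+\d}$. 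This already proves the lemma whenever $x_\d^\a=x_\d^\b$. In particular it disposes of case (ii): if $\fg$ is finite-dimensional simple then $R^0=\emptyset$ and the statement is vacuous, while if $\fg$ is affine then $R^0\subseteq\bbbz\delta$, so $\b-\a$ and $\d$ are linearly dependent and a direct computation in the loop presentation gives $x_\d^\a=x_\d^\b=h_{\dot\a}\otimes t^{\,\d}$; the same holds, within case (i), whenever $\b-\a$ and $\d$ are dependent, by working inside the local affine subalgebra $\fg^a_{\rd,\d}$ of Lemma~\ref{newlem34}.

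It remains to compare $x_\d^\a$ with $x_\d^\b$ when $\zeta:=\b-\a$ and $\d$ are linearly independent. My claim is that $x_\d^\a=\ve\,x_\d^\b$ with $\ve\in\{\pm1\}$, whence $[x_\b,x_\d^\a]=\ve[x_\b,x_\d^\b]=-2\ve\,x_{\b+\d}\in2\bbbz\,x_{\b+\d}$. Both vectors lie in $\fg_\d\cap\fg_c$, and since $\mathrm{rank}\,\fg>1$ I would realize them inside a local subalgebra attached to a rank-$2$ finite subsystem through $\bar\a$: writing $\bar\a=\bar\mu+\bar\nu$ for a nilpotent pair with independent finite directions and using the rank-$2$ simple subalgebras $\fg_{\mu,\nu}$ of Proposition~\ref{hump3} together with the local affine subalgebras of Lemma~\ref{newlem34}, the isotropic vector $x_\d^\gamma=[x_{\gamma+\d},x_{-\gamma}]$ is computed to be a single toral element $h_{\dot\gamma}\otimes t^{\,\d}$ depending on $\gamma$ only through $\bar\gamma$. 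As $\bar\a=\pm\bar\b$ (using Lemma~\ref{tec1}(i), $\bbbc x_\d^\a=\bbbc x_\d^{-\a}$, in the sign $-$), this identifies $x_\d^\a$ and $x_\d^\b$ up to the quantum-torus cocycle, and the Chevalley normalizations $[x_\gamma,x_{-\gamma}]=h_\gamma$, $\tau(x_\gamma)=-x_{-\gamma}$ force the scalar to be $\pm1$.

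The main obstacle is precisely this last identification. When $\zeta$ and $\d$ are independent the configuration $\{\a,\a+\d,\b,\b+\d\}$ spans a nullity-$2$ sublattice and cannot be placed inside any single nullity-$1$ affine subalgebra; moreover the Jacobi identity alone only returns the tautology $[x_\b,x_\d^\a]=[x_{\a+\d},x_\zeta^\a]$, so no purely formal manipulation can pin down $c$. The genuine input is that in a local type of rank $\ge 2$ the coordinate structure is associative (matrix-like), so that $x_\d^\gamma$ is insensitive to the isotropic part of $\gamma$; this is exactly the step where the hypothesis $\mathrm{rank}\,\fg>1$ — the exclusion of type $A_1$, whose Jordan-type coordinatization would not permit such an identification — is indispensable.
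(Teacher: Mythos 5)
Your opening computation is correct and clean: $[x_\b,x_\d^\b]=[x_{\b+\d},h_\b]=-2x_{\b+\d}$ does follow from the Jacobi identity, $2\b+\d\notin R$, and the normalization $[x_\b,x_{-\b}]=h_\b$. But the whole proof then hinges on the claim that $x_\d^\a=\pm x_\d^\b$ whenever $\a\equiv\pm\b \pmod{\v^0}$, and this claim is never proved; you yourself flag it as ``the main obstacle'' and then offer only a heuristic (``the coordinate structure is associative\dots so $x_\d^\gamma$ is insensitive to the isotropic part of $\gamma$''). That heuristic is not available from anything established in the paper, and it is genuinely doubtful: when $\b-\a$ and $\d$ are independent the four roots $\a,\a+\d,\b,\b+\d$ generate a nullity-$2$ sublattice, so none of the local objects you invoke (the finite simple subalgebras $\fg_{\mu,\nu}$ of Proposition \ref{hump3}, the nullity-$1$ affine subalgebras $\fg^a_{\rd,\d}$ of Lemma \ref{newlem34}) contains all four root vectors at once. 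Moreover Proposition \ref{protemp2} and Theorem \ref{thmdim} show that $\fg_\d\cap\fg_c$ is in general spanned by several vectors $x_\d^\b$ with $\b$ running over a reflectable base in which many roots share the same finite part (e.g.\ the $\sg_j-\te_s$), so proportionality of $x_\d^\a$ and $x_\d^{\a+\eta}$ is precisely what cannot be assumed: for noncommutative coordinates the two halves of $h_{\dot\a}$ pick up different cocycle twists in $[x_{\a+\eta+\d},x_{-\a-\eta}]$ versus $[x_{\a+\d},x_{-\a}]$. Even your reduction from $\a\equiv-\b$ to $\a\equiv\b$ leans on Lemma \ref{tec1}(i), which only gives $\bbbc x_\d^\a=\bbbc x_\d^{-\a}$, not a proportionality constant in $\{\pm1\}$. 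Finally, your argument would force the coefficient to be exactly $\mp2$, whereas nothing rules out the value $0$ for a particular element of the multi-dimensional space $\fg_\d\cap\fg_c$.

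The paper avoids this comparison entirely. It picks an auxiliary root $\gamma$ with $\gamma+\b+\d\in R^\times$ and $\gamma\pm\a\notin\v^0$ (possible exactly because $\hbox{rank }\fg>1$), applies $\ad x_\gamma$ to $[x_\b,x_\d^\a]=cx_{\b+\d}$, and expands by Jacobi so that the two resulting brackets $[x_\d^\a,x_{\gamma'}]=kx_{\gamma'+\d}$ and $[x_\gamma,x_\d^\a]=k'x_{\gamma+\d}$ each live in a single nullity-$1$ affine subalgebra ($\fg^a_{\dot R_1,\d}$ and $\fg^a_{\dot R_2,\d}$), where $k,k'\in\bbbz$ are read off from affine realizations; the entire arithmetic content is the case-by-case parity check $k+k'\in2\bbbz$ recorded in Remark \ref{roottemp} (note that $k$ and $k'$ can differ, e.g.\ $k=0$, $k'=2$ in type $B_\ell$), which then yields $c=\pm k\pm k'\in2\bbbz$. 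If you want to salvage your route, the missing step is a genuine nullity-$2$ statement about $x_\d^\a$ versus $x_\d^{\a+\eta}$, and that is exactly what the transversal-root trick is designed to circumvent.
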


\proof 
(i) Assume that $\fg$ has rank $>1$, $\a,\b\in R^\times$ and  $\b=\a+\eta$ for some $\eta\in R^0$. We have $[x_\b,x^\a_\d]=cx_{\b+\d}$, for some complex constant $c$.  We must show that $c\in 2\bbbz$.
Since rank $\fg>1$, one checks directly from realization of finite root systems that there exists $\gamma\in R^\times $ such that $\gamma+\b+\d=\gamma+\a+\eta+\d\in R^\times$ and $\gamma\pm\a\not\in\v^0$.
%\todo{discuss the existence of this $\gamma$ also type $A_1$ should be discussed separately.}  
Then we have
\begin{equation}\label{temp9}
[x_\gamma,[x_\b,[x_{\a+\d},x_{-\a}]]]=[x_\gamma, cx_{\b+\d}]=c c_{\gamma,\b+\d} x_{\b+\gamma+\d},
\end{equation}
where $cc_{\gamma,\b+\d}$ is non-zero. On the other hand, using the Jacobi identity we see that
\begin{eqnarray*}
[x_\gamma,[x_\b,[x_{\a+\d},x_{-\a}]]]
&=&-[[x_{\a+\d},x_{-\a}],[x_{\gamma}, x_{\a+\eta}]]
+[x_{\a+\eta},[x_\gamma,[x_{\a+\d},x_{-\a}]]]\\
%\end{eqnarray*}
%\begin{eqnarray*}
&=&
-c_{\gamma,\a+\eta}[[x_{\a+\d},x_{-\a}],x_{\gamma'}]
+[x_{\a+\eta},[x_\gamma,[x_{\a+\d},x_{-\a}]]]\\
%&=&
%(c_1+c_2)x_{\b+\gamma+\d},
\end{eqnarray*}
where $\gamma'=\gamma+\a+\eta.$ Now we consider finite root systems
$$\dot{R}_1:=(\bbbz\a+\bbbz\gamma')\cap R,\qquad\dot{R}_2:=(\bbbz\a+\bbbz\gamma)\cap R,$$ and affine Lie algebras $\fg^a_{{\dot R}_1,\d}$, $\fg^a_{{\dot R}_2,\d}$, see Lemma \ref{newlem34}. The Chevalley automorphism $\tau$
then restricts to a Chevalley automorphism on $\fg^a_{{\dot R}_1,\d}$ and $\fg^a_{{\dot R}_2,\d}$.
Thus using realizations of affine Lie algebras as in \cite{Mit85}, we see that \begin{equation}\label{temp99}
[[x_{\a+\d},x_{-\a}],x_{\gamma'}]=kx_{\gamma'+\d}\andd [x_\gamma,[x_{\a+\d},x_{-\a}]]=k'x_{\gamma+\d}
\end{equation}
for some $k,k'\in\bbbz$, such that $k+k'\in2\bbbz$.
Then combining with (\ref{temp9}), we get
\begin{eqnarray*}
cc_{\gamma,\b+\d}x_{\gamma+\beta+\d}&=&
-kc_{\gamma,\a+\eta}x_{\gamma'+\d}+k'[x_{\a+\eta},x_{\gamma+\d}]\\
&=&
-kc_{\gamma,\b}x_{\gamma+\b+\d}+k'c_{\b,\gamma+\d}x_{\gamma+\b+\d}.
\end{eqnarray*}
Since $c_{\gamma,\b+\d},
c_{\gamma,\b},c_{\b,\gamma+\d}$ are equal up to $\pm$ signs, we get
$c\in2\bbbz$ as required.

(ii) By (i), we may assume $\hbox{rank } \fg=1$. If $\fg$ is finite dimensional simple, then $\b=\pm\a$ and the result is immediate. If $\fg$ is affine, then one can check from the realizations of affine Lie algebras that the result holds. \qed

\begin{rem}\label{roottemp}
The existence of root $\gamma$ and the integers $k,k'$ with $k+k'\in 2\bbbz$ involved  in the proof of Lemma \ref{lemassump0}
is presented in the following table, where for each case we have given a typical choice. Assume that $\a=\dot\a+\d_\a$ for some $\dot\a\in\rd$ and $\d_\a\in R^0$. Type $C_\ell$ is excluded because
of duality with type $B_\ell$. Types $E_{6,7}$ are similar to type $E_8$. In the table below, for the finite root system $\dot{R}$ we have used the realization given in \cite[\S 12.1]{Hum80}, also the integers $k$ and $k'$ are computed using the Chevalley basis given in \cite{Mit85} for an affine Lie algebra.

\begin{center}
\begin{tabular}{c|c|c|c|c|}

\hline
Type &$\dot\a$ & $\gamma$ & $k$ & $k'$\\
\hline
$A_\ell$ &$\ep_i-\ep_j$ & $\ep_j-\ep_k,\;k\not=i$ & $1$ & $1$\\
\hline
$B_\ell$ &$\ep_i$ & $\ep_j-\ep_i$ & $0$ & $2$\\
&$\ep_i-\ep_j$&$\ep_j$ & $1$ & $1$\\
\hline
$D_\ell$ &$\ep_i+\ep_j$ & $-\ep_j+\ep_k,\;k\not=i$ & $1$ & $1$\\
\hline
$F_4$ & $\ep_i$ &$\ep_j-\ep_i$ & $0$ & $2$\\
& $\ep_i+\ep_j$ & $-\ep_j$ & $1$ & $1$\\
& $\frac{1}{2}\sum_{i=1}^4\ep_i$ & $-\ep_1$ & $1$ & $1$\\
\hline
$G_2$ & $\ep_1-\ep_2$ & $\ep_2-\ep_3$ & $1$ & $1$\\
& $2\ep_1-\ep_2-\ep_3$ & $-\ep_1-\ep_2+2\ep_3$ & $1$ & $1$\\
\hline
$E_8$ & $\ep_i-\ep_j$ & $\ep_j-\ep_k,\;k\not=i$ & $1$ & $1$\\
& $\frac{1}{2}\sum_{i=1}^8\ep_i$ & $-\ep_1-\ep_2$ & $1$ & $1$\\
\hline
\end{tabular}
%\caption{\footnotesize The cardinality of every reflectable base of an EARS $R$}
%\label{tab1}
\end{center}
\end{rem}

}

 \section{Integral structure of the core}\label{Integral structure of the core}\setcounter{equation}{0}
In this section, we use the results  given in Section \ref{Preliminary results on integral structure} and the concept of a reflectable base to give a $\bbbz$-form for the core of an extended affine Lie algebra equipped with a Chevalley automorphism. This leads to a a notion of Chevalley basis for the core. As a byproduct, we give an upper bound for the dimension of the isotropic root spaces of the core.   

  We continue with the assumptions and notations as in  Section \ref{Preliminary results on integral structure}.
In particular,  $(\fg,\fm,\fh)$ is a tame reduced 
extended affine Lie algebra with root system $R$, equipped with a Chevalley automorphism $\tau$. In this section, we further assume that  {\it $\fg$ has rank $>1$.}

 \begin{lem}\label{lemassump2}
 Let $\phi:=\sum_{k=0}^n\frac{(\ad x_\b)^k}{k!}$, where $\b\in R^\times$. Then for $\a\in R^\times$ and $\d\in R^0$, the following hold:
 
 (i) If $\a+\b\in R^\times$, then $\phi (x_\a)\in\span_\bbbz\{x_{\a-d\b},\ldots,x_{\a+u\b}\}$.

 (ii)  If $\a+\b\in R^0$, then $\phi(x_\a)\in\span_\bbbz\{x_\a,x_{\a+\b}^{-\a},x_{\a+2\b}\}$.
 
 (iii)  $\phi (x_\d^\a)\in\span_\bbbz\{x_\d^\a, x_{\b+\d}\}.$
 
% (vi) $\phi ([x_\a,x_{-\a}])\in\span_\bbbz\{[x_\a,x_{-\a}], x_{\b}\}.$
 
 (iv) $\phi(h_\a)\in\span_\bbbz\{h_\a,x_\b\}$, where $h_\a:=[x_\a,x_{-\a}].$
  \end{lem}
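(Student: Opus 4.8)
The plan is to evaluate $\phi$ term by term on each of the four generators, using that $\ad x_\b$ is locally nilpotent (axiom (A3)), so that each $\phi$ reduces to a finite sum on the vector in question, together with the fact that $\bar R$ is reduced, which forces the relevant $\b$-strings to terminate after very few steps. I would organise the argument in the order (iv), (i), (ii), (iii), which is increasing order of difficulty. Part (iv) is a direct rank-one computation: from $[x_\b,h_\a]=-\b(h_\a)x_\b=-(\b,\a^\vee)x_\b$ with $(\b,\a^\vee)\in\bbbz$ by (R2), and $[x_\b,[x_\b,h_\a]]=-(\b,\a^\vee)[x_\b,x_\b]=0$, one gets $\phi(h_\a)=h_\a-(\b,\a^\vee)x_\b\in\span_\bbbz\{h_\a,x_\b\}$. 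For (i), the hypothesis $\a+\b\in R^\times$ says $\{\a,\b\}$ is a nilpotent pair, so by Proposition \ref{hump3} (built on \cite[Lemma 3.5]{AP19}) the set $R_{\a,\b}=(\bbbz\a+\bbbz\b)\cap R$ is a finite irreducible root system and $\fg_{\a,\b}$ is a finite dimensional simple Lie algebra on which $\tau$ restricts to a Chevalley automorphism and $\{x_\gamma\mid\gamma\in R_{\a,\b}^\times\}$ is a Chevalley basis. Inside $\fg_{\a,\b}$ the classical identity $\tfrac{(\ad x_\b)^k}{k!}x_\a=\pm\binom{d+k}{k}x_{\a+k\b}$ (see \cite[\S25]{Hum80}) has integer coefficients and is supported on the $\b$-string through $\a$; summing over $k$ gives (i).

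For (ii) the hypothesis $\a+\b\in R^0$ forces $\bar\b=-\bar\a$, whence $\overline{\a+2\b}=\bar\b$ and $\overline{\a+3\b}=2\bar\b\notin\bar R$, so the expansion of $\phi(x_\a)$ stops at $k=2$. The $k=1$ term is, by the very definition of the bracket symbol, $[x_\b,x_\a]=x^{-\a}_{\a+\b}$; and since $\a+\b\in R^0$ gives $-\a\equiv\b\ \mod\ \v^0$, the $k=2$ term $\tfrac12[x_\b,x^{-\a}_{\a+\b}]$ lies in $\bbbz x_{\a+2\b}$ by Lemma \ref{lemassump0}. Hence $\phi(x_\a)=x_\a+x^{-\a}_{\a+\b}+(\text{integer})\,x_{\a+2\b}$, with the last term understood to be $0$ when $\a+2\b\notin R^\times$, which is precisely the assertion.

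Part (iii) carries the real content. If $\a+\d\notin R$ then $x^\a_\d=0$ and there is nothing to prove, so assume $\a+\d\in R$ (in fact $\a+\d\in R^\times$, as $\overline{\a+\d}=\bar\a$). Since $\overline{2\b+\d}=2\bar\b\notin\bar R$ we have $[x_\b,x_{\b+\d}]=0$, so $\phi(x^\a_\d)=x^\a_\d+[x_\b,x^\a_\d]$ and it remains to show $[x_\b,x^\a_\d]\in\bbbz x_{\b+\d}$ (the term being $0$ unless $\b+\d\in R^\times$). I would split according to the images $\bar\a,\bar\b$. If $\bar\a=\pm\bar\b$, i.e.\ $\a\equiv\pm\b\ \mod\ \v^0$, then Lemma \ref{lemassump0} gives at once $[x_\b,x^\a_\d]\in2\bbbz x_{\b+\d}\sub\bbbz x_{\b+\d}$. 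If $\bar\a,\bar\b$ are linearly independent, then no nonzero element of $\bbbz\a+\bbbz\b$ is isotropic, so $\dot R_0:=(\bbbz\a+\bbbz\b)\cap R$ is a genuine finite root system containing $\a$ and $\b$; applying Lemma \ref{newlem34} to $\dot R_0$ and $\d$ (legitimate, since $\a+\d\in R$ supplies the needed $\dot\gamma+\d\in R$ with $\dot\gamma=\a$) yields an affine Lie subalgebra $\fg^a_{\dot R_0,\d}$ that contains $x_{\a+\d},x_{-\a},x_\b,x_{\b+\d}$ and on which $\tau$ restricts to a Chevalley automorphism. The whole bracket $[x_\b,[x_{\a+\d},x_{-\a}]]$ is then computed inside this affine algebra, where the restricted Chevalley system is an affine Chevalley basis with integral structure constants (\cite{Mit85}, exactly as in the proof of Lemma \ref{lemassump0}); integrality forces $[x_\b,x^\a_\d]\in\bbbz x_{\b+\d}$.

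The main obstacle is this last, independent-projection case: one must verify that the restriction of the fixed Chevalley system to $\fg^a_{\dot R_0,\d}$ genuinely is (the non-isotropic part of) an affine Chevalley basis, so that the expansion of the isotropic element $x^\a_\d=[x_{\a+\d},x_{-\a}]$ in that algebra, and hence its bracket with $x_\b$, is governed by the known integral constants of \cite{Mit85}. Once this is secured, everything else is bookkeeping with $\b$-strings together with the already-established Lemma \ref{lemassump0}.
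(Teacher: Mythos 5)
Your parts (i), (ii) and (iv), as well as the degenerate case $\a\equiv\pm\b\ (\mathrm{mod}\ \v^0)$ of (iii), are exactly the paper's argument: (i) is read off inside the finite dimensional simple algebra $\fg_{\a,\b}$ of (\ref{simple1}) via \cite[\S 25]{Hum80}, (ii) is the three-term expansion $x_\a+x^{-\a}_{\a+\b}+\tfrac12[x_\b,x^{-\a}_{\a+\b}]$ closed up by Lemma \ref{lemassump0}, and (iv) is the one-line computation $\phi(h_\a)=h_\a-\b(h_\a)x_\b$. Where you diverge is the main case of (iii), $\a\pm\b\notin\v^0$. The paper does something much lighter than your affine-subalgebra detour: it expands by the Jacobi identity,
\[
[x_\b,[x_{\a+\d},x_{-\a}]]=-[x_{-\a},[x_\b,x_{\a+\d}]]-[x_{\a+\d},[x_{-\a},x_\b]],
\]
and notes that, precisely because $\a\pm\b\notin\v^0$, every surviving inner and outer bracket is a nilpotent-pair bracket, so each of the two terms equals a product of two structure constants $c_{\cdot,\cdot}$ times $x_{\b+\d}$, and these constants are integers by Proposition \ref{hump3}(ii). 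This delivers the integrality you were trying to extract from Mitzman's affine Chevalley bases, while avoiding both of the issues your route carries: the verification (which you correctly flag as the outstanding obstacle) that the restriction of the fixed Chevalley system to $\fg^a_{\dot R_0,\d}$ really is the non-isotropic part of an affine Chevalley basis, and the applicability of Lemma \ref{newlem34} to a rank-two subsystem $\dot R_0$ that is not one of the distinguished finite root systems $\dot R$ of (\ref{eq99-1}). So your write-up is sound where it is complete, but the one step you leave open is exactly the step the paper's shorter Jacobi computation makes unnecessary; I would replace the affine detour by that computation.
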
 
  
  \proof (i) If $\a+\b\in R^\times$, namely $\{\a,\b\}$ is a nilpotent pair, then the statement is a relation inside the  finite dimensional simple Lie algebra  $\fg_{\a,\b}$ (see (\ref{simple1})). Thus the result follows from the classical finite dimensional theory, see \cite{Hum80}. 
  
  (ii) If  $\a+\b\in R^0$, then $\a=-\b+\lam$  for some $\lam\in R^0$, and so
$$\phi (x_\a)=
\sum_{k=0}^2\frac{(\ad x_\b)^k}{k!} (x_\a)=x_\a+
x_{\lam}^{-\a}+\frac{1}{2}[x_\b,x_{\lam}^{-\a}].$$
Now the result follows from Lemma \ref{lemassump0}.

(iii) If $\a+\b\in\v^0$ or $\a-\b\in\v^0$, then we are done by Lemma \ref{lemassump0}. Assume now that $\a\pm\b\not\in \v^0$.
Then $\phi(x^\a_\d)=x_\d^\a+[x_\b,x_\d^\a]$. Moreover,
\begin{equation}\label{eq17}
[x_\b,x^\a_\d]=[x_\b,[x_{\a+\d},x_{-\a}]]=-\stackrel{A}{\overbrace{[x_{-\a},[x_\b,x_{\a+\d}]]}}-\stackrel{B}{\overbrace{[x_{\a+\d}
,[x_{-\a},x_\b]]}}.
\end{equation}
If $A\not=0$, then
\begin{equation}\label{eq18}
A=[x_{-\a},c_{\b,\a+\d}x_{\b+\a+\d}]=c_{\b,\a+\d}c_{-\a,\b+\a+\d}
x_{\b+\d},
\end{equation}
and if $B\not=0$, then
\begin{equation}\label{eq19}
B=[x_{\a+\d},c_{-\a,\b}x_{-\a+\b}]=c_{\a+\d,-\a+\b}c_{-\a,\b}x_{\b+\d}
\end{equation}
as required. 

(iv) We have $\phi(h_\a)=h_\a+
[x_\b,h_\a]=h_\a-\b(h_\a)x_\b$.
% So
%$\phi([x_\a,x_{-\a}])=[x_\a,x_{-\a}]+\frac{2(\b,\a)}{(\a,\a)}x_\b.$

%(v) Since  $[\fg_\d,\fg_{-\d}]=\bbbc t_\d$ and $\phi(t_\d)=t_\d$, the result is clear.
\qed

We set
$$\fg_c^\bbbz:=\span_\bbbz\{x_\a,x_\d^\a, h_\a\mid\a\in R^\times,0\not=\d\in R^0,\a+\d\in R\}.$$
We note that 
$$\fg_c\cong\bbbc\otimes_\bbbz\fg_c^\bbbz,$$
as isomorphism of vector spaces.

\begin{cor}\label{co1-1}
For each $\b\in R^\times$,
the automorphism $\Phi_\b$ stabilizes $\fg^\bbbz_c$.
\end{cor}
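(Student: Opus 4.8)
The plan is to read off $\Phi_\b$ as a composite of three exponential automorphisms and to reduce everything to Lemma \ref{lemassump2}. Writing the three factors of $\Phi_\b$ as $\exp(\ad y)$ with $y$ a signed Chevalley vector attached to $\pm\b$ (so $y\in\{x_\b,-x_\b,x_{-\b},-x_{-\b}\}$, the middle factor being the one built from a negative root vector), each factor makes sense because $\pm\b\in R^\times$ and axiom (A3) forces $\ad x_{\pm\b}$ to act locally nilpotently; on any fixed vector such an exponential agrees with the finite truncation $\phi=\sum_{k=0}^{n}(\ad x_{\pm\b})^k/k!$ of Lemma \ref{lemassump2} once $n$ exceeds the nilpotency degree. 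Since $\Phi_\b$ is $\bbbc$-linear, hence $\bbbz$-linear, and $\fg_c^\bbbz$ is by definition the $\bbbz$-span of the generators $x_\a$, $x_\d^\a$, $h_\a$, it suffices to show that each single factor sends this generating set into $\fg_c^\bbbz$; the composite then stabilizes $\fg_c^\bbbz$.

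First I would treat the factors $\exp(\ad x_\b)$ and $\exp(\ad x_{-\b})$ directly. For the generators $x_\d^\a$ and $h_\a$, and for $x_\a$ when $\a\pm\b\in R^\times$ or $\a\pm\b\in R^0$, the required conclusion is exactly parts (iii), (iv), (i), (ii) of Lemma \ref{lemassump2} (applied with root $\b$ or $-\b$), each of which lands in $\span_\bbbz$ of elements of the generating set. The only remaining cases for $x_\a$ are when $\a\pm\b$ is not a root: if it is nonzero then $[\fg_{\pm\b},\fg_\a]\sub\fg_{\a\pm\b}=\{0\}$, so $\exp(\ad x_{\pm\b})(x_\a)=x_\a$; and if $\a=\mp\b$ the standard rank-one relation inside the subalgebra spanned by $x_{\pm\b},h_\b$ gives, e.g., $\exp(\ad x_\b)(x_{-\b})=x_{-\b}+h_\b-x_\b\in\fg_c^\bbbz$. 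Thus $\exp(\ad x_{\pm\b})(\fg_c^\bbbz)\sub\fg_c^\bbbz$.

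The more delicate factors are the ones carrying a minus sign, $\exp(-\ad x_\b)$ and $\exp(-\ad x_{-\b})$, and this is where the main bookkeeping lies. I would handle them by passing to the Chevalley system $\mathcal C'$ obtained from $\mathcal C$ by replacing $x_{\pm\b}$ with $-x_{\pm\b}$ and leaving every other $x_\gamma$ unchanged: one checks directly that $[-x_\b,-x_{-\b}]=h_\b$ and $\tau(-x_\b)=x_{-\b}=-(-x_{-\b})$, so $\mathcal C'$ again satisfies the two conditions of Definition \ref{chevalley-sys}. Because every vector of $\mathcal C'$ differs from the corresponding vector of $\mathcal C$ only by a sign, the associated generators $x'_\gamma$, $(x')_\d^\gamma$, $h'_\gamma=h_\gamma$ have the same $\bbbz$-span; that is, the $\bbbz$-form built from $\mathcal C'$ is again $\fg_c^\bbbz$. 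Since Lemmas \ref{lemassump0} and \ref{lemassump2} were proved using only the defining properties of a Chevalley system, they apply verbatim to $\mathcal C'$, and the previous paragraph read for $\mathcal C'$ shows that $\exp(\ad x'_{\pm\b})=\exp(-\ad x_{\pm\b})$ maps $\fg_c^\bbbz$ into itself. (Equivalently, one observes that each individual term $(\ad x_{\pm\b})^k(v)/k!$ is already integral, so the sign change $k\mapsto(-1)^k$ produced by the minus sign preserves integrality.)

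Finally I would assemble the factors: each maps $\fg_c^\bbbz$ into $\fg_c^\bbbz$, so $\Phi_\b(\fg_c^\bbbz)\sub\fg_c^\bbbz$. Applying the same reasoning to $\Phi_\b^{-1}$, which is again a composite of factors of exactly the same two types, gives $\Phi_\b^{-1}(\fg_c^\bbbz)\sub\fg_c^\bbbz$, whence the equality $\Phi_\b(\fg_c^\bbbz)=\fg_c^\bbbz$. I expect the only genuine obstacle to be the sign reversal in the minus-sign factors, namely verifying that it does not spoil integrality; the reduction to the twisted Chevalley system $\mathcal C'$ (or, alternatively, the term-by-term integrality already implicit in the proof of Lemma \ref{lemassump2}) resolves this cleanly.
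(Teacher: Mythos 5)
Your proof is correct and follows the same route as the paper, whose own proof consists of the single remark that the corollary is immediate from Lemma \ref{lemassump2}. You supply the details the paper leaves implicit --- the degenerate cases $\a\pm\b\notin R\cup\{0\}$ and $\a=\mp\b$, and the sign issue in the factor $\exp(-\ad x_{-\b})$, which you resolve correctly either via the sign-twisted Chevalley system or via term-by-term integrality of $(\ad x_{\pm\b})^k(v)/k!$.
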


\proof It immediately follows from Lemma \ref{lemassump2}.\qed

\begin{pro}\label{protemp2} 
Let $\Pi$ be a reflectable base for $R$.

(i) %For $\b\in R^\times$ and $\d\in R^0$,  $x_\d^\b\in\la x_\d^\a\mid \a\in\Pi,\a+\d\in R\ra$. In particular,  
For $0\not=\d\in R^0$, 
$$\fg_\d\cap\fg_c^\bbbz=\span_\bbbz\{ x_\d^\b\mid \b\in\Pi,\b+\d\in R\}\andd \dim_\bbbc (\fg_\d\cap\fg_c)\leq |\Pi|.$$

(ii) $\fg_c^\bbbz=\span_\bbbz\{x_\a,t_\b,x_\d^\b\mid\a\in R^\times,\d\in R^0,\b\in\Pi\}.$
\end{pro}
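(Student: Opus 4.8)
The plan is to handle both parts by first invoking the root-space grading and then using the reflectable base $\Pi$ together with the reflection automorphisms $\Phi_\gamma$ to shrink the (a priori infinite) index set $R^\times$ down to $\Pi$. First I would record that every $\bbbz$-generator in the definition of $\fg_c^\bbbz$ is a root vector, so $\fg_c^\bbbz$ is $\la R\ra$-graded; hence for $0\neq\d\in R^0$ the only generators lying in $\fg_\d$ are the $x_\d^\a$, giving $\fg_\d\cap\fg_c^\bbbz=\span_\bbbz\{x_\d^\a\mid \a\in R^\times,\ \a+\d\in R\}$, and likewise $\fg_c^\bbbz\cap\fh=\span_\bbbz\{h_\a\mid\a\in R^\times\}$. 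In both asserted identities the inclusion $\supseteq$ is then immediate (for (ii), note $t_\b=\tfrac{(\b,\b)}{2}h_\b$ with $\tfrac{(\b,\b)}{2}\in\{1,2,3\}$, so $t_\b\in\fg_c^\bbbz$); the content is the reverse reduction to $\b\in\Pi$.

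Next I would set up the engine. Since $\d\in\v^0$ we have $(\d,\gamma)=0$, so $w_\gamma(\d)=\d$ and $\Phi_\gamma(\fg_\d)=\fg_\d$ for every $\gamma\in R^\times$; together with Corollary \ref{co1-1} this makes each $\Phi_\gamma$ a $\bbbz$-linear automorphism of $\fg_\d\cap\fg_c^\bbbz$. Moreover $\Phi_\gamma(x_\d^\a)=[\Phi_\gamma(x_{\a+\d}),\Phi_\gamma(x_{-\a})]$ lies in the one-dimensional space $[\fg_{w_\gamma(\a)+\d},\fg_{-w_\gamma(\a)}]=\bbbc\,x_\d^{w_\gamma(\a)}$; restricting to the affine subalgebra $\fg^a_{\rd,\d}$ of Lemma \ref{newlem34}, on which the Chevalley system restricts to a Chevalley basis and $\Phi_\gamma$ acts on root vectors by $\pm 1$, pins the scalar, so $\Phi_\gamma(x_\d^\a)=\pm x_\d^{w_\gamma(\a)}$. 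At the level of $\bbbc$-spans this already closes the argument: telescoping Lemma \ref{tec1}(i) along the $\gamma$-string $\b,\b-\gamma,\dots,w_\gamma(\b)$ (and using $\bbbc x_\d^{-\gamma}=\bbbc x_\d^\gamma$) shows that $\span_\bbbc\{x_\d^\b\mid\b\in\Pi\}$ is stable under each $\Phi_\gamma$ with $\gamma\in\Pi$, hence under $\w_\Pi$; since $\w_\Pi\Pi=R^\times$ this span is all of $\fg_\d\cap\fg_c$, which yields the bound $\dim_\bbbc(\fg_\d\cap\fg_c)\leq|\Pi|$.

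The integral upgrade is where I expect the main obstacle. Writing $M:=\span_\bbbz\{x_\d^\b\mid\b\in\Pi,\ \b+\d\in R\}$, the goal is to prove $\Phi_\gamma(M)\subseteq M$ for every $\gamma\in\Pi$; granting this, $M$ is $\w_\Pi$-stable, and for arbitrary $\a=w(\b_0)\in R^\times$ with $w\in\w_\Pi$, $\b_0\in\Pi$, one gets $x_\d^\a=\pm\Phi_w(x_\d^{\b_0})\in M$, finishing (i). The difficulty is precisely the closure $\Phi_\gamma(x_\d^\b)=\pm x_\d^{w_\gamma(\b)}\in M$: the telescoping over $\bbbc$ expresses $x_\d^{w_\gamma(\b)}$ through $x_\d^\b,x_\d^\gamma$ but only with a priori rational coefficients coming from the structure constants $c_{\cdot,\cdot}$. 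To force these into $\bbbz$ I would rerun the $\gamma$-string reduction using that the relevant $c_{\cdot,\cdot}$ equal $\pm(d+1)\in\bbbz$ (Proposition \ref{hump3}) and, in the isotropic directions, that the ``affine-type'' members of $\Pi$ reflect a single $\Pi$-vector into an integral combination of $\Pi$-vectors. This last point is a finite, type-by-type verification relying on the explicit reflectable bases of Table \ref{tab11}, and it is the only place where the particular $\Pi$ (rather than an arbitrary reflectable base) is needed.

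Finally, for (ii) I would assemble the pieces. For each fixed $\d$, part (i) replaces the generators $x_\d^\a$ by $\{x_\d^\b\mid\b\in\Pi\}$, while the $x_\a$ already appear in the asserted list; it remains to reduce the Cartan part $\span_\bbbz\{h_\a\}$. Here the coroots reflect integrally, $h_{w_\gamma(\a)}=h_\a-\la\gamma,\a^\vee\ra h_\gamma$ with $\la\gamma,\a^\vee\ra\in\bbbz$ by (R2), so iterating over $\w_\Pi$ reduces $\span_\bbbz\{h_\a\}$ to the $\bbbz$-span of the finitely many $h_\b$, $\b\in\Pi$; matching this with $\span_\bbbz\{t_\b\mid\b\in\Pi\}$ then uses the normalization $(\a,\a)=2$ on short roots together with the explicit form of $\Pi$. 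Combining the three reductions gives the stated spanning set.
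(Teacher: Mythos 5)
The skeleton of your argument---grade $\fg_c^\bbbz$ by root spaces, use $\w_\Pi\Pi=R^\times$ and the operators $\Phi_\gamma$ to pull every $x_\d^\a$ back to $\Pi$, and reduce the Cartan part via integral coroot reflections---is the same as the paper's, and your $\bbbc$-level argument and the outline of (ii) are essentially sound. But the step you yourself flag as ``the main obstacle,'' namely $\Phi_\gamma(x_\d^\b)\in\span_\bbbz\{x_\d^{\b'}\mid\b'\in\Pi\}$, \emph{is} the content of part (i) at the integral level, and your proposal does not close it. The strategy you sketch (rerun the string telescoping with integral structure constants plus a ``finite, type-by-type verification relying on the explicit reflectable bases of Table \ref{tab11}'') is both left uncarried-out and mismatched to the statement: Proposition \ref{protemp2} is asserted for an \emph{arbitrary} reflectable base $\Pi$, so an argument that needs the particular bases of Table \ref{tab11} cannot prove it. Moreover Lemma \ref{tec1}(i) is only a statement about $\bbbc$-spans, so telescoping it cannot by itself produce integer coefficients; the same objection applies to your final appeal to ``the explicit form of $\Pi$'' when matching $\span_\bbbz\{h_\a\}$ with $\span_\bbbz\{t_\b\mid\b\in\Pi\}$ in (ii).

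The paper closes the gap with no case analysis by deploying Lemma \ref{lemassump2} at exactly this point --- a lemma you invoke only indirectly through Corollary \ref{co1-1}. Writing $\Phi_\gamma$ as a product of three truncated exponentials $\phi$, Lemma \ref{lemassump2}(iii) gives $\phi(x_\d^\a)\in\span_\bbbz\{x_\d^\a,x_{\gamma+\d}\}$, and Lemma \ref{lemassump2}(ii) (applied to the stray non-isotropic vectors $x_{\pm\gamma+\d}$, noting $x_\d^{-\gamma-\d}=-x_\d^{\gamma}$) keeps everything in the $\bbbz$-span of $x_\d^\a$, $x_\d^\gamma$ and vectors $x_{\pm\gamma+\d}$; since $\Phi_\gamma$ preserves $\fg_\d$, the non-isotropic components cancel and one obtains $\Phi_\gamma(x_\d^\a)\in\span_\bbbz\{x_\d^\a,x_\d^\gamma\}$ uniformly in the type of $R$ and for every $\gamma\in R^\times$. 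Iterating this along $\a=w_{\b_1}\cdots w_{\b_{k-1}}(\b_k)$, with $x_\d^\a=\pm\Phi_{\b_1}\cdots\Phi_{\b_{k-1}}(x_\d^{\b_k})$, puts $x_\d^\a$ in $\span_\bbbz\{x_\d^{\b_j}\mid 1\le j\le k\}$. In short, the integrality you are missing is not a new type-by-type computation: it is already packaged in Lemma \ref{lemassump2} (whose own proof is where Lemma \ref{lemassump0} and the $2\bbbz$ divisibility do the real work), and using it keeps the proof valid for an arbitrary reflectable base, as the statement requires.
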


\proof (i) We must show that for $\a\in R^\times$,  $x_\d^\a\in\span_\bbbz\{x_\d^\b\mid\b\in\Pi\}$. 
Now for $\a\in R^\times$, we have $\a=w_{\b_1}\cdots w_{\b_{k-1}}(\b_k)$ for some $\b_1,\ldots,\b_k\in\Pi$. Then taking $w=w_{\b_1}\cdots w_{\b_{k-1}}$, we get
$$x_\d^\a=x_\d^{w(\b_k)}\in\bbbz\Phi_{\b_1}\cdots\Phi_{\b_{k-1}}(x_\d^{\b_{k}}).$$ 
Then the fact that automorphisms $\Phi_{\b_i}$ preserve the root space $\fg_\d$ together with Lemma \ref{lemassump2} gives the result.

(ii) Since $\la\Pi\ra=\la R\ra$, for $\a\in R^\times$, we have
$ t_\a\in\la t_\b\mid\b\in\Pi\ra$. This together with (i) now completes the proof.  \qed

\begin{rem}\label{remtemp1}
Since reflectable bases are characterized in \cite{AYY12,ASTY19}, Proposition \ref{protemp2} provides an effective way of computing dimensions of isotopic root spaces of the core; compare with \cite[Proposition 1.4.2]{ABFP09} which gives an upper bound for the dimension of the isotropic root spaces of the core modulo center.
\end{rem}

\begin{thm}\label{thmnew5}
Let $(\fg,\fm,\fh)$ be a tame reduced extended affine Lie algebra with root system $R$ of rank $>1$, admitting a Chevalley automorphism $\tau$. Then $\fg_c^\bbbz$ is a $\bbbz$-Lie algebra. In fact $\fg_c^\bbbz$ is a $\bbbz$-form of $\fg_c$, generated as a $\bbbz$-Lie algebra by {the }{Chevalley system $\mathcal{C}=\{x_{\a}\in\fg_\a\mid\a\in R^\times\}$.}
\end{thm}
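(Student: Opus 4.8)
The plan is to establish the three assertions in turn: closure of $\fg_c^\bbbz$ under the bracket (so that it is a $\bbbz$-Lie algebra), the $\bbbz$-form property, and generation by the Chevalley system $\mathcal{C}$. Essentially all of the content is in closure, and the crucial point is that the integrality of every structure constant that can occur has already been packaged into the preceding results: Proposition \ref{hump3}(ii) controls nilpotent pairs, while Lemmas \ref{lemassump0} and \ref{lemassump2} (which is where the hypothesis $\mathrm{rank}\,\fg>1$ is actually used) control every bracket involving an isotropic root vector. I would therefore reduce closure to checking that the bracket of any two of the three families of spanning generators $x_\a$, $x_\d^\a$, $h_\a$ lands back in $\fg_c^\bbbz$, and proceed by a case analysis on these types.

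First I would clear away the routine pairs. The brackets against a Cartan generator are diagonal: $[h_\a,h_\b]=0$, $[h_\a,x_\b]=(\b,\a^\vee)x_\b\in\bbbz x_\b$ by axiom (R2), and $[h_\b,x_\d^\a]=(\d,\b^\vee)x_\d^\a=0$ because $R^0\sub\v^0$ and the form is zero on the radical. For $[x_\a,x_\b]$ with $\a,\b\in R^\times$ I would split into the subcases $\a+\b=0$ (giving $h_\a$), $\a+\b\in R^\times$ (a nilpotent pair, with integer constant $c_{\a,\b}$ from Proposition \ref{hump3}(ii)), $\a+\b=\lam\in R^0\setminus\{0\}$ (which the definition of $x_\d^\a$ rewrites as $-x_\lam^{-\a}$), and $\a+\b\notin R$ (giving $0$). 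Finally the mixed bracket $[x_\a,x_\d^\b]$ is the degree-one part of the map $\phi$ in Lemma \ref{lemassump2}(iii), so it lies in $\bbbz x_{\a+\d}$, with $\a+\d\in R^\times$ whenever $\a+\d\in R$ since $\d\in\v^0$.

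The one substantial case, and the step I expect to be the main obstacle, is the bracket of two isotropic generators $[x_\d^\a,x_\eta^\b]$, where no ambient finite-dimensional simple subalgebra is directly available. My plan is to expand the second factor via its definition $x_\eta^\b=[x_{\b+\eta},x_{-\b}]$ and apply the Jacobi identity,
\[
[x_\d^\a,x_\eta^\b]=[[x_\d^\a,x_{\b+\eta}],x_{-\b}]+[x_{\b+\eta},[x_\d^\a,x_{-\b}]],
\]
and then feed the inner brackets $[x_\d^\a,x_{\b+\eta}]\in\bbbz x_{\b+\eta+\d}$ and $[x_\d^\a,x_{-\b}]\in\bbbz x_{-\b+\d}$ back into the mixed case just settled. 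Bracketing these against $x_{-\b}$ and $x_{\b+\eta}$ then returns integer multiples of $x_{\eta+\d}^{\b}$ and $x_{\eta+\d}^{-(\b+\eta)}$, both of which lie in $\fg_c^\bbbz$ (with the convention $x_0^\gamma=h_\gamma$ covering the collapse $\d+\eta=0$). The care required here is purely bookkeeping: tracking that each reduction preserves integrality of the coefficients and that the isotropic degree $\d+\eta$, including the possibility $\d+\eta=0$, is handled correctly. This completes closure, hence $\fg_c^\bbbz$ is a $\bbbz$-Lie algebra.

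For the remaining two assertions I would argue quickly. The three families of generators span $\fg_c$ over $\bbbc$: the non-isotropic spaces are the lines $\bbbc x_\a$, the isotropic spaces $\fg_c\cap\fg_\d$ are $\bbbc$-spanned by the $x_\d^\b$ (Lemma \ref{tec1}(ii)), and $\fg_c\cap\fh=\sum_\b\bbbc h_\b$ by Lemma \ref{oldlem}(iii); since $\fg_c^\bbbz$ is $\bbbz$-graded with a finitely generated free module in each degree, this realizes the identification $\fg_c\cong\bbbc\otimes_\bbbz\fg_c^\bbbz$ already noted, so $\fg_c^\bbbz$ is a $\bbbz$-form. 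Finally, every defining generator is patently an integer bracket of members of $\mathcal{C}$, namely $h_\a=[x_\a,x_{-\a}]$ and $x_\d^\a=[x_{\a+\d},x_{-\a}]$ with $x_{\a+\d},x_{-\a}\in\mathcal{C}$; combined with $\mathcal{C}\sub\fg_c^\bbbz$ and the closure proved above, this identifies the $\bbbz$-Lie subalgebra generated by $\mathcal{C}$ with $\fg_c^\bbbz$.
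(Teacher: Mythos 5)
Your proposal is correct and follows essentially the same route as the paper: reduce everything to bracket-closure of the spanning set, dispose of the routine cases via Proposition \ref{hump3}(ii) and Lemmas \ref{lemassump0}--\ref{lemassump2}, and settle the crucial case $[x_\d^\a,x_\eta^\b]$ by the Jacobi identity together with Lemma \ref{lemassump2}(iii), exactly as in the paper's proof (you expand the second factor where the paper expands the first, and you work with the full defining spanning set where the paper first passes to a reflectable base via Proposition \ref{protemp2}, but these are cosmetic differences). Your explicit treatment of the degenerate case $\d+\eta=0$ via the convention $x_0^\gamma=h_\gamma$ is a small point the paper leaves implicit.
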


\proof By Proposition \ref{protemp2}(ii), $\fg_c^\bbbz=\span_{\bbbz}\{ x_\a,t_\b,x_\d^\b\mid\a\in R^\times,\d\in R^0,\b\in\Pi\}.$
Thus considering Lemma \ref{lemassump2}, we only need to show that $[x_\d^\a,x_\eta^\b]\in\fg_c^\bbbz$ for $\a,\b\in R^\times$ and $\d,\eta\in R^0$. Now
\begin{eqnarray*}
  [x_\d^\a,x_\eta^\b]&=&
  [[x_{\a+\d},x_{-\a}],x_\eta^\b]\\
  &=&-[[x_\eta^\b,x_{\a+\d}], x_{-\a}]-
[[x_{-\a},x_{\eta}^\b], x_{\a+\d}]\\
(\hbox{Lemma \ref{lemassump2}(iii)})&=&
k[x_{\a+\d+\eta},x_{-\a}]+k'[x_{-\a+\eta},x_{\a+\d}]\\
&=&kx_{\d+\eta}^\a+k'x_{\d+\eta}^{-\a-\d}\\
(\hbox{Proposition \ref{protemp2}}(i))&\in&\fg_c^\bbbz.
\end{eqnarray*}
\qed

%\red{\begin{pro}\label{isom} 
%Assume that $\{x_\a\}_{\a\in R^\times}$ and
%$\{x'_\a\}_{\a\in R^\times}$ are two choices of integral root vectors
%as above. Then $\fg_c^\bbbz(x_\a\mid\a\in R^\times)\cong
%\fg_c^\bbbz(x'_\a\mid\a\in R^\times),$ as Lie algebras over $\bbbz$.
%\end{pro}
%}

\begin{thm}\label{thmnew1}({\bf Integral Structure})
Let $(\fg,\fm,\fh)$ be a tame reduced extended affine Lie algebra of rank $>1$ with root system $R$,  admitting a Chevalley automorphism $\tau$. Let $\dot\Pi=\{\a_1,\ldots,\a_\ell\}$ be a base of $\rd$, and fix a reflectable base $\Pi$ for $R$ such that $\dot\Pi\sub \Pi$.  Then, 
$\fg_c$ admits a basis, 
$${\mathcal B}=\{x_\a\in\fg_\a\mid\a\in R^\times\}\cup
\{h_i,c_j\in\fh\mid 1\leq i\leq\ell, 1\leq j\leq\nu\}\cup\big(\bigcup_{\d\in R^0{\setminus\{0\}}}{\mathcal X}_\d\big),$$
where ${\mathcal X}_\d\sub\{x^\a_\d\mid\a\in \Pi,\;\a+\d\in R\}$ is a basis of $\fg_\d\cap\fg_c$ such that
%{x_\d^1,\ldots, x_\d^{k_\d}\}\sub\fg_\d$\todo{should be explained how we choose them} with
%$k_\d=\dim(\fg_\d\cap\fg_c)$,
%such that

(i) $[c_i,\fg]=0,$ for all $i$,

(ii) $[h_i, h_j]=0$, for all $i,j$,

(iii) $[h_i, x_\a]=(\a,\a_i^\vee)x_\a,$ $\a\in R$, $1\leq i\leq\ell$,

%- $[d_i,x_\a]=\a(d_i)x_\a$, with $\a(d_i)\in\bbbz$.

(iv) $[x_\a,x_{-\a}]\in\sum_{i=1}^\ell\bbbz h_i+\sum_{j=1}^\nu\bbbz c_j,$ $\a\in R^\times,$

(v) $[x_\a,x_\b]=c_{\a,\b}x_{\a+\b}$, where
$c_{\a,\b}=0$ if $\a+\b\not\in R$, and if $\{\a,\b\}$ is nilpotent then 
\begin{equation}\label{form2}
c_{\a,\b}=-c_{-\a,-\b}\andd
c_{\a,\b}^2=\frac{u(d+1)(\a+\b,\a+\b)}{(\b,\b)}.
\end{equation}
 In particular,
$c_{\a,\b}\in\bbbz$.

(vi)  $[x_\a,x_\d]\in\bbbz x_{\a+\d}$, $\a\in R^\times$, $\d\in R^0$ and $x_\d\in\xx_\d$,
%$x_\d\in{\mathcal X}_\d$, $\d\in R^0$, where $N=\prod|c_{\a,\b}|$, product runs over the finite set $\{c_{\a,\b}\mid
%\{\a,\b\}\hbox{ a nilpotent pair}\}$.

(vii)  $[x_\b,x_{-\b+\d}],[x_\eta,x_\mu]\in\span_\bbbz\{x^\a_{\d}\mid
\a\in\Pi,\;\a+\d\in R\}$, $\b\in R^\times$,  $\eta,\mu\in R^0$, $x_\eta\in\fg_\eta$, $x_\mu\in\fg_\mu$, $\eta+\mu=\d$.
%\frac{1}{M}\span_{\bbbz}\xx_{\d+\mu},$ for some constant integer $M$, for all $\a,\b\in R^\times$, $\d,\mu\in R^0$.}

%(viii) $[x_\a,x_{-\a+\d}]=?$
\end{thm}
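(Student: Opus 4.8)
The plan is to extract the basis $\mathcal{B}$ and all of the relations (i)--(vii) directly from the $\bbbz$-form $\fg_c^\bbbz$ constructed in Theorem \ref{thmnew5}, using that $\fg_c^\bbbz$ is an $R$-graded $\bbbz$-Lie algebra whose homogeneous component in each non-isotropic root space $\fg_\gamma$ ($\gamma\in R^\times$) is exactly $\bbbz x_\gamma$, and whose component in each isotropic space $\fg_\d$ is the span given in Proposition \ref{protemp2}(i). First I would take the $x_\a$ ($\a\in R^\times$) to be the fixed Chevalley system $\mathcal{C}$, set $h_i:=h_{\a_i}=2t_{\a_i}/(\a_i,\a_i)$ for $\a_i\in\dot\Pi$ (note $\dot\Pi\sub\rd\sub R^\times$ since the semilattices contain $0$), and for each $0\neq\d\in R^0$ extract $\mathcal{X}_\d$ as a $\bbbc$-basis from the spanning set $\{x_\d^\a\mid\a\in\Pi,\ \a+\d\in R\}$ of $\fg_\d\cap\fg_c$ provided by Proposition \ref{protemp2}(i). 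The only genuinely new objects are the central elements $c_1,\dots,c_\nu$, produced in the next step.

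The heart of the argument is the toral part, and I expect this to be the main obstacle. Because $\fg_c^\bbbz$ is graded, $\mathcal{T}:=\fg_c^\bbbz\cap\fh=\span_\bbbz\{h_\a\mid\a\in R^\times\}$ is a $\bbbz$-form of $\fg_c\cap\fh=\span_\bbbc\{t_\a\mid\a\in R^\times\}$, which has dimension $\dim\v=\ell+\nu$; hence $\mathcal{T}$ is free of rank $\ell+\nu$. Put $\mathfrak{z}:=t(\v^0)$ and $\dot\fh:=t(\dot\v)$, so $\fg_c\cap\fh=\dot\fh\oplus\mathfrak{z}$, and let $p\colon\fg_c\cap\fh\to\dot\fh$ be the projection along $\mathfrak{z}$. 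Writing $\a=\dot\a+\eta$ with $\dot\a\in\rd$, $\eta\in R^0$, and using $(\a,\a)=(\dot\a,\dot\a)$ gives $h_\a=h_{\dot\a}+2t_\eta/(\dot\a,\dot\a)$ with $t_{\dot\a}\in\dot\fh$ and $t_\eta\in\mathfrak{z}$, so $p(h_\a)=h_{\dot\a}=\dot\a^\vee$ and $p(h_i)=\dot\a_i^\vee$. As $\dot\a$ runs over $\rd$, the $\dot\a^\vee$ generate the coroot lattice $Q^\vee=\sum_i\bbbz\dot\a_i^\vee$, so $p(\mathcal{T})=Q^\vee$ is free of rank $\ell$ with basis $\{p(h_i)\}$. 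The short exact sequence $0\to\mathcal{T}\cap\mathfrak{z}\to\mathcal{T}\xrightarrow{p}Q^\vee\to0$ therefore splits via $\dot\a_i^\vee\mapsto h_i$, giving $\mathcal{T}=\bigoplus_i\bbbz h_i\oplus\mathcal{T}_0$ with $\mathcal{T}_0:=\mathcal{T}\cap\mathfrak{z}$ free of rank $\nu$. I would then let $c_1,\dots,c_\nu$ be any $\bbbz$-basis of $\mathcal{T}_0$. Since $\v^0$ is the radical of the form, $(\mu,\gamma)=0$ for $\mu\in\v^0$, $\gamma\in\v$, so every element of $\mathfrak{z}$ is central in $\fg$; this yields (i), while the decomposition of $\mathcal{T}$ yields (iv) and shows $\{h_i,c_j\}$ is a $\bbbc$-basis of $\fg_c\cap\fh$. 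Verifying that $\mathcal{T}_0$ has rank exactly $\nu$ (rather than merely containing $\sum\bbbz t_{\sg_j}$) is where the interaction of short and long roots with the semilattices $S,L$ and the factor $k$ of (\ref{ned1}) enters: differences of long-root generators contribute the extra vectors $t_{\sigma_j}/k$ in the untwisted directions, and the clean way to absorb these is simply to take $c_j$ to be a basis of $\mathcal{T}_0$ rather than the naive $t_{\sg_j}$.

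Combining the three graded pieces $\{x_\a\mid\a\in R^\times\}$, $\{h_i,c_j\}$ and $\bigcup_\d\mathcal{X}_\d$ and using the root-space decomposition of $\fg_c$ now shows $\mathcal{B}$ is a basis. The surviving relations are then bookkeeping on top of earlier results. Relation (ii) holds as $\fh$ is abelian; (iii) is the computation $\a(h_{\a_i})=(\a,\a_i^\vee)$ together with axiom (R2) (with both sides $0$ when $\a\in R^0$). Relation (v) is exactly Proposition \ref{hump3}(ii), which holds for any Chevalley system, once one observes $[x_\a,x_\b]\in\fg_{\a+\b}=\{0\}$ when $\a+\b\notin R$, and that for $\a,\b\in R^\times$ the sum $\a+\b$ is non-isotropic precisely when it lies in $R^\times$, the case $\a+\b\in R^0$ being deferred to (vii).

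For the integrality statements I would invoke the $\bbbz$-Lie-algebra closure of Theorem \ref{thmnew5}. In (vi), both $x_\a$ and $x_\d$ lie in $\fg_c^\bbbz$, so $[x_\a,x_\d]\in\fg_c^\bbbz\cap\fg_{\a+\d}$; since $(\a+\d,\a+\d)=(\a,\a)\neq0$, either $\a+\d\notin R$ (and the bracket is $0$) or $\a+\d\in R^\times$, in which case $\fg_{\a+\d}$ is one-dimensional and $\fg_c^\bbbz\cap\fg_{\a+\d}=\bbbz x_{\a+\d}$, giving $[x_\a,x_\d]\in\bbbz x_{\a+\d}$. In (vii), the brackets $[x_\b,x_{-\b+\d}]$ and $[x_\eta,x_\mu]$ all lie in $\fg_c^\bbbz\cap\fg_\d$, which by Proposition \ref{protemp2}(i) equals $\span_\bbbz\{x_\d^\a\mid\a\in\Pi,\ \a+\d\in R\}$. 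Thus, apart from the toral lattice computation of the second paragraph, every clause of the theorem is a direct consequence of Theorem \ref{thmnew5} and Propositions \ref{hump3} and \ref{protemp2}.
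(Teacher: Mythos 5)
Your proposal is correct and, for clauses (i)--(iii) and (v)--(vii), follows the paper's own route: take the fixed Chevalley system for the non-isotropic spaces, extract $\mathcal{X}_\d$ from the spanning set of Proposition \ref{protemp2}(i), and read off the integrality statements from Proposition \ref{hump3} together with the fact that $\fg_c^\bbbz$ is a graded $\bbbz$-subalgebra (Theorem \ref{thmnew5}), whose homogeneous piece in $\fg_{\a+\d}$ is $\bbbz x_{\a+\d}$ and in $\fg_\d$ is $\span_\bbbz\{x^\a_\d\mid\a\in\Pi,\ \a+\d\in R\}$.

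Where you genuinely diverge is in the toral part, which is the only piece of the theorem not already contained in the earlier results. The paper chooses the central elements explicitly --- $c_j=t_{\sg_j}$ for $1\le j\le t$ and $c_j=2t_{\sg_j}/(\b,\b)$ for $t<j\le\nu$ with $\b$ of maximal length --- and verifies (iv) by a direct computation of $2t_\d/(\a,\a)$ using the decomposition (\ref{ned1}) and the scalars $k_\a$. You instead set $\mathcal{T}=\fg_c^\bbbz\cap\fh$, project onto $\dot\fh$ to hit the coroot lattice $Q^\vee$, split the resulting exact sequence by $\dot\a_i^\vee\mapsto h_i$, and take the $c_j$ to be \emph{any} $\bbbz$-basis of $\mathcal{T}_0=\mathcal{T}\cap t(\v^0)$. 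This buys you something the paper's proof does not state: $\{h_i,c_j\}$ is an actual $\bbbz$-basis of $\fg_c^\bbbz\cap\fh$, and (iv) becomes tautological. The price is that you must justify freeness honestly: the inference ``$\mathcal{T}$ spans an $(\ell+\nu)$-dimensional space, hence is free of rank $\ell+\nu$'' is not valid for an arbitrary subgroup of a vector space (cf.\ $\bbbz[1/2]\subset\bbbq$). You need first that $\mathcal{T}$ sits inside a lattice, which follows because $(\a,\a)$ takes only the two values $2$ and $2k$, so $h_\a=h_{\dot\a}+2t_\eta/(\a,\a)\in Q^\vee\oplus \tfrac{1}{k}t(\Lam)$; your closing remarks about the factor $k$ and the vectors $t_{\sg_j}/k$ show you see this, but the freeness claim should come after, not before, that containment. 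With that reordering the argument is complete, and it is in fact a slightly cleaner packaging of the same semilattice bookkeeping the paper does by hand.
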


\proof {Let $\mathcal{C}=\{x_{\a}\in\fg_\a\mid\a\in R^\times\}$ be a Chevalley system for $(\fg,\tau)$.} Since non-isotropic root spaces are $1$-dimensional, {the system} {$\mathcal{C}$ forms} a basis for the subspace of $\fg$ consisting of non-isotropic root spaces. We next find a basis for $\fg_c\cap\fg_0$. 
We know that 
 $\fg_0\cap\fg_c$ is spanned by $[\fg_\a,\fg_{-\a}]=\bbbc t_\a$, $\a\in R^\times$. Now for $\a\in R^\times$ we have
 $\a\in\span_{\bbbz}\{\a_1,\ldots,\a_\ell,\sg_1,\ldots,\sg_\nu\}$ (see \ref{sen1}) and so $t_\a$ is spanned by $t_{\a_i}, t_{\sg_j}$, $1\leq i\leq\ell$,
 $1\leq j\leq\nu$. So the elements
 $h_i:=[x_{\a_i},x_{-\a_i}]=2t_{\a_i}/(\a_i,\a_i)$, $1\leq i\leq\ell,$ and
$$c_j=\left\{
\begin{array}{ll}
t_{\sg_j}&\hbox{if }1\leq j\leq t\\
{2t_{\sg_j}}/{(\b,\b)}&\hbox{ if }t+1\leq j\leq\nu,
\end{array}\right.
$$
where $\b$ is a root of maximum length, form a basis for $\fg_c\cap\fg_0$ (here $t$ is the twist number of the root system, introduced prior to (\ref{eq99-1})).

Next, using Proposition \ref{protemp2}, we consider a basis
$\xx_\d\sub\{x_\d^\a\mid\a\in\Pi,\;\a+\d\in R\}$ for $\fg_\d\cap\fg_c.$
Thus the set ${\mathcal B}$ given in the statement is a $\bbbc$-basis for $\fg_c$. In what follows we show that the basis ${\mathcal B}$ satisfies properties (i)-(vii) of the statement, where we already know from Proposition \ref{hump3} and Theorem \ref{thmnew5} that parts (v)-(vii) hold.

We have for $\a\in R$,  $[t_{\sg_i},\fg_\a]=\a(t_{\sg_i})\fg_\a=\{0\}$ as $(\a,\sg_i)=0$. Thus $[c_i,\fg]=0$. Since $\fh $ is abelian, we have $[h_i,h_j]=0$. Next for $\a\in R^\times$, 
$$[h_i,x_\a]=\a(h_i)x_\a=
\frac{2\a(t_{\a_i})}{(\a_i,\a_i)}x_\a=(\a,\a^\vee_i)x_\a.$$
Therefore properties (i)-(iii) hold.

 Next let  $\a\in R^\times$. We have $\a=\dot\a+\d$ for some
$\dot\a\in\rd$ and $\d\in R^0$. So using
(\ref{ned1}), we have $\d=\sum_{i=1}^t m_ik_\a\sg_i
+\sum_{i=t+1}^\nu m_i\sg_i$, where $m_i\in\bbbz$, $k_\a=1$ if $\a$ is short and $k_\a=k=(\a,\a)/2$  if $\a$ is long. Then
$$[x_\a,x_{-\a}]=h_\a=\frac{2t_\a}{(\a,\a)}=
\frac{2t_{\dot\a}}{(\a,\a)}+\frac{2t_\d}{(\a,\a)}=h_{\dot\a}+\frac{2t_\d}{(\a,\a)},$$ where
from the theory of finite dimensional simple Lie algebras we know that $h_{\dot\a}\in\sum_{i=1}^\ell\bbbz h_i$, and
\begin{eqnarray*}
\frac{2t_\d}{(\a,\a)}
&=&\frac{2}{(\a,\a)}(\sum_{i=1}^t m_i k_\a t_{\sg_i}
+\sum_{i=t+1}^\nu m_i t_{\sg_i})\\
&=&
\frac{2k_\a}{(\a,\a)}\sum_{i=1}^t m_i  t_{\sg_i}
+\frac{(\b,\b)}{(\a,\a)}\sum_{i=t+1}^\nu m_i\frac{2 t_{\sg_i}}{(\b,\b)}\\
&=&
\frac{2k_\a}{(\a,\a)}\sum_{i=1}^t m_i  c_i
+\frac{(\b,\b)}{(\a,\a)}\sum_{i=t+1}^\nu m_i c_i\in\sum_{i=1}^\nu\bbbz c_i.
\end{eqnarray*}
This completes the proof of (iv).\qed

 We conclude this section with a discussion about dimensions of the isotropic root spaces of the core. To proceed, we need to introduce some terminologies.  We first recall some facts from Section \ref{sub-sec-EALA}.  The root system $R$ involves two semilattices $S$ and $L$. If $\Lam=\la R^0\ra$, then $S$ contains a $\bbbz$-basis $\sg_1,\ldots,\sg_\nu$ of $\Lam$. Moreover,  if $X$ is non-simply laced then $\sg_1,\ldots,\sg_\nu$ can be chosen such that
$$\la L\ra=k\Lam_1\oplus\Lam_2\hbox{ with }\Lam_1=k\bbbz\sg_1\oplus\cdots\oplus k\bbbz\sg_t\hbox{ and }\Lam_2=\bbbz\sg_{t+1}\oplus\cdots\oplus\bbbz\sg_\nu,$$
where $t$ is the twist number of $R$, $k=3$ if $X=G_2$ and $k=2$ otherwise.
  To say that $S$ is a semilattice means that
$$S=\cup_{i=0}^m(\tau_i+2\Lam)\hbox{ where }\tau_0=0\hbox{ and }\tau_i\not=\tau_j\hbox{ mod }2\Lam, \;(i\not=j).
$$ 
If the type of $R$ is non-simply laced then $S=S+L$ and $L=kS+L$, where $k=3$ if $X=G_2$ and $k=2$ otherwise.  Furthermore, there exist two semilattices
$$
\begin{array}{c}
S_1=\cup_{i=1}^{m_1}(\gamma_i+2\la \Lam_1\ra)\andd
S_2=\cup_{i=1}^{m_2}(\eta_i+2\la \Lam_2\ra)\vspace{2mm}\\
\end{array}
$$ with $\gamma_i\not=\gamma_j$ mod $2\Lam_1$, $i\not=j$, and
$\eta_i\not=\eta_j$ mod $2\Lam_2$, $i\not=j$,
such that $S=S_1\oplus\Lam_2$ and $L=k\Lam_1\oplus S_2$.
The integers $m_1$ and $m_2$ are denoted by $\ind(S_1)$ and $\ind(S_2)$, respectively.

With respect to a fixed base $\dot\Pi=\{\a_1,\ldots,\a_\ell\}$ of $\rd$, let $\te_s$ and $\te_l$ be  any  short root and any long root of $\dot R$, respectively. If $R$ is of simply laced types, then $\te_s=\te_l$. Now depending on type $X$ of $R$, we introduce a set $\Pi(X)$ as follows.
% \begin{center}

\begin{tab}\label{tab11} Minimal reflectable bases
% \caption{\footnotesize The cardinality of every reflectable base of an EARS $R$}
$$\hbox{
 $\;$\vspace{2mm} \\
{\begin{tabular}{|c|c|}
$\mbox{Type}$& $\Pi(X)$  \\
\hline
$A _1$ & $\{\a_1, \tau_1 -\a_1, \ldots, \tau_m-\a_1\}$  \\
\hline
%$A_\ell$ & $\ell+\nu$ & --- & --- \\
%\hline
%$D_\ell$ & $\ell+\nu$ & --- & --- \\
%\hline 
$A_\ell (\ell>1)$ & $\{\a_1, \ldots, \a_\ell, \sg_1-\te_s, \ldots, \sg_\nu-\te_s\}$\\
\hline 
$D_\ell$ & $\{\a_1, \ldots, \a_\ell, \sg_1-\te_s, \ldots, \sg_\nu-\te_s\}$ \\
\hline
$E_{6,7,8}$ & $\{\a_1, \ldots, \a_\ell, \sg_1-\te_s, \ldots, \sg_\nu-\te_s\}$\\
\hline
$F_4$ & $\{\a_1, \dots, \a_\ell, \sg_1-\te_s, \cdots, \sg_t-\te_s, \sg_{t+1}-\te_l,
\ldots, \sg_\nu -\te_l\}$ \\
\hline
$G_2$ & $\{\a_1, \dots, \a_\ell, \sg_1-\te_s, \cdots, \sg_t-\te_s, \sg_{t+1}-\te_l,
\ldots, \sg_\nu -\te_l\}$\\
\hline
$B_2$ & $\{\a_1, \a_2, \gamma_1-\te_s, \ldots, \gamma_{m_1}-\te_s, 
\eta_1 -\te_l, \ldots, \eta_{m_2}-\te_l\}$\\
\hline
$B_\ell\; (\ell>2)$ & $\{\a_1, \ldots, \a_\ell, \gamma_1-\te_s, \ldots, \gamma_{m_1}-\te_s, \sg_{t+1}-\te_l, \ldots, \sg_\nu -\te_l \}$\\
\hline
$C_\ell \;(\ell>2)$ & $\{\a_1, \ldots, \a_\ell, \sg_1 -\te_s, \ldots, \sg_t -\te_s, \eta_1 -\te_l, \ldots, \eta_{m_2}-\te_l\}$\\
\hline
\end{tabular}}
}
$$
%\caption{\footnotesize The cardinality of every reflectable base of an EARS $R$}
%\label{tab1}
\end{tab}
%\end{center}
By \cite{Az99}, $\Pi(X)$ is a reflectable base for $R$ with minimal cardinality.

\begin{thm}\label{thmdim}
Let rank $\fg>1$ and $0\not=\d\in R^0$. Then  an upper bound
for $\dim(\fg_c\cap\fg_\d)$ is $\ell+\nu$ if $X$ is simply laced, and for non-simply laced types is given by
$$\hbox{
 $\;$\vspace{2mm} \\
{\begin{tabular}{l|c|c|c|c}
%\diagbox{$X$}{test}
%\diaghead{test}{test}
& $B_\ell\; (\ell\geq 3)$&$C_\ell\;(\ell\geq 3)$&$F_4$&$G_2$  \\
\hline
$\d\in L$ & $\ell+\ind(S_1)+(\nu-t)$ &$\ell+t+1$&$4+\nu$ &$2+\nu$\\
\hline
$\d\not\in L$ & $2$&$\ell+t+\ind(S_2)-2$&$2+t$&$1+t$\\
\hline 
\end{tabular}}
}
$$
If $X=B_2$ and $\d=\lam_1+\lam_2$, $\lam_1\in\Lam_1$, $\lam_2\in\Lam_2$, then the upper bound is given by
$$\left\{\begin{array}{l}
\ell+\ind(S_1)+(\nu-t)\quad\hbox{ if }\d\in L,\\
4\quad\hbox{ if }\lam_1\in 2\Lam_1, \lam_2=\eta_j+\eta_k\hbox{ mod }2\Lam_2, j\not=k,\\
2\quad\hbox{ otherwise.}
%\hbox{if }\lam_1\in 2\Lam_1, \lam_2\not=\eta_{j_1}+\eta_{j_2}+\cdots+\eta_{j_t}\hbox{ mod }2\Lam_2, j_1<\cdots<j_t, t\geq 3,\\
\end{array}\right.
$$
\end{thm}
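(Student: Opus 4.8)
The plan is to read the bound off the minimal reflectable base $\Pi(X)$ recorded in Table \ref{tab11}, which is a reflectable base by \cite{Az99}. By Proposition \ref{protemp2}(i), for $0\neq\d\in R^0$ we have $\fg_\d\cap\fg_c=\span_\bbbz\{x_\d^\b\mid\b\in\Pi(X),\ \b+\d\in R\}$, so the first reduction is $\dim_\bbbc(\fg_c\cap\fg_\d)\leq\#\{\b\in\Pi(X)\mid\b+\d\in R\}$. To make this effective I would rewrite the condition $\b+\d\in R$ as a condition on the isotropic part of $\b$: writing $\b=\dot\b+\sg_\b$ with $\dot\b\in\rd$ and $\sg_\b\in R^0$, the description (\ref{sss}) of $R$ shows that $\b+\d\in R$ iff $\sg_\b+\d\in S$ when $\dot\b$ is short and iff $\sg_\b+\d\in L$ when $\dot\b$ is long. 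In the simply laced case every element of $\Pi(X)$ is short, so $\dim_\bbbc(\fg_c\cap\fg_\d)\leq|\Pi(X)|=\ell+\nu$ with no further work; since $\la L\ra\sub S$, the same coarse cardinality argument covers the $\d\in L$ rows once one accounts for the collapse described below.

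The substance of the theorem is the sharper bound when $\d\notin L$ (and the dual phenomenon in type $C$), where the naive cardinality count is far too large and one must exhibit linear relations among the surviving $x_\d^\b$. The key device is Lemma \ref{tec1}(i): if $\mu,\mu'$ are short roots whose sum $\mu+\mu'$ is a long root, then $\d\notin L$ forces $x_\d^{\mu+\mu'}=0$, and the inclusion $x_\d^{\mu+\mu'}\in\span\{x_\d^\mu,x_\d^{\mu'}\}$ degenerates into a proportionality $x_\d^\mu\in\bbbc x_\d^{\mu'}$. In type $B_\ell$ any two short root vectors are linked this way (via $\ve_i=\ve_j+(\ve_i-\ve_j)$ with $\ve_i-\ve_j$ long), so the short contributions collapse onto a single line; splitting $\te_l$ into two short roots feeds the long-shifted generators $\sg_j-\te_l$ into the same analysis, which is what cuts the bound down to $2$. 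Type $C_\ell$ is then handled by the $B$--$C$ duality already invoked in Remark \ref{roottemp}, interchanging the roles of short and long (equivalently of $S$ and $L$), so that the collapse now acts on the long-shifted generators and yields $\ell+t+\ind(S_2)-2$; the exceptional types $F_4$ and $G_2$ are treated by the same short/long bookkeeping against their explicit root realizations as in \cite[\S12.1]{Hum80}.

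The delicate point, and where I expect the real work to sit, is bounding the number of \emph{independent} generators that survive after the collapse has been applied; this amounts to counting how many coset representatives $\gamma_i$ (resp.\ $\eta_i$) of the semilattices $S_1$ (resp.\ $S_2$) satisfy $\gamma_i+\d\in S$ (resp.\ $\eta_i+\d\in L$), using the decompositions $S=S_1\oplus\Lam_2$ and $L=k\Lam_1\oplus S_2$ from (\ref{ned1}) together with the identities $S=S+L$ and $L=kS+L$ valid in the non-simply laced case. The hardest case is $X=B_2$, where both $S_1$ and $S_2$ are nontrivial and the two isotropic directions genuinely interact: after writing $\d=\lam_1+\lam_2$ with $\lam_i\in\Lam_i$, one must split the analysis according to whether $\lam_1\in2\Lam_1$ and whether $\lam_2$ is congruent modulo $2\Lam_2$ to a sum $\eta_j+\eta_k$ of two distinct coset representatives, which is exactly what produces the three-way answer (the value $4$ arising precisely when two long-shifted generators survive independently). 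Finally, as a consistency check that these numbers are the correct upper bounds rather than artefacts of an inefficient spanning set, I would compare them against the lower bounds furnished by Lemma \ref{newlem34} through the affine subalgebras $\fg^a_{\rd,\d}$.
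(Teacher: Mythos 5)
Your opening reduction is exactly the paper's: by Proposition \ref{protemp2}(i) applied to the minimal reflectable base $\Pi(X)$ of Table \ref{tab11}, $\dim_\bbbc(\fg_c\cap\fg_\d)\leq\#\{\b\in\Pi(X)\mid\b+\d\in R\}$, and the translation of ``$\b+\d\in R$'' into ``$\sg_\b+\d\in S$ (resp.\ $L$) according as $\dot\b$ is short (resp.\ long)'' is also how the paper proceeds; the simply laced case is then immediate. But you go astray at the point you identify as ``the substance of the theorem.'' The paper does \emph{not} exhibit any linear relations among the surviving $x_\d^\b$: the entire content of the non-simply-laced rows is that the filter $\b+\d\in R$ already eliminates all but the stated number of elements of $\Pi(X)$. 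For example, in type $B_\ell$ with $\d\in S\setminus L$, writing $\d=\gamma_i+2\lam+\lam_2$, the $\ell-1$ long simple roots fail ($\d\notin L$), all $\sg_j-\te_l$ fail, and among the $\gamma_j-\te_s$ only $j=i$ survives, leaving exactly the two generators $\a_\ell$ and $\gamma_i-\te_s$ — the bound $2$ is the raw count, with no collapse required. The real work is thus the coset bookkeeping you relegate to your third paragraph (which $\gamma_j+\d$ land in $S$, which $\sg_j+\d$ land in $L$, using $S=S_1\oplus\Lam_2$, $L=k\Lam_1\oplus S_2$), carried out case by case; this is also what produces the three-way split for $B_2$.

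Moreover, the collapse mechanism you propose does not work as stated. Lemma \ref{tec1}(i) gives $x_\d^{\mu+\mu'}\in\span\{x_\d^\mu,x_\d^{\mu'}\}$ via the Jacobi identity applied to $[x_{\mu+\mu'+\d},[x_{-\mu},x_{-\mu'}]]$; when $\d\notin L$ and $\mu+\mu'$ is long, $\mu+\mu'+\d\notin R$, so $x_{\mu+\mu'+\d}=0$ and every term in that identity vanishes. You cannot extract a proportionality $x_\d^\mu\in\bbbc x_\d^{\mu'}$ from the degenerate relation $0=0$; the containment $0\in\span\{x_\d^\mu,x_\d^{\mu'}\}$ carries no information. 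Since (as above) no such relation is needed once the count is done correctly, your argument can be repaired by deleting the collapse step entirely and replacing it with the explicit enumeration over $\Pi(X)$ for each type and each stratum $\d\in L$, $\d\in S\setminus L$, $\d\in R^0\setminus S$. Your closing consistency check against the lower bound of Lemma \ref{newlem34} is a reasonable sanity test but is not part of the proof.
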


\proof We apply the reflectable base $\Pi(X)$ given in  Table \ref{tab11} to
{Proposition} \ref{protemp2}(i), namely in the spanning set $\{x_\d^\a\mid \a\in\Pi(X),\;\a+\d\in R\}$ for $\fg_\d\cap\fg_c$ we shall count the number of  $\a\in\Pi(X)$ such that $\a+\d\in R$.

Let $\d\in R^0\sub\Lam$. If $X$ is simply laced 
then $S$ is a lattice and so for each $\a\in\Pi(X)$, we have $\a+\d\in R$. Thus
  $\dim(\fg_c\cap\fg_\d)\leq|\Pi(X)|=\ell+\nu$.

Next, we consider $X$ to be of type $B_\ell$, $\ell\geq 3$. We have $R^0=(R^0\setminus S)\uplus (S\setminus L)\uplus L$.
Assume first that  $\d\in L$. Then for $\a\in\dot\Pi$, we have
$\dot\a+\d\sub R$. Also for $1\leq i\leq m_1$, 
$\gamma_i-\te_s+\d\sub-\te_s+S+L\sub-\te_s+S\sub R$.
Finally, since in the case under consideration $L$ is a lattice  we have for $t+1\leq j\leq \nu$, 
$\sg_i-\te_l+\d\sub\te_l+L\sub R$. Thus $\dim(\fg_c\cap\fg_\d)\leq
|\dot\Pi|+m_1+(\nu-t)$, as required.

If $\d\in S\setminus L$, then $\d=\lam_1+\lam_2$, where
$\lam_1=\gamma_i+2\lam$ for some $1\leq i\leq m_1$ and $\lam\in\Lam_1$. Without loose of generality assume that
$\a_1,\ldots,{\a_{\ell-1}}$ are long and $\a_\ell$ is short.
Then for $1\leq i\leq\ell-1$, {$\a_i+\d
%=\a_i+\gamma_i+2\lam+\lam_2
\not\in R$,} 
and $\a_\ell+\gamma_i+2\lam+\lam_2
\in\rds+S_1+\Lam_2=\rds+S\sub R$. Also
for $1\leq j\leq m_1$, {we have
$\gamma_i-\theta_s+\d=\gamma_j-\te_s+\gamma_i+2\lam+\lam_2
\in\rds+\gamma_j+\gamma_i+2\Lam_1+\Lam_2$.} But the latter is contained in $R$ only if $i=j$. Finally for
$t+1\leq j\leq\nu$, 
$\sg_j-\theta_l+\d=\sg_j-\te_l+\gamma_i+2\lam+\lam_2\in\rdl+\gamma_i+2\Lam_1+\Lam_2$. But the latter has empty intersection with $R$. Combining all these informations, we get
$\dim(\fg_c\cap\fg_\d)\leq 2.$

Finally, assume that $\d\in R^0\setminus S$. Since
$S=S_1\oplus\Lam_1$, and $R^0=S+S=(S_1+S_1)\oplus\Lam_2$, we get
$\d=(\gamma_i+\gamma_j+2\lam_1)+\lam_2$ for some $1\leq i\not=j\leq m_1$, $\lam_1\in2\Lam_1$ and $\lam_2\in\Lam_2$. {Since  $\d\not\in S$, we have
$\a+\d\not\in R$ for all $\a\in\dot\Pi$.} Now for $1\leq k\leq m_1$,
$\gamma_k-\te_s+\d=(\gamma_k-\gamma_i-\gamma_j+2\lam_1)-\te_s$ which belong to $R$ if and only if $k=i$ or $k=j$. Finally
for $t+1\leq k\leq\nu$, we have
$\sg_k+\d\not\in L$ and so $\sg_k-\te_l+\d\not\in R$. Thus
$\dim(\fg_\d\cap\fg_c)\leq 2$ as it was claimed. 

The proof for remaining types follows by a similar approach.\qed

As one knows (see \cite[Corollary 8.3]{Kac85}), Theorem \ref{thmdim} can be sharpen for affine Lie algebras, we illustrate it here.
\begin{cor}\label{affinedim}
Suppose that $\fg>1$, $\nu=1$ and $0\not=\d\in R^0$.  Then $\dim(\fg_\d)=\ell$ if $X$ is simply laced, and if  $X$ is non-simply laced then  $dim(\fg_\d)$ is given by the following table:
$$\hbox{
 $\;$\vspace{2mm} \\
{\begin{tabular}{l|c|c|c|c}
%\diagbox{$X$}{test}
%\diaghead{test}{test}
& $B_\ell$&$C_\ell$&$F_4$&$G_2$  \\
\hline
$\d\in L$ & $\ell$ &$\ell$&$4$ &$2$\\
\hline
$\d\not\in L$ & $1$&$\ell-1$&$2$&$1$\\
\hline 
\end{tabular}}
}
$$
%\caption{\footnotesize The cardinality of every reflectable base of an EARS $R$}
%\label{tab1}
%\end{tab}
\end{cor}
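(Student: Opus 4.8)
The plan is to use that, for $\nu=1$, the core $\fg_c$ is the derived subalgebra $\hat\fg'$ of an affine Kac--Moody Lie algebra, so that for $0\ne\d\in R^0$ the space $\fg_\d$ is precisely an \emph{imaginary} root space of an affine algebra and its dimension is classical. First I would fix the minimal isotropic root $\d_0$, so that $R^0=\{n\d_0\mid n\in\bbbz\setminus\{0\}\}$ and $\Lam=\bbbz\d_0$, and identify $R$ with an affine root system $X_N^{(k)}$ whose finite quotient $\rd$ has type $X$ and rank $\ell$. A given type $X$ can arise untwisted ($k=1$, so $L=S=\Lam$ and every long root occurs at every level) and, when $X$ is non-simply laced, also twisted; the reduced twisted realizations are $D_{\ell+1}^{(2)}$, $A_{2\ell-1}^{(2)}$, $E_6^{(2)}$, $D_4^{(3)}$ for $B_\ell,C_\ell,F_4,G_2$, with $k=2,2,2,3$. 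In every case a long root of $\rd$ sits at level $n$ exactly when $k\mid n$, so for $\d=n\d_0$ one has $\d\in L$ iff $k\mid n$; thus the untwisted algebras together with the twisted ones at levels divisible by $k$ fill the first row, while the twisted ones at the remaining levels fill the second. In particular, since the only reduced simply laced case is untwisted, $\d\in L$ always holds there and the table reduces to the single value $\ell$.

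Next I would invoke Kac's multiplicity formula \cite[Corollary 8.3]{Kac85}. Writing $\dot\fg=\bigoplus_{\bar j\in\bbbz/k}\dot\fg_{\bar j}$ for the $\bbbz/k$-grading of the underlying finite dimensional simple Lie algebra induced by the order $k$ automorphism, one has $\dim\fg_{n\d_0}=\dim(\dot\fg_{\bar n})_0$, the dimension of the zero weight space of $\dot\fg_{\bar n}$ with respect to the fixed Cartan $\dot\fg_{\bar 0}\cap\fh$. For $k\mid n$ this equals $\dim(\dot\fg_{\bar 0}\cap\fh)=\ell$, giving the first row (and the simply laced value $\ell$, where $k=1$ makes the grading trivial). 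For $k\nmid n$ one reads off the zero weight multiplicity of the $\dot\fg_{\bar 0}$-module $\dot\fg_{\bar n}$: the natural module of $B_\ell$ (zero weight multiplicity $1$), the $C_\ell$-module of highest weight $\omega_2$ (multiplicity $\ell-1$), the $26$-dimensional module of $F_4$ (multiplicity $2$), and the $7$-dimensional module of $G_2$ (multiplicity $1$, for both nonzero residues $\bar n$ since $k=3$); these are exactly the entries of the second row.

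Finally, to present this as a genuine \emph{sharpening} of Theorem \ref{thmdim}, I would cross-check against the spanning set $\{x^\a_\d\mid\a\in\Pi(X),\ \a+\d\in R\}$ of Proposition \ref{protemp2}(i): specializing the general upper bounds to $\nu=1$ (with $t=0$ untwisted and $t=1$ twisted) yields numbers that are at least those above, the gap being caused by linear relations among the $x^\a_\d$ that are invisible at root level but appear in the affine realization. Concretely, in the simply laced case the $\ell+1$ generators $x^{\a_1}_\d,\dots,x^{\a_\ell}_\d,x^{\a_0}_\d$ (with $\a_0=\d_0-\te_s$) correspond to $h_{\a_1}\ot t^n,\dots,h_{\a_\ell}\ot t^n,-h_{\te}\ot t^n$, so the single relation expressing the coroot $h_\te$ of the highest root as an integer combination of the simple coroots drops the dimension from the bound $\ell+1$ to $\ell$; the remaining rows follow from the analogous relations in each twisted model. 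The main obstacle is therefore not any single deep step but the bookkeeping: matching every reduced nullity-one EARS to its (possibly twisted) affine type, fixing the short/long normalization (\ref{fm1}) so that $\d\in L\iff k\mid n$ holds on the nose, computing the four zero weight multiplicities, and checking that these exact values indeed lie below the upper bounds of Theorem \ref{thmdim}.
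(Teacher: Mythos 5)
Your proof is correct, but it runs in the opposite direction from the paper's. The paper stays inside its own machinery: it takes the minimal reflectable base $\Pi(X)=\{\a_1,\dots,\a_\ell,\sg-\b\}$ of Table \ref{tab11}, observes that in nullity one the extra generator is redundant because $x_\d^{\sg-\b}\in\bbbc x_\d^{-\b}$ and $x_\d^{-\b}\in\span_\bbbz\{x_\d^{\a_1},\dots,x_\d^{\a_\ell}\}$ (via the operators $\Phi_{\b_i}$ and Lemma \ref{lemassump2}), and then simply counts, using the description $R=(S+S)\cup(\rds+S)\cup(\rdl+L)$, how many simple roots $\a_i$ satisfy $\a_i+\d\in R$; strictly speaking this yields the table as an upper bound, with the matching lower bound delegated to the citation of Kac's multiplicity formula in the sentence preceding the corollary. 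You instead start from that multiplicity formula: you identify each reduced nullity-one EARS with its (possibly twisted) affine type, translate $\d\in L$ into divisibility of the level by the tier $k$, and compute $\dim(\dot\fg_{\bar n})_0$ as a zero-weight multiplicity of the relevant $\dot\fg_{\bar 0}$-module in each of the four twisted cases --- all of which is accurate (your four multiplicities $1$, $\ell-1$, $2$, $1$ agree with $\dim(\dot\fh\cap\dot\fg_{\bar n})$ for $D_{\ell+1}^{(2)}$, $A_{2\ell-1}^{(2)}$, $E_6^{(2)}$, $D_4^{(3)}$). What your route buys is that the equality in the statement is established directly rather than by appeal to "as one knows"; what the paper's route buys is a demonstration that the general spanning set of Proposition \ref{protemp2}(i) and the combinatorics of Theorem \ref{thmdim} already reproduce the exact affine answer once the single redundant generator is removed, which is the point the authors want to illustrate. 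Your closing cross-check (the relation $h_\te=\sum_i c_ih_{\a_i}$ accounting for the drop from $\ell+1$ to $\ell$) is precisely the redundancy the paper isolates, so the two arguments dovetail; the only mild caveat is that you should explicitly invoke the classification of tame reduced nullity-one EALAs as affine Kac--Moody algebras to justify your opening identification of $\fg_c$ with a twisted loop algebra.
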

 
 \proof If $\nu=1$, each reflectable base given in Table \ref{tab11}, is of the form $\Pi(X)=\{\a_1,\ldots,\a_\ell,\sg-\b\}$
  for some isotropic root $\sg$ and some $\b\in\dot R$. Since, we are in affine case, there exists an isotropic root $\d_1$ such that
  $\d=k\d_1$ and $\sg=k'\d_1$ for some $k,k'\in\bbbz$.
  Then from realizations of affine Lie algebras we know that 
  $x_\d^{\sg-\b}=x_{k\d_1}^{k'\d_1-\b}\in\bbbc x_\d^{-\b}$. Now as $\b=w_{\b_1}\cdots w_{\b_{t}}(\b_{t+1})$ for some $\b_i\in\dot\Pi$, we get
  $x_\d^{-\b}\in\bbbz\Phi_{\b_1}\cdots\Phi_{\b_t}(x_{\d}^{-\b_{t+1}})\in\span_{\bbbz}\{x_\d^{\a_1},\ldots,x_\d^{\a_\ell}\}.$ Thus using {Proposition} \ref{protemp2}(i), it remains to count the number
  of $\a_i$ such that $\a_i+\d\in\ R$, $1\leq i\leq\ell$. But this is immediate using description (\ref{sss}) of $R$.\qed
   
    \section{On the Uniqueness of integral  structure}\label{On the Uniqueness of integral structure}
The goal  in this section is to show that the integral structure provided for the core of an extended affine Lie algebra given in Section \ref{Integral structure of the core} is independent of the choice of the {Chevalley system.}

 We continue with the notations and terminologies as in Sections \ref{Preliminary results on integral structure} and \ref{Integral structure of the core}. As before $(\fg,\fm,\fh)$ is a tame reduced extended affine Lie algebra of rank $>1$, with root system $R$. By Theorem \ref{thmnew5}, $\fg^\bbbz_c$ is generated as a $\bbbz$-Lie algebra
 by {the} choice of {a Chvalley system} $\{x_\a\mid\a\in R^\times\}$; we denote this dependence by $\fg_c^\bbbz=\fg_c^\bbbz(x_\a\mid\a\in R^\times)$. In this section, we discuss relations between $\fg^\bbbz_c(x_\a\mid\a\in R^\times)$ and
 $\fg^\bbbz_c(\bar{x}_\a\mid\a\in R^\times)$ for two choices of {Chevalley systems} $\{x_\a\}_{\a\in R^\times}$ and 
 $\{\bar{x}_\a\}_{\a\in R^\times}$. We have $\bar{x}_\a=\eta_\a x_\a$, $\a\in R^\times$, for some $\eta_\a\in\bbbc$.
 Since $[x_\a,x_{-\a}]=h_\a=[\bar{x}_\a,\bar{x}_{-\a}]$, we get
 $\eta_\a\eta_{-\a}=1$ for each $\a\in R^\times$. We recall {from Proposition \ref{hump3} part (ii)} that for a nilpotent pair $\{\a,\b\}$ of roots in $R$, we have
 $[x_\a,x_\b]=c_{\a,\b}x_{\a+\b}$, $[\bar{x}_\a,\bar{x}_\b]=\bar{c}_{\a,\b}\bar{x}_{\a+\b}$ {and $c_{\a,\b}=\pm\bar{c}_{\a,\b}$}.

 \begin{lem}\label{ned3} 
(i) If $\{\a,\b\}$ is a nilpotent pair  in $R$ then
$c_{\a,\b}\eta_{\a+\b}=\bar{c}_{\a,\b}\eta_{\a}\eta_{\b}$
{and}
$\eta_{\a+\b}=\pm \eta_\a\eta_{\b}$.

(ii) Suppose  $\a,\b\in R^\times$, $\d\in R^0$, $\a+\d,\b+\d\in R$.
Then 
 $\eta_{\a+\d}\eta_{-\a}=\pm\eta_{\b+\d}\eta_{-\beta}$.
  \end{lem}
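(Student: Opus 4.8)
The plan is to extract everything from how the scalars $\eta_\a$ interact with the two bracket relations, proving (i) first and then using it as the engine for (ii).

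For (i), I would expand $[\bar x_\a,\bar x_\b]$ in two ways. Since $\bar x_\gamma=\eta_\gamma x_\gamma$, one has $[\bar x_\a,\bar x_\b]=\eta_\a\eta_\b[x_\a,x_\b]=\eta_\a\eta_\b c_{\a,\b}x_{\a+\b}$, while also $[\bar x_\a,\bar x_\b]=\bar c_{\a,\b}\bar x_{\a+\b}=\bar c_{\a,\b}\eta_{\a+\b}x_{\a+\b}$. As $\a+\b\in R^\times$, the space $\fg_{\a+\b}$ is one dimensional, so comparing coefficients gives $c_{\a,\b}\eta_\a\eta_\b=\bar c_{\a,\b}\eta_{\a+\b}$. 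Now I invoke the already recorded fact $\bar c_{\a,\b}=s\,c_{\a,\b}$ with $s\in\{\pm1\}$ (both equal $\pm(d+1)$ by Proposition \ref{hump3}); dividing the last identity by the nonzero integer $c_{\a,\b}$ yields $\eta_{\a+\b}=s\,\eta_\a\eta_\b=\pm\eta_\a\eta_\b$, which is the second assertion, and feeding this back gives $c_{\a,\b}\eta_{\a+\b}=s\,c_{\a,\b}\eta_\a\eta_\b=\bar c_{\a,\b}\eta_\a\eta_\b$, which is the first.

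For (ii), set $\zeta_\a:=\eta_{\a+\d}\eta_{-\a}$, which is exactly the scalar determined by $[\bar x_{\a+\d},\bar x_{-\a}]=\eta_{\a+\d}\eta_{-\a}\,[x_{\a+\d},x_{-\a}]=\zeta_\a x_\d^\a$; the goal is to show $\zeta_\a=\pm\zeta_\b$ for all $\a,\b$ in $D_\d:=\{\gamma\in R^\times\mid\gamma+\d\in R\}$. The local step is an elementary move: if $\a,\a+\gamma\in D_\d$ for some $\gamma\in R^\times$ such that both $\{\a,\gamma\}$ and $\{\a+\d,\gamma\}$ are nilpotent pairs, then part (i) applied to $\{\a+\d,\gamma\}$ and to $\{-\a,-\gamma\}$ gives $\eta_{(\a+\gamma)+\d}=\pm\eta_{\a+\d}\eta_\gamma$ and $\eta_{-(\a+\gamma)}=\pm\eta_{-\a}\eta_{-\gamma}$; multiplying these and using $\eta_\gamma\eta_{-\gamma}=1$ produces $\zeta_{\a+\gamma}=\pm\eta_{\a+\d}\eta_{-\a}=\pm\zeta_\a$.

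It then remains to connect any two roots of $D_\d$ by a chain of such elementary moves, and this is where I expect the real work to lie. I would establish it combinatorially: writing $\a=\dot\a+\lambda$ with $\dot\a\in\rd^\times$ and $\lambda\in R^0$, one moves through $D_\d$ by first transporting the finite part $\dot\a$ along a chain in the connected finite root system $\rd$ (consecutive roots differing by a root) and then adjusting the isotropic part, at each stage verifying the double nilpotency $\a+\gamma,\a+\d+\gamma\in R^\times$ required to apply the move; here the irreducibility of $R^\times$ (axiom (A5)(a)), the explicit description (\ref{sss}) of $R$, and the standing hypothesis $\hbox{rank}\,\fg>1$ are what make the chain available, with the reflectable base of Proposition \ref{protemp2} and Lemma \ref{tec1}(ii) indicating which moves suffice. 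Transitivity of equality up to sign then yields $\zeta_\a=\pm\zeta_\b$, i.e. $\eta_{\a+\d}\eta_{-\a}=\pm\eta_{\b+\d}\eta_{-\b}$, which is (ii).
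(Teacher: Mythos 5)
Part (i) of your proposal is correct, and in fact more self-contained than the paper's treatment: you get both identities by expanding $[\bar x_\a,\bar x_\b]$ two ways in the one-dimensional space $\fg_{\a+\b}$ and quoting $c_{\a,\b}=\pm\bar c_{\a,\b}$, whereas the paper reduces to the finite-dimensional simple subalgebra $\fg_{\a,\b}$ and cites Humphreys \S 25.3. Your ``elementary move'' for (ii) is also a valid deduction from (i): the double nilpotency hypotheses are exactly what is needed, and $\eta_\gamma\eta_{-\gamma}=1$ kills the auxiliary factors.

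The genuine gap is the connectivity claim for $D_\d=\{\gamma\in R^\times\mid\gamma+\d\in R\}$, which you yourself flag as ``where the real work lies'' and then only sketch. This is not a routine verification: in the non-simply-laced cases the conditions $\a+\gamma\in R^\times$ and $\a+\gamma+\d\in R^\times$ translate into semilattice membership constraints ($S$ versus $L$ in the description (\ref{eq99-1})), and for $\d\notin L$ the set $D_\d$ can consist entirely of short roots whose pairwise differences need not lie in $R^\times$, so one must exhibit intermediate roots case by case; moreover ``adjusting the isotropic part'' is itself a nontrivial move since two roots with the same finite part differ by an isotropic vector, which is never in $R^\times$. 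The paper avoids building chains altogether: for $\a=\dot\a+\sg$ it compares $\zeta_\a:=\eta_{\a+\d}\eta_{-\a}$ in a \emph{single} step with $\zeta_\theta$ for $\theta=\dot\theta+\sg$, $\dot\theta$ the highest root of the same length as $\dot\a$, using the Jacobi identity on $[\bar x_\theta,[\bar x_{\a+\d},\bar x_{-\a}]]$ and the highest-root property $\dot\theta+\dot\a\notin\rd$ to make one Jacobi term vanish (so only $\theta-\a\in R^\times$ is needed, not the two-sided nilpotency your move requires); it then links the short and long cases by writing a long finite root as a sum of two short ones and invoking part (i). Until you either carry out the type-by-type connectivity argument or replace it with a hub argument of this kind, part (ii) of your proof is incomplete.
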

  
  \proof (i) By \cite[Lemma 3.5]{AP19}, $R_{\a,\b}=(\bbbz\a\oplus\bbbz\b)\cap R$
  is an irreducible reduced finite root system. Then \cite[\S 25.3]{Hum80} gives the result.
  
  (ii)  Assume first that $\a$ and $\b$ have the same length.
  Let $\dot\theta$ be the highest long (short)  root of $\dot R$
  if $\a$ is long (short). We have $\a=\dot\a+\sg$ for some
  $\dot\a\in\dot R$ and $\sg\in R^0$. We may assume without loos
  of generality that $\dot\a\in\rd^+$. We set
  $\theta:=\dot\theta+\sg$. Since $\dot\a$ and $\dot\theta$ have the same length,  we have $\theta\in R$. It is enough to show that 
  $\eta_{\a+\d}\eta_{-\a}=\eta_{\theta+\d}\eta_{-\theta}$. 
  Assume $\dot\a\not=\dot\theta$.
  Since
  $\theta+\dot\a\not\in\dot R$, we get from the Jacobi identity that
  \begin{eqnarray*}
  [\bar{x}_\theta, [\bar{x}_{\a+\d},\bar{x}_{-\a}]]&=&-[\bar{x}_{\a+\d},[\bar{x}_{-\a},\bar{x}_\theta]]\\
  &=&
  -\bar{c}_{-\a,\theta}\bar{c}_{\a+\d,\theta-\a}\bar{x}_{\theta+\d}
  =-\eta_{\theta+\d}
  \bar{c}_{-\a,\theta}\bar{c}_{\a+\d,\theta-\a}x_{\theta+\d}
\end{eqnarray*}  
On the other hand
 \begin{eqnarray*}
  [\bar{x}_\theta, [\bar{x}_{\a+\d},\bar{x}_{-\a}]]&=&\eta_\theta\eta_{\a+\d}
  \eta_{-\a}[x_\theta, [x_{\a+\d},x_{-\a}]]\\
  &=&
  -\eta_\theta\eta_{\a+\d}
  \eta_{-\a}[x_{\a+\d},[x_{-\a},x_\theta]]\\
  &=&
 - c_{-\a,\theta}c_{\a+\d,\theta-\a}\eta_\theta\eta_{\a+\d}
  \eta_{-\a}x_{\theta+\d}.
\end{eqnarray*}  
  Comparing the above equalities we get,
  $$\eta_{\theta+\d}=\pm\eta_\theta\eta_{\a+\d}
  \eta_{-\a}$$
Since $\eta_{\theta}\eta_{-\theta}=1$, we are done.

To complete the proof, it remains to show that if $R$ is not simply laced, and $\b\in R$ is long, then
$\eta_{\b+\d}\eta_{-\b}=\eta_{\a+\d}\eta_{-\a}$ for some short root $\a$. So assume that $\b$ is long.
We have $\b=\dot\b+\sg$ for some long root $\dot\b\in\rd$ and $\sg\in R^0$. From realization of finite root systems, one can see that $\dot\b=\dot{\a}_1+\dot{\a}_2$ for some short roots 
$\dot{\a}_1$ and $\dot{\a}_2\in\dot R$. Then we have from part (i),
\begin{eqnarray*}
\eta_{\b+\d}\eta_{-\b}&=&
\eta_{{\dot\a}_1+\dot{\a}_2+\sg+\d}\eta_{-\dot{\a}_1-\dot{\a}_2-\sg}\\
&=&
\eta_{({\dot\a}_1+\d)+(\dot{\a}_2+\sg)}\eta_{-\dot{\a}_1-(\dot{\a}_2+\sg)}\\
&=&
\pm\eta_{{\dot\a}_1+\d}\eta_{-\dot{\a}_1}\underbrace{\eta_{\dot{\a}_2+\sg}\eta_{-\dot{\a}_2-\sg}}_1\\
&=&
\pm\eta_{{\dot\a}_1+\d}\eta_{-\dot{\a}_1}.
\end{eqnarray*} 
\qed

\begin{thm}\label{simply} ({\bf Uniqueness Theorem})
Assume that $\fg$ is a reduced tame extended affine Lie algebra of rank $>1$. Then, as $\bbbz$-Lie algebras we have
$\fg_c^\bbbz(x_\a\mid\a\in R^\times)\cong\fg_c^\bbbz(\bar{x}_\a\mid\a\in R^\times)$, for two choices of {Chevalley systems} $\{x_\a\}_{\a\in R^\times}$ and
$\{\bar{x}_\a\}_{\a\in R^\times}$.
\end{thm}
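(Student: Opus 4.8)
The plan is to produce the isomorphism from a single diagonal automorphism of $\fg$. Writing $\bar x_\a=\eta_\a x_\a$ with $\eta_\a\in\bbbc^\times$, recall that $\eta_\a\eta_{-\a}=1$ and that Lemma \ref{ned3} makes $\eta$ multiplicative up to sign: $\eta_{\a+\b}=\pm\eta_\a\eta_\b$ for every nilpotent pair $\{\a,\b\}$, while for fixed $\d\in R^0$ the scalar $\eta_{\a+\d}\eta_{-\a}$ is independent of $\a$ up to sign. I would first reduce the theorem to the existence of a group homomorphism $\mu\colon\langle R\rangle\to\bbbc^\times$ with $\mu_\a\in\{\eta_\a,-\eta_\a\}$ for all $\a\in R^\times$. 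Granting such a $\mu$, the linear map $\Theta$ acting on each root space $\fg_\gamma$ ($\gamma\in R$) by the scalar $\mu_\gamma$ is a diagonal Lie algebra automorphism of $\fg$: since $[\fg_\gamma,\fg_{\gamma'}]\subseteq\fg_{\gamma+\gamma'}$ and $\mu$ is a character, $\Theta[u,v]=\mu_{\gamma+\gamma'}[u,v]=[\Theta u,\Theta v]$; moreover $\mu_0=1$ gives $\Theta|_\fh=\id$, and $\Theta$ stabilizes the graded ideal $\fg_c$.

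Granting $\Theta$, the conclusion is immediate from the explicit basis $\mathcal{B}$ of $\fg_c$ in Theorem \ref{thmnew1}: $\Theta$ sends $x_\a\mapsto\mu_\a x_\a=\pm\bar x_\a$, fixes the Cartan elements $h_i$ and $c_j$, and sends $x_\d^\b=[x_{\b+\d},x_{-\b}]\mapsto\mu_\d x_\d^\b$. Here $\b+\d\in R^\times$ (the form kills $\d$), so the character relation forces $\mu_\d=\mu_{\b+\d}\mu_{-\b}=\pm\eta_{\b+\d}\eta_{-\b}$, whence $\Theta(x_\d^\b)=\pm\bar x_\d^\b$, the sign being consistent across the various $\b$ by Lemma \ref{ned3}(ii). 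Thus $\Theta$ carries the $\bbbz$-spanning set of $\fg_c^\bbbz(x_\a\mid\a\in R^\times)$ from Proposition \ref{protemp2}(ii) onto $\pm$ the corresponding spanning set of $\fg_c^\bbbz(\bar x_\a\mid\a\in R^\times)$. Therefore $\Theta(\fg_c^\bbbz(x_\a))=\fg_c^\bbbz(\bar x_\a)$, and since $\Theta$ is a $\bbbc$-Lie algebra automorphism, its restriction is a $\bbbz$-Lie algebra isomorphism, with inverse induced by the character $\mu^{-1}$.

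It remains to construct $\mu$, which is the main step. Using the decomposition $\langle R\rangle=\langle\dot R\rangle\oplus\Lam$ afforded by the finite system $\dot R$ and $\Lam=\langle R^0\rangle$, I would fix the $\bbbz$-basis consisting of the simple roots $\dot\a_1,\dots,\dot\a_\ell$ of $\dot R$ together with $\sg_1,\dots,\sg_\nu$ from (\ref{sen1}). I would set $\mu(\dot\a_i):=\eta_{\dot\a_i}$ and $\mu(\sg_j):=\eta_{\b_j+\sg_j}\eta_{-\b_j}$, where $\b_j\in R^\times$ is chosen with $\b_j+\sg_j\in R^\times$ (possible by axiom (A5)), and extend multiplicatively to a character of the free abelian group $\langle R\rangle$.

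The crux, and the step I expect to be hardest, is verifying that this $\mu$ satisfies $\mu_\a=\pm\eta_\a$ for every $\a\in R^\times$. This is the extended affine analogue of the classical fact (see \cite[\S25.3]{Hum80}) that a sign-multiplicative system on the roots can be rigidified by a choice of signs into an honest character; the obstruction is a possible inconsistency (a ``monodromy'') among the $\pm$ choices. I would dispose of it by the same connectivity used throughout Section \ref{Integral structure of the core}: writing $\a=w_{\b_1}\cdots w_{\b_{k-1}}(\b_k)$ with the $\b_i$ in a reflectable base $\Pi$ and inducting on $k$, one reduces the identity $\mu_{w_\b(\gamma)}=\pm\eta_{w_\b(\gamma)}$ to the $\b$-string relations $\eta_{\gamma-m\b}=\pm\eta_\gamma\eta_\b^{-m}$, which follow by iterating the nilpotent-pair relation of Lemma \ref{ned3}(i) along the string and invoking Lemma \ref{ned3}(ii) whenever the string meets an isotropic root. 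The non-simply-laced types require, exactly as in the proofs of Lemma \ref{tec1} and Proposition \ref{protemp2}, the additional reduction of a long root to a sum of short roots. Carrying out this bookkeeping establishes $\mu_\a=\pm\eta_\a$ on all of $R^\times$ and completes the proof.
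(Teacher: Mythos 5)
Your proposal is correct in outline but takes a genuinely different route from the paper's. The paper never rectifies the signs at all: it defines the isomorphism $f$ directly on the spanning set of Proposition \ref{protemp2}(ii) by $x_\a\mapsto\eta_\a\bar{x}_\a$ (i.e.\ multiplication by $\eta_\a^2$ on $\fg_\a$) and $x_\d^\a\mapsto\eta_{\a+\d}\eta_{-\a}\bar{x}_\d^\a$ (multiplication by $(\eta_{\a+\d}\eta_{-\a})^2$ on the relevant piece of $\fg_\d$), and then verifies well-definedness and the bracket identities case by case. The point of that choice is that squaring kills every $\pm$ ambiguity in Lemma \ref{ned3}, so well-definedness on the linearly dependent isotropic generators and compatibility with products drop out of Lemma \ref{ned3}(i),(ii) with no character to build. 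Your route instead front-loads all the difficulty into producing an honest character $\mu$ of $\la R\ra$ with $\mu_\a=\pm\eta_\a$; granted that, your conclusion is cleaner and in fact slightly stronger (the two $\bbbz$-forms are carried into one another by a diagonal automorphism of $\fg$ itself, not merely abstractly isomorphic). The price is that the sign-rectification is the only step carrying real content, and it is exactly where your argument is thinnest: you still need to check $\mu_\b=\pm\eta_\b$ on the non-classical generators $\sg_j-\te_s$, $\sg_j-\te_l$ of the reflectable base (this is Lemma \ref{ned3}(ii) with $\a=\b_j$ and $\b=-\te$), and in the induction along $w_{\b_1}\cdots w_{\b_{k-1}}(\b_k)$ you must jump over isotropic members of a $\b$-string, where Lemma \ref{ned3}(i) is silent; the identity that does the jump is $\eta_{\d-\b}=\pm\,\eta_{\d+\b}\,\eta_\b^{-2}$, obtained from Lemma \ref{ned3}(ii) applied to the pair $\pm\b$. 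With those two points written out, together with the non-simply-laced bookkeeping about which intermediate sums are actually roots, your argument closes; as written they are asserted rather than proved, which is presumably why the paper opts for the squared scalars.
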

  \proof
 In what follows we always assume that $1\leq i\leq\ell$, $1\leq j\leq\nu$, $\a\in R^\times$ and $\d\in R^0\setminus\{0\}$. From Theorem \ref{thmnew1}, we know that the set $\{x_\a,h_i,{c_j}\}$
is  a basis for the subspace
{$(\fg_0\cap\fg_c)\oplus\sum_{\a\in R^\times}\fg_\a$} of $\fg$.
We consider an assignment 
$$\fg_c^\bbbz(x_\a\mid\a\in R^\times)\stackrel{f}{\longrightarrow}\fg_c^\bbbz(\bar{x}_\a\mid\a\in R^\times)$$
$$\begin{array}{c}
h_i\mapsto h_i,\quad
c_j\mapsto c_j,\quad
x_\a\mapsto \eta_{\a}\bar{x}_\a,\\
\sum_{t=1}^nk_tx^{\a_t}_\d\mapsto \sum_{t=1}^nk_t\eta_{\a_t+\d}\eta_{-\a_t}\bar{x}^{\a_t}_\d,
\end{array}
$$
where $\bar{x}^{\a_t}_\d:=[\bar{x}_{\a_t+\d},\bar{x}_{-\a_t}]$, $\a_t\in R^\times$, $k_t\in\bbbz$ and $n\in\bbbz_>0$. 
The first three assignments induces a group homomorphism
$\span_{\bbbz}\{x_\a,h_i,c_j\}\stackrel{f}{\rightarrow}
\span_{\bbbz}\{\bar{x}_\a,h_i,c_j\}$. To extend $f$ to a group homomorphism from $\fg_c^\bbbz(x_\a,h_i,c_j)$ onto
$\fg_c^\bbbz(\bar{x}_\a,h_i,c_j)$, it is enough to show that
if $\sum_{t=1}^{n}k_tx^{\a_t}_\d=0$, $\a_t\in R^\times$,
then 
$\sum_{t=1}^nk_t\eta_{\a_t+\d}\eta_{-\a_t}\bar{x}^{\a_t}_\d=0$. But
$$\sum_{t=1}^nk_t\eta_{\a_t+\d}\eta_{-\a_t}\bar{x}^{\a_t}_\d=\sum_{t=1}^{n}k_t(\eta_{\a_t+\d}\eta_{-\a_t})^2x^{\a_t}_\d
=\eta\sum_{t=1}^{n}k_tx^{\a_t}_\d=0,$$
where $\eta=(\eta_{\a_t+\d}\eta_{-\a_t})^2$, for $1\leq t\leq n$, see Lemma \ref{ned3}.
This shows that $f$ is well defined, and  is a group homomorphism.

We now check that $f$ is a Lie algebra homomorphism. Let $\a,\b\in R^\times$ and $\d,\sg\in R^0$. Then by Lemma \ref{ned3} part (i),
\begin{eqnarray*}
f([x_\a,x_\b])&=&
c_{\a,\b}f(x_{\a+\b})=c_{\a,\b}\eta_{\a+\b}\bar{x}_{\a+\b}=\bar{c}_{\a,\b}\eta_{\a}\eta_{\b}\bar{x}_{\a+\b}\\
&=&
[\eta_{\a}\bar{x}_\a,\eta_{\b}\bar{x}_\b]=
[f(x_\a),f(x_\b)].
\end{eqnarray*}
Next, we consider brackets of the form
$[x_\b,x_\d^\a]$. First note that
$$[\bar{x}_\b,\bar{x}^\a_\d]=\eta_{\b}\eta_{\a+\d}\eta_{-\a}[x_\b,x_\d^\a]=c\eta_{\b}\eta_{\a+\d}\eta_{-\a}x_{\b+\d}=\bar{c}\eta_{\b+\d}x_{\b+\d},$$
where $c,\bar{c}\in\bbbz$, see Theorem \ref{thmnew1}(vi). On the other hand we get from Lemma \ref{ned3} part (ii) that $\eta_{\b+\d}=\pm\eta_{\b}\eta_{\a+\d}\eta_{-\a}$. This together with the last equality gives $c\eta_{\b+\d}=\bar{c}\eta_{\b}\eta_{\a+\d}\eta_{-\a}$. Then   
\begin{eqnarray*}
f([x_\b,x^\a_\d])&=&
cf(x_{\b+\d})=c\eta_{\b+\d}\bar{x}_{\b+\d}=\bar{c}\eta_{\b}\eta_{\a+\d}\eta_{-\a}\bar{x}_{\b+\d}\\
&=&
[\eta_{\b}\bar{x}_\b,\eta_{\a+\d}\eta_{-\a}\bar{x}^\a_\d]=[f(x_\b),f(x^\a_\d)].
\end{eqnarray*}
Finally for brackets of the form $[x^\a_\d,x^\b_\sg]$ we have
\begin{eqnarray*}
	f([x_\d^\a,x_\sg^\b])&=&
	f([[x_{\a+\d},x_{-\a}],x_\sg^\b])\\
	&=&f(-[[x_\sg^\b,x_{\a+\d}], x_{-\a}]-
	[[x_{-\a},x_{\sg}^\b], x_{\a+\d}])\\
	&=&-[[f(x_\sg^\b),f(x_{\a+\d})], f(x_{-\a})]-
	[[f(x_{-\a}),f(x_{\sg}^\b)], f(x_{\a+\d})]\\
	&=&\eta_{\b+\sg}\eta_{-\b}\eta_{\a+\d}\eta_{-\a}(-[[\bar{x}_\sg^\b,\bar{x}_{\a+\d}], \bar{x}_{-\a}]-
	[[\bar{x}_{-\a},\bar{x}_{\sg}^\b], \bar{x}_{\a+\d}])\\
	&=&\eta_{\b+\sg}\eta_{-\b}\eta_{\a+\d}\eta_{-\a}([[\bar{x}_{\a+\d},\bar{x}_{-\a}],\bar{x}_\sg^\b])\\
	&=&[\eta_{\a+\d}\eta_{-\a}\bar{x}^\a_\d,\eta_{\b+\sg}\eta_{-\b}\bar{x}^\b_\sg]=[f(x_\d^\a),f(x_\sg^\b)].
\end{eqnarray*} 
To check that  $f$ preserves the bracket for remaining terms is clear.
This completes the proof.
\qed

 We conclude this section with a remark concerning the extension of the $\bbbz$-form from the core to the entire ground extended affine Lie algebra.
 
 \begin{rem}\label{remned}
 Let $\fg$ be an extended affine Lie algebra with the core $\fg_c$ and the centerless core $\fg_{cc}:=\fg_c/{\mathcal Z}(\fg_c)$. In the theory of extended affine Lie algebras it is well understood that the essential structural features of the theory are encoded in $\fg_{cc}$.
Then one proceeds to promote the results from $\fg_{cc}$ to $\fg_c$ and then to $\fg$. A systematic study of relations between $\fg_c$, $\fg_{cc}$ and $\fg$ is given in \cite{Ne11}. We will consider this approach in a separate project to study integral structures of general extended affine Lie algebras. In Example \ref{exaned} below we consider a special case. 
\end{rem}
  
 \begin{exa}\label{exaned}
 Let $\dot\fg$ be a centerless Lie torus.\ For  this we mean 
 a triple $(\dot\fg,\fm,\dot\fh)$ consisting of a Lie algebra $\dot\fg$, a finite dimensional abelian subalgebra $\dot\fh$ and a non-degenerate invariant bilinear form $\fm$ on $\dot\fg$ satisfying certain conditions given in \cite[Chapter III.\S1]{AABGP97} {(see also \cite{Y06})}.\
 Without going through details, we just report here some of these conditions and their consequences which will be needed for our 
 purposes. \ As an $\dot\fh$-module, $\dot\fg$  has a weight space decomposition
 $\dot\fg=\sum_{\dot\a\in{\dot\fh}^\star}{\dot\fh}_{\dot\a}$ where ${\dot\fg}_{\dot\a}=\{x\in\dot\fg\mid [h,x]=\dot\a(h)x\hbox{ for all }h\in\dot\fh\}$.\
 The set $\dot R$  of weights of $\dot\fg$ is an irreducible finite root system.\
 As a Lie algebra $\dot\fg$ is provided by a $\bbbz^\nu$-grading $\dot\fg=\sum_{\sg\in\bbbz^\nu}{{\dot\fg}^\sg}$ such that
 ${\dot\fg}_{\dot\a}=\sum_{\sg\in\bbbz^\nu}({\dot\fg}^\sg\cap{\dot\fg}_{\dot\a})$, $\dot\a\in\dot R$.\
   
  Define $d_i\in\hbox{Der}(\dot\fg)$ by $d_i(x)=n_ix$, $x\in{\dot\fg}^{(n_1,\ldots,n_\nu)}$, and set ${\mathcal D}=\sum_{i=1}^\nu\bbbc d_i$. The ${\mathcal D}$ is a $\nu$-dimensional subspace of derivation algebra of $\dot\fg$. Let ${\mathcal C}=\sum_{i=1}^\nu\bbbc c_i$
be a $\nu$-dimensional vector space and set $\fg:=\dot\fg\oplus{\mathcal C}\oplus {\mathcal D}$ and $\fh:=\dot\fh\oplus{\mathcal C}\oplus{\mathcal D}$.  The bracket on $\dot\fg$ then extends to a Lie bracket $[\cdot,\cdot]'$ on $\fg$ such that 
$$\mathcal C\hbox{ is central, } 
[x,y]'=[x,y]+\sum_{i=1}^\nu(d_ix,y)c_i\hbox{ and }[d,x]'=d(x),\;
x,y\in\dot\fg,\; d\in{\mathcal D}.$$
 The form on $\dot\fg$ also extends in a natural way to a non-degenerate invariant form on $\fg$.
It turns out that $(\fg,\fm,\fh)$ is a tame extended affine Lie algebra of nullity $\nu$ with core $\fg_c=\dot\fg\oplus{\mathcal C}$. 

Let $\sg_1,\ldots,\sg_\nu$ be the dual basis of $\{d_1,\ldots,d_\nu\}$ with respect to the form. Then $\bbbz^\nu$ can be identified with $\sum_{i=1}^\nu\bbbz\sg_i\sub{\mathcal D}^\star$. The root system $R$ of $\fg$ satisfies
$R\sub\dot R+\bbbz^\nu\sub\dot\fh\oplus D^\star\sub\fh^\star$, where for $0\not=\a=\dot\a+\sg \in R$, $\dot\a\in\dot R$ and $\sg\in\bbbz^\nu$, 
$\fg_\a=\dot\fg_{\dot\a}\cap{\dot\fg}^\sg$ and
$\fg_0=\dot\fh\oplus{\mathcal C}\oplus{\mathcal D}$.

Now we consider a $\bbbz$-form $\fg^\bbbz_c=\fg^\bbbz_c(x_\a\mid\a\in R^\times)$ for 
$\fg_c$. Let 
$$\fg^\bbbz:=\fg_c^\bbbz\oplus\span_{\bbbz}\{d_i\mid 1\leq i\leq\nu\}.$$
If $\a=\dot\a+\sum_{i=1}^\nu k_i\sg_i\in R^\times$, $k_i\in\bbbz$,  then $[d_j,x_\a]=d_j(\sum_{i=1}^\nu k_i\sg_i)x_\a=k_jx_\a$.
It then follows that $\fg^\bbbz$ is a $\bbbz$-Lie algebra.
Since $\fg^\bbbz\otimes_\bbbz\bbbc\cong\fg$, $\fg^\bbbz$ is a 
$\bbbz$ form for $\fg$.
\end{exa}

  \section{chevalley automorphism for multi-loop realization}
\label{chevalley basis and multi-loop realization}
\setcounter{equation}{0}
In this section, we consider the multi-loop algebra based on an extended affine Lie algebra and the algebra of Laurent polynomials. We show that under suitable conditions on the involved automorphisms, the affinization procedure applied to the multi-loop algebra leads to an extended affine Lie algebra. We also investigate how a Chevalley automorphism on the underlying extended affine Lie algebra can be lifted to one on the multi-loop affinization. 

Throughout this section, we assume that $\aa$ is the algebra of Laurent polynomials in variables $x_i^{\pm},$ $1\leq i\leq \nu$, over the field of complex numbers. 
%We also assume that $\fg$ is a finite dimensional complex simple Lie algebra.

\begin{DEF}
	Let $(\fg,\fm,\fh)$ be an extended affine Lie algebra with root system $R$ and $\sg_1,\ldots,\sg_\nu$
	be commuting finite order automorphisms of $\fg$ with periods
	$m_1,\ldots,m_\nu$ respectively. Let $\omega_i$ be a primitive $m_i^{th}$-root of
	unity for $1\leq i\leq\nu$. Then 
	$$
	\fg=\bigoplus_{\lam\in\bbbz^n}\fg^{\bar\lam},
	$$ where for $\lam=(\lam_1,\ldots,\lam_\nu)\in\bbbz^\nu$,
	$\bar\lam:=(\bar\lam_1,\ldots,\bar\lam_\nu)$ with 
	$\bar\lam_j:=\lam_j+m_j\bbbz\in\bbbz_{m_j}$, and
	$$
	\fg^{\bar\lam}=\{x\in\fg\mid\sg_j(x)=\omega^{\lam_j}_{j}x\hbox{
		for }1\leq j\leq\nu\}.
	$$
Let $\pi_{\bar{\lam}}:\fg\rightarrow\fg^{\bar{\lam}}$ denote the projection onto $\fg^{\bar{\lam}}$. The subalgebra
	\begin{equation}\label{ref}
		M(\fg,\sg_1,\ldots,\sg_\nu):=\bigoplus_{\lam\in\bbbz^n}\pi_{\bar{\lam}}(\fg)\otimes
		z^\lam=
\bigoplus_{\lam\in\bbbz^n}\fg^{\bar\lam}\otimes
		z^\lam
	\end{equation}
	of $L(\fg,\aa)=\fg\otimes\aa$ is called the {\it $\nu$-step multi-loop
		algebra} of $\sg_1,\ldots,\sg_\nu$ based on $\fg$. 

\end{DEF}

Consider the multi-loop algebra $M(\fg,\sg_1,\ldots,\sg_\nu)$ and
assume that automorphisms $\sg_i$'s preserve $\fh$.
As before, we set $\v:=\Lam\otimes\bbbc$. We extend each automorphism $\sg_i$ of $\fg$ to $\hat{L}(\fg,\aa)=\fg\otimes\aa\oplus\v\oplus\v^\star$ by
\begin{equation}\label{eq13}
	\bar\sg_i(x\otimes z^\lam+d+\gamma)=\sg_i(x)\otimes\omega_i^{-\bar\lam_i}z^\lam+d+\gamma,
\end{equation}
for 
$x\in\fg$, $\lam=(\lam_1,\ldots,\lam_\nu)\in\Lam$, $d\in\v$ and
$\gamma\in\v^\star$. Then
$$\hat{M}:=\hat{M}(\fg,\sg_1,\ldots,\sg_\nu):=M(\fg,\sg_1,\ldots,\sg_\nu)\oplus\v\oplus \v^\star$$  is the fixed point subalgebra
of $\hat{L}(\fg,\aa)$ under automorphisms $\bar\sg_1, \ldots,\bar\sg_\nu$. We set $\hat\fh:=(\fh^{\bar 0}\otimes 1)\oplus\v\oplus\v^\star$.

{From Section \ref{Chevalley automorphism and extended affinization} we know that  $\hat{L}(\fg,\aa)$ has a root space decomposition with respect to $\fh\otimes 1\oplus\v\oplus\v^\star$. So it has a weight space decomposition with respect to $\hat{\fh}\subseteq\fh\otimes 1\oplus\v\oplus\v^\star$. Then it follows from \cite[Proposition II.1]{MP95} that $\hat{M}$ also has a weight space decomposition with respect to $\hat{\fh}$. We denote the set of weights of $\hat{M}$ with respect to $\hat{\fh}$ by $\hat{R}$.} Recall from Section \ref{sub-sec-EALA} that the form on $\fh$ transfers to a form on $\fh^\star$, and also that  the form on $\fg$ extends to a non-degenerate symmetric invariant form on $\hat{L}(\fg,\aa)$, see
(\ref{formloop}).

We denote the set of fixed points of
$\sg_1,\ldots,\sg_i$ on $\fg$ and $\fh$ by
$\fg^{\sg_1,\ldots,\sg_i}$ and $\fh^{\sg_1,\ldots,\sg_i}$, respectively. 
Finally,  we denote the map $\pi=\pi_0$ defined by (\ref{eqram}) corresponding to $\sg_i$ by $\pi^i$.

\begin{thm}\label{thm12}
	Suppose that $(\fg,\fm,\fh)$ is an extended affine Lie algebra with root system $R$
	and $(\sg_1,\ldots,\sg_\nu)$ is an ordered tuple of commuting finite order automorphisms of $\fg$ satisfying for
	$1\leq i\leq\nu$,
	
	- $\sg_i(\fh)=\fh,$
	
	- $(\sg_i(x), \sg_i(y))=(x, y)$ for $x, y\in\fg,$
	
	- {$ C_{\fg^{\sg_1,\ldots,\sg_i}}(\fh^{\sg_1,\ldots,\sg_i})\sub\fh^{\sg_1,\ldots,\sg_i},$}
	
	- {$(\a,\pi^1\cdots\pi^\nu(\a))\neq 0$, for some $\a\in R.$}
	
	\noindent {Then $(\hat{M},\fm,\hat\fh)$ is an extended affine Lie algebra with root system $\hat R$, where  $\fm$ is the form (\ref{formloop}) on
		$\hat{L}(\fg,\aa)$ restricted to $\hat{M}$. Moreover
$\hat R=\pi^1\cdots\pi^\nu(R)$, mod Rad$\fm$.}
\end{thm}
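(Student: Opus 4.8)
The plan is to argue by induction on the number $\nu$ of automorphisms, realizing $\hat M$ as an iterated single-variable affinization and applying Theorem \ref{thm09} at each stage. When $\nu=1$ the tuple is a single finite-order automorphism $\sg_1$, and the four displayed hypotheses together with finiteness of order are precisely conditions \eqref{eq1} (the fourth hypothesis becoming $(\a,\pi^1(\a))\ne 0$); hence $\hat M=\hat L_{\rho}(\fg,\bbbc[x_1^{\pm}])$ is an extended affine Lie algebra with $\hat R\sub\pi^1(R)\oplus\Lam$ by Theorem \ref{thm09}, which is the asserted $\hat R=\pi^1(R)$ modulo $\mathrm{Rad}\,\fm$.

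For the inductive step I would first affinize $\fg$ with respect to $\sg_1$ alone, obtaining the extended affine Lie algebra $\hat\fg^{(1)}:=\hat L_{\rho_1}(\fg,\bbbc[x_1^{\pm}])$. The remaining automorphisms extend via \eqref{eq13} to commuting finite-order automorphisms $\bar\sg_2,\dots,\bar\sg_\nu$ of $\hat\fg^{(1)}$ that fix the adjoined elements of $\v$ and $\v^\star$; by associativity of the loop construction and by \eqref{eq13}, the $(\nu-1)$-step multi-loop affinization of $\hat\fg^{(1)}$ relative to $(\bar\sg_2,\dots,\bar\sg_\nu)$ is exactly $\hat M$, the Cartan and the central and degree directions accumulating to $\hat\fh$, $\v$ and $\v^\star$. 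It then suffices to check the four hypotheses for $\hat\fg^{(1)}$ and $(\bar\sg_2,\dots,\bar\sg_\nu)$ and to invoke the induction hypothesis.

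Preservation of $\hat\fh$ and of the form follow at once from \eqref{eq13}, and each centralizer condition for $\hat\fg^{(1)}$ reduces to the given condition $C_{\fg^{\sg_1,\dots,\sg_i}}(\fh^{\sg_1,\dots,\sg_i})\sub\fh^{\sg_1,\dots,\sg_i}$, since the adjoined elements of $\v\oplus\v^\star$ are central or act as degree derivations and therefore contribute nothing outside $\hat\fh$. The step I expect to be the main obstacle is the propagation of the nonisotropy hypothesis, because one is only given $(\a,\pi^1\cdots\pi^\nu(\a))\ne 0$ for a single $\a\in R$, whereas the induction hypothesis needs the analogous inequality for the composite $\bar\pi^2\cdots\bar\pi^\nu$ on $\hat\fg^{(1)}$. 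The resolution rests on the observation that, since $\pi^i=\tfrac{1}{m_i}\sum_k\sg_i^k$ and each $\sg_i$ preserves the form, every $\pi^i$ is a self-adjoint idempotent for $\fm$, and the $\pi^i$ commute because the $\sg_i$ do; thus $P:=\pi^1\cdots\pi^\nu$ is the projection onto $(\fh^{\sg_1,\dots,\sg_\nu})^\star$ and satisfies $\pi^1P=P$. Choosing $\lam$ with $\fg^{\bar\lam}_\a\ne\{0\}$ (possible since $\fg_\a\ne\{0\}$), the element $\hat\b:=\pi^1(\a)+\lam$ is a root of $\hat\fg^{(1)}$ by \eqref{234}, and because the degree direction $\lam$ is isotropic and orthogonal to $(\fh^{\sg_1})^\star$ one computes $(\hat\b,\bar\pi^2\cdots\bar\pi^\nu(\hat\b))=(\pi^1(\a),P(\a))=(\a,\pi^1P(\a))=(\a,P(\a))\ne 0$. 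This supplies exactly the nonisotropy needed, so the induction hypothesis applies and $(\hat M,\fm,\hat\fh)$ is an extended affine Lie algebra.

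Finally, with the inductive structure in place the root system is identified by iterating the single-variable inclusion $\hat R\sub\pi(R)\oplus\Lam$: after $\nu$ steps one obtains $\hat R\sub\pi^1\cdots\pi^\nu(R)\oplus\bbbz^\nu$, where the lattice $\bbbz^\nu$ of degree directions lies in $\mathrm{Rad}\,\fm$ restricted to the span of $\hat R$; conversely each $\pi^1\cdots\pi^\nu(\a)$ occurs as the non-isotropic part of a root, since $\fg_\a\ne\{0\}$ survives the construction in some degree. Hence $\hat R=\pi^1\cdots\pi^\nu(R)$ modulo $\mathrm{Rad}\,\fm$, completing the proof.
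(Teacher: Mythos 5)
Your proof is correct and follows essentially the same route as the paper: induction on the number of automorphisms with Theorem \ref{thm09} supplying each single-step affinization, and the non-isotropy hypothesis propagated through the self-adjoint commuting projections $\pi^i$ exactly as the paper does via (\ref{eqram3}). The only difference is cosmetic --- the paper accumulates $\hat{M}_1,\hat{M}_2,\ldots$ and adjoins $\bar\sg_k$ at step $k$, whereas you peel off $\sg_1$ first and recurse on the remaining tuple acting on $\hat\fg^{(1)}$.
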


\proof
We recall that the form on $\hat{L}(\fg,\aa)$  defined by (\ref{formloop}) is symmetric, invariant and non-degenerate. 
We use induction
on the number of automorphisms $\sg_i$. For this we fix a $\bbbz$-basis $\d_1,\ldots,\d_\nu$ of $\Lam$. Then for $1\leq k\leq\nu$, we set
$$\begin{array}{l}
	\Lam_k:=\sum_{i=1}^k\bbbz\d_i,\\
	\v_k:=\Lam_k\otimes\bbbc,\\
	M_k:=M(\fg,\sg_1,\ldots,\sg_k),\\
	\hat{M}_k:=M_k\oplus\v_k\oplus\v_k^\star,
\end{array}
\andd
\begin{array}{l}
	\fg_k:=\fg^{\sg_1,\ldots,\sg_k},\\
	\fh_k:=\fh^{\sg_1,\ldots,\sg_k},\\
	\hat{\fh}_{k}:=
	\fh_k\otimes 1\oplus\v_k\oplus\v_k^\star.
\end{array}
$$ 
We consider the natural embeddings $\Lam_k\hookrightarrow\Lam_{k+1}$, $\v_k\hookrightarrow\v_{k+1}$. We note that each automorphism
$\bar\sg_i$ of $\hat{M}$ defined by (\ref{eq13}) restricts to an automorphism
of $\hat{M}_k$. {We denote the set of weights of $\hat{M}_{k}$ with respect to $\hat{\fh}_{k}$ by $\hat{R}_{k}$.}

Before starting the induction, we claim that for each $1\leq k\leq\nu$,
 \begin{equation}\label{eqram2}
(\b_k,\pi^k(\b_k))\not=0,\hbox {for }\b_k:=\pi^1\cdots\pi^{k-1}(\a),
\end{equation} 
where $\a$ is as in the statement. {In fact by \ref{eqram3} we have 
$$(\b_k,\pi^k(\b_k))=(\pi^1\cdots\pi^{k}(\a),\pi^1\cdots\pi^k(\a)).$$
On the other hand applying (\ref{eqram3}) repeatedly we see that
\begin{eqnarray*}
	(\pi^1\cdots\pi^\nu(\a),\pi^1\cdots\pi^\nu(\a))&=&
	(\pi^2\cdots\pi^\nu(\a),\pi^1\cdots\pi^\nu(\a))\\
	&=&
	(\pi^2\cdots\pi^\nu(\a),\pi^2\pi^1\pi^3\cdots\pi^\nu(\a))\\
	&=&
	(\pi^3\cdots\pi^\nu(\a),\pi^3\pi^1\pi^2\pi^4\cdots\pi^\nu(\a))\\
	&\vdots&\\
	&=&(\a,\pi^1\cdots\pi^\nu(\a))\neq0.
\end{eqnarray*}
Now we get $(\pi^1\cdots\pi^{k}(\a),\pi^1\cdots\pi^k(\a))\neq0$ since each $\pi^i$ maps Rad$\fm$ into Rad$\fm$. So $(\b_k,\pi^k(\b_k))\neq0$, as was claimed.} 

{Now if we have just one automorphism, then by Theorem \ref{thm09}, {$\hat{M_1}=\hat{L}(\fg,\sg_1)$ is an extended affine Lie algebra with root system $\hat{R}_1=\pi^1(R)$, mod Rad$\fm$}.  To proceed with the induction step, assume next that $(\hat{M}_{k-1},\fm,{\hat\fh}_{k-1})$ is an extended affine Lie algebra with root system
$\pi^1\cdots\pi^{k-1}(R)$, mod Rad$\fm$. We  note that our assumptions on $\sg_i$'s guarantee that,}

- {$\bar\sg_k(\hat{\fh}_{k-1})=\hat{\fh}_{k-1},$}

- {$(\bar\sg_k(x), \bar\sg_k(y))=(x, y)$ for $x, y\in\hat{M}_{k-1},$}

- {$C_{\hat{M}_{k-1}^{\bar\sg_k}}(\hat{\fh}_{k-1}^{\bar\sg_k})\sub\hat{\fh}_{k-1}^{\bar\sg_k}$.}

\noindent
{These together with (\ref{eqram2}) therefore implies that the automorphism ${\bar\sg}_k$ and the extended affine Lie algebra
$\hat{M}_{k-1}$ satisfy (\ref{eq1}). Now since $\hat{M}_{k}=
\hat{L}_{\rho_k}(\hat{M}_{k-1},\bar\sg_k),$
where here  $\rho_k:\bbbz\d_k\rightarrow\bbbz_{m_k}$ is given by
$\lam\d_k\mapsto\bar{\lam}$, we conclude from
Theorem \ref{thm09} that $(\hat{M}_k,\fm,\hat\fh_k)$ is an extended affine Lie algebra with the required root system.}\qed

\begin{exa}\label{exa13}{Let $(\fg,\fm,\fh)$ be the finite dimensional simple Lie algebra of type $E_7$ and $\{h_{i},E_{\pm i}\mid 1\leq i\leq7\}$ be the Chevalley generators of $\fg$. We consider the order $2$ diagonal automorphism of $\fg$ defined on the Chevalley generators by
		$$\sg_{1}(E_{\pm 7})=-E_{\pm 7},\quad\sg_{1}(E_{\pm i})=E_{\pm i}, \quad\text{for}\; i\neq 7,$$
		and also the order $2$ automorphism $\sg_2$ of $\fg$ defined on the Chevalley generators by
		$$\begin{array}{c}
			\sg_{2}(E_{\pm 1})=E_{\pm 6},\quad\sg_{2}(E_{\pm 3})=E_{\pm 5},\\
			\sg_{2}(E_{\pm 7})=E_{\mp\theta},\quad\sg_{2}(E_{\pm i})=E_{\pm i}, \quad\text{for}\; i=2,4,
		\end{array}$$
		where $\theta$ is the highest root and $E_{\theta},E_{-\theta}$ are root vectors corresponding to highest and lowest roots, respectively. Clearly $\sg_i(\fh)=\fh$, for $i=1,2$. Since $(\fg_{\a},\fg_{\b})=\{0\}$ unless $\a+\b=0$, then 
		$$(\sg_i(x), \sg_i(y))=(x, y), \quad\text{for} \;x, y\in\fg,\; i=1,2.$$
		Since $\sg_{1}(\a)=\a$, for all $\a\in\fh^*$, we get from \cite[Proposition 3.25]{ABP02} that $C_{\fg^{\sg_1}}(\fh^{\sg_1})\subseteq\fh^{\sg_1}$. Next we show that $C_{\fg^{\sg_{1},\sg_{2}}}(\fh^{\sg_{1},\sg_{2}})\subseteq\fh^{\sg_{1},\sg_{2}}$. First we note that $\fg^{\sg_{1},\sg_{2}}$ is the simple Lie algebra of type $F_4$,  see \cite[Section 4]{Van01}. So it is enough to prove that $\fh^{\sg_{1},\sg_{2}}=\fh^{\sg_2}$ is a Cartan subalgebra of $\fg^{\sg_{1},\sg_{2}}$. By \cite[Lemma 8.1]{Kac90}, $C_{\fg}(\fh^{\sg_2})$ is a Cartan subalgebra of $\fg$. Since $\fh\subseteq C_{\fg}(\fh^{\sg_2})$ we have $\fh=C_{\fg}(\fh^{\sg_2})$. If $\fh^\prime\supseteq\fh^{\sg_2}$ is a Cartan subalgebra of $\fg^{\sg_{1},\sg_{2}}$, then $\fh^\prime\subseteq C_{\fg}(\fh^{\sg_2})=\fh$. So $\fh^\prime\subseteq\fh\cap\fg^{\sg_{1},\sg_{2}}=\fh^{\sg_{2}}$. Hence $\fh^{\sg_{2}}=\fh^\prime$ and $\fh^{\sg_{2}}$ is a Cartan subalgebra of $\fg^{\sg_{1},\sg_{2}}$. On the other hand $(\a_2,\pi^2\pi^1(\a_2))=(\a_2,\a_2)\neq0$. {Thus the ordered pair $(\sg_1,\sg_2)$ satisfies conditions of Theorem \ref{thm12} and so} $(\hat{M},\fm,\hat\fh)$ is an extended affine Lie algebra. {In fact $\hat{M}$ is a nullity $2$ extended affine Lie algebra of type $F_4$.}}
\end{exa}
	
\begin{rem}
{Section $4$ of \cite{Van01} provides us more examples satisfying the conditions of Theorem \ref{thm12}. In fact by a similar discussion as in Example \ref{exa13} one can apply Theorem \ref{thm12} to simple Lie algebras with pairs of automorphisms given in $4.3$ of \cite{Van01} to produce more EALAs of nullity $2$.}
\end{rem}

{The following definition generalizes the notion of a Chevalley pair for an automorphism Defined in \ref{chev1} to the case of a tuple of commuting finite order automorphisms.} 

\begin{DEF}
{Let $(\fg,\fm,\fh)$ be an extended affine Lie algebra and $\underline{{\mathbf{\sg}}}=(\sg_{1},\ldots,\sg_{\nu})$ be a $\nu$-tuple of commuting finite order automorphisms of $\fg$. A pair $(\tau,\psi)$ of finite order automorphisms of $\fg$ is called a {\it $\underline{{\mathbf{\sg}}}$-Chevalley pair} if}

	- $\tau$ is a Chevalley automorphism, 
	
	- $\sg_i\tau=\tau\sg_i$, for $i=1,\ldots,\nu$
	
	- $\psi(\fg^{\bar\lam})=\fg^{-\bar\lam}$, for $\lam\in\Lam$,
	
	- $\psi$ preserves the form,
	
	- $\psi(h)=h$ for $h\in\fh^{\bar{0}}$.
\end{DEF}

\begin{pro}\label{lem12}
	{Assume that $(\fg,\fm,\fh)$ is an extended affine Lie algebra and the multi-loop algebra $\hat{M}(\fg,\underline{\sg})$ is an extended affine Lie algebra obtained from Theorem \ref{thm12}. Assume that $\fg$ admits a $\underline{{\mathbf{\sg}}}$-Chevalley pair $(\tau,\psi)$. Then  the assignment
		$$\pi_{\bar\lam}(x)\otimes a^\lam+\gamma+d\mapsto \psi\tau(\pi_{\bar\lam}(x))\otimes a^{-\lam}-\gamma-d,$$ 
		($x\in\fg,$ $\lam\in\Lam,$ $\gamma\in\v,$ $d\in\v^\star$) defines a finite order automorphism ${\bar\tau}_{\psi}$ of $\hat\ll(\fg,\aa)$ which restricts to a Chevalley automorphism of $\hat{M}(\fg,\underline{{\mathbf{\sg}}})$.}
\end{pro}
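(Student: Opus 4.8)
The plan is to mirror the strategy of Proposition \ref{pro115}, treating the multi-loop case as a $\nu$-fold iteration of the single-automorphism construction. The key observation is that $\hat{M}(\fg,\underline{\sg})$ was built in the proof of Theorem \ref{thm12} as an iterated extended affinization $\hat{M}_k=\hat{L}_{\rho_k}(\hat{M}_{k-1},\bar\sg_k)$, so the multi-loop algebra sits inside $\hat{L}(\fg,\aa)=\fg\otimes\aa\oplus\v\oplus\v^\star$ as the common fixed-point subalgebra of the extended automorphisms $\bar\sg_1,\ldots,\bar\sg_\nu$ defined by (\ref{eq13}). First I would invoke the general fact already established inside the proof of Proposition \ref{pro115}: for any form-preserving automorphism $\Phi$ of $\fg$, the assignment $\pi_{\bar\lam}(x)\otimes a^\lam+\gamma+d\mapsto\Phi(\pi_{\bar\lam}(x))\otimes a^{-\lam}-\gamma-d$ defines an automorphism $\bar\Phi$ of $\hat{L}(\fg,\aa)$ with inverse $\overline{\Phi^{-1}}$. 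Applying this with $\Phi=\psi\tau$, which is form-preserving because $\tau$ preserves the form by Lemma \ref{chev4}(ii) and $\psi$ preserves the form by hypothesis, immediately yields that $\bar\tau_\psi$ is a finite order automorphism of $\hat{L}(\fg,\aa)$.

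The substantive step is to check that $\bar\tau_\psi$ stabilizes the subalgebra $\hat{M}(\fg,\underline{\sg})$. Here I would use the decomposition $\hat{M}=\bigoplus_\lam \fg^{\bar\lam}\otimes z^\lam\oplus\v\oplus\v^\star$ from (\ref{ref}), so it suffices to see that $\bar\tau_\psi(\fg^{\bar\lam}\otimes z^\lam)\subseteq\fg^{-\bar\lam}\otimes z^{-\lam}$. Since $\tau$ commutes with each $\sg_i$ (the second axiom of a $\underline{\sg}$-Chevalley pair), $\tau$ preserves every simultaneous eigenspace $\fg^{\bar\lam}$, and since $\psi(\fg^{\bar\lam})=\fg^{-\bar\lam}$ (the third axiom), the composite $\psi\tau$ sends $\fg^{\bar\lam}$ to $\fg^{-\bar\lam}$. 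Because on $\fg^{\bar\lam}$ the projection $\pi_{\bar\lam}$ acts as the identity, we get $\bar\tau_\psi(x\otimes z^\lam)=\psi\tau(x)\otimes z^{-\lam}\in\fg^{-\bar\lam}\otimes z^{-\lam}$, which is precisely the homogeneous piece of $\hat{M}$ indexed by $-\lam$; the action on $\v\oplus\v^\star$ is by $-\id$ and therefore also preserves $\hat{M}$. This establishes $\bar\tau_\psi(\hat{M})\subseteq\hat{M}$, and the same argument for $\overline{(\psi\tau)^{-1}}$ gives surjectivity, so the restriction is an automorphism of $\hat{M}$.

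Finally I would verify the defining condition of a Chevalley automorphism (Definition \ref{chev1}(i)) for the restriction, namely that it acts as $-\id$ on the Cartan subalgebra $\hat\fh=(\fh^{\bar 0}\otimes 1)\oplus\v\oplus\v^\star$. For $h\in\fh^{\bar 0}=\fh^{\sg_1,\ldots,\sg_\nu}$ we have $\psi\tau(h)=\psi(-h)=-\psi(h)=-h$ by the fifth axiom $\psi_{|_{\fh^{\bar 0}}}=\id$, so $\bar\tau_\psi(h\otimes 1)=-h\otimes 1$; on $\v$ and $\v^\star$ the map is visibly $-\id$ by its very definition. Hence $\bar\tau_\psi$ restricts to a finite order automorphism of $\hat{M}(\fg,\underline{\sg})$ acting as $-\id$ on $\hat\fh$, i.e.\ a Chevalley automorphism. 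I expect the main obstacle to be purely bookkeeping: one must be careful that the multi-index grading $\bar\lam=(\bar\lam_1,\ldots,\bar\lam_\nu)$ and the twisting factors $\omega_i^{-\bar\lam_i}$ in (\ref{eq13}) are compatible with the single-cocycle formulation used in Proposition \ref{pro115}, but since the fixed-point description of $\hat{M}$ is intrinsic (independent of the order in which the $\bar\sg_i$ are imposed), no genuinely new computation beyond the eigenspace-tracking above is required.
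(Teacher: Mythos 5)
Your proposal is correct and follows exactly the route the paper intends: the paper's own proof of Proposition \ref{lem12} consists of the single remark that it is ``just as that of Proposition \ref{pro115}'', and your argument is precisely that proof carried out with $\Phi=\psi\tau$, plus the (correct) eigenspace-tracking needed to see that $\hat{M}(\fg,\underline{\sg})$ is stabilized. Nothing further is required.
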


\proof
{The proof is just as that of Proposition \ref{pro115}.}

\begin{exa}\label{newex1}
	{Let $\fg$ and $(\sg_1,\sg_2)$ be as in Example \ref{exa13}.
		Let $\tau$ be the Chevalley automorphism of $\fg$ such that
		$$\tau(E_{\pm i})=-E_{\mp i},\quad\tau(h_{i})=-h_{i}, \quad\text{for}\; i=1,\cdots,7.$$
		Clearly $\tau\sg_{i}=\sg_{i}\tau$, for $i=1,2$ and $(\tau,id)$ is a $\underline{{\mathbf{\sg}}}$-Chevalley pair with $\underline{{\mathbf{\sg}}}=(\sg_{1},\sg_{2})$. So by Proposition \ref{lem12}, $\bar{\tau}_{id}$ is a Chevalley automorphism for $\hat{M}$.
	}
\end{exa}

\begin{rem} 
	One knows that the centerless cores of almost all  extended affine Lie algebras can be realized as multi-loop algebras based on a finite dimensional central simple Lie algebra; the exception is for type
	$A_\ell$ (see \cite{ABFP08} and \cite{Ne04-1}). The results in Sections \ref{Chevalley automorphism and extended affinization} and \ref{chevalley basis and multi-loop realization} show that for an extended affine Lie algebra obtained from a multi-loop realization (as in Theorem \ref{thm12}), a Chevalley automorphism can be achieved starting from a Chevalley automorphism for  a finite dimensional central simple Lie algebra. Then this together with {the results of Section \ref{Integral structure of the core}} can be used to construct a {$\bbbz$-form} for the core of the corresponding extended affine Lie algebra.  
\end{rem}

\section{Applications}\label{sec:appl}
\setcounter{equation}{0}

In this section we present some natural applications of the $\bbbz$-forms obtained for EALAs in the previous sections.\ We always consider those EALAs that admit a $\bbbz$-form in this section.\ First we use these $\bbbz$-forms to define extended affine Lie algebras over fields of positive characteristic then we define certain groups associated to them.

\subsection{EALAs over arbitrary fields}\label{subsecAlarbF}
\setcounter{equation}{0}
Extended affine Lie algebras are originally defined over the field of complex numbers while, up to a tameness condition, the theory can also be developed over arbitrary fields of characteristic zero (see \cite{Ne04}). Here we define EALAs over arbitrary fields via $\bbbz$-forms that is a generalization of the same procedure in the classical context.

For this, we show how one can use the {integral structure (Chevalley construction)} constructed in Section {\ref{Integral structure of the core}} to reduce the coefficients of an extended affine Lie algebra over $\mathbb{C}$ to arbitrary fields.\ 

Assume that $(\fg,\fm,\fh)$ is a tame extended affine Lie algebra of reduced type and of rank $>1$.
We recall from Section \ref{Integral structure of the core} that
the core $\fg_c$ of $\fg$ is equipped with an integral structure $\fg^\bbbz_c$ which is unique up to isomorphism, see Theorem \ref{simply}.

\begin{DEF}\label{defK-form}
	Let $\mathbb{F}_{p}$ be the prime field of characteristic $p$.\ Since $\fg_{c}^{\bbbz}$  is closed under the Lie bracket, $\fg_{c}^{\mathbb{F}_{p}}:=\fg_{c}^{\bbbz}\otimes_{\bbbz}\mathbb{F}_{p}$ has a Lie algebra structure in a natural way.\ Now for any field extension $\mathbb{K}$ of $\mathbb{F}_{p}$, we define $\fg_{c}^{\mathbb{K}}:=\fg_{c}^{\mathbb{F}_{p}}\otimes_{\mathbb{F}_{p}}\mathbb{K}\cong\fg_{c}^{\bbbz}\otimes_{\bbbz}\mathbb{K}$.\ Considering Remark \ref{remned}, we call (the natural extension of) $\fg_{c}^{\mathbb{K}}$ an {\it extended affine Lie algebra over $\mathbb{K}.$} 
\end{DEF}

\subsection{Groups of extended affine Lie type over fields (Adjoint type)}\label{GEALT ad-form}
\setcounter{equation}{0}	
In \cite{AP19} we studied certain groups associated to tame reduced extended affine Lie algebras.\ A method used in \cite{AP19} was the integration procedure (see e.g., \cite{Kac2,Kac85,Ma18}).\ This integration is not applicable while working with fields of positive characteristic.\ To overcome this issue, one can use the $\mathbb{Z}$-form $\fg_c^\bbbz$ (see Theorem~\ref{thmnew5}) to construct groups of extended affine Lie type over arbitrary fields.\ Also recall from \cite[Proposition 4.22]{AP19} that the groups that are associated to $\fg$ and $\fg_c$ via integration are isomorphic.\ Therefore, in this section, we only consider $\fg_c$ and the group associated to it.\ 

Throughout this subsection, we assume $\g$ to be an EALA with the conditions in \cite{AP19} such that there exists a group associated to it via integration.\
Let $G$ denote the group associated to $\fg_c$.\ More precisely, if $\hat{G}:=\ast_{\alpha\in R^{\times}}\g_{\alpha}$ denotes the free product of abelian additive groups $\g_{\alpha}$, then $G$ is the quotient of $\hat{G}$ over the intersection of  kernels of all integrable representations of $\g$ (see e.g., \cite{Kry95}, \cite{Kac2} for more details).\ Recall from \cite[\S4.2]{AP19} that the adjoint form of $G$ is defined as follows.
\begin{equation}\label{Gad}
	\text{Ad}(G):=\langle \text{exp}(\text{ad}(cx_{\a}))~|~ \a\in R^{\times}, c\in\bbbc\rangle\leq G\leq\text{Aut}(\g).
\end{equation}
Also, $\fg_c$ is integrable in the sense of \cite{Kac2} and we have 
\begin{equation}\label{eq:weylgraction}
	\text{ad}(\text{exp(\text{ad}(y))}(x))=\text{exp}(\text{ad}(x))\text{ad}(y)\text{exp}(-\text{ad}(x)).
\end{equation}
%that if we denote the center of $G$ by $C$, then the following exact sequence of groups exists (see \cite[\S 1.6]{Kac2} for details):
%\begin{equation}\label{eq:exactsequenceAdG}
%	1\mapsto C\mapsto G \mapsto \text{Ad}(G) \mapsto 1.
%\end{equation}

In Subsection~\ref{subsecAlarbF}  we saw that $\fg_{c}^{\mathbb{K}}$ only depends on $(\fg, \mathbb{K})$.\ Therefore, here we consider a fixed $\bbbz$-form and start constructing the desired groups associated to $\fg_{c}^{\mathbb{K}}$ by turning every $\fg_{c}$-module into a $\fg_{c}^{\mathbb{K}}$-module.\ For this, we start off by considering  $\fg_{c}$ as a  $\fg_{c}$-module via the adjoint action.\ Let $\mathfrak{sl}_2^{\a}$ denote the rank one subalgebra of $\fg_c$ for every non-isotropic root $\a$.\ Then recall that by \cite[Proposition 4.13]{AP19} the adjoint map from $\mathfrak{sl}_2^{\a}$ to $\fg_c$ integrates to the following map:
\begin{equation}\label{eq:ProSL2modii00}
	\\ \text{Ad}^{\a}:\SL(\bbbc)^{\a}\to \text{Ad}(G)\leq\text{Aut}(\mathfrak{g}_c),
\end{equation}	
such that for all $t\in \bbbc$
\begin{equation}\label{eq:ProSL2modii01}
	\text{Ad}^{\a}\left(\left(
	\begin{array}{cc}
		1 & t \\
		0 & 1 \\
	\end{array}
	\right)\right)=\exp~t~\text{ad}(x_{\a}),~~~\text{Ad}^{\a}\left(\left(
	\begin{array}{cc}
		1 & 0 \\
		t & 1 \\
	\end{array}
	\right)\right)=\exp~t~\text{ad}(x_{-\a}).
\end{equation}
But from the construction of $\fg_c^\bbbz$, it is clear that if we restrict the above argument to $\mathfrak{sl}_2^{\a}\cap\fg_c^\bbbz$, we have 
\begin{equation}\label{eq:ProSL2modii00Z}
	\\ \text{Ad}_{\bbbz}^{\a}:\SL(\bbbz)^{\a}\to \text{Ad}(G)_{\bbbz}\leq\text{Aut}(\mathfrak{g}_c),
\end{equation}	
where 
\begin{equation}\label{GZ06}
	\text{Ad}(G)_{\bbbz}:=\langle \text{exp}(\text{ad}(cx_{\a}))~|~ \a\in R^{\times}, c\in\bbbz\rangle<\text{Ad}(G)\leq\text{Aut}(\fg_c).
\end{equation}
Note that the image of $\text{Ad}_{\bbbz}^{\a}$ lies in $\text{Ad}(G)_{\bbbz}$ since by Lemma~\ref{lemassump2}, $\fg_c^\bbbz$ is stable under $\text{exp}(\text{ad}(x_{\a}))$ {for} $\a\in R^\times$.\ 

In order to define the adjoint form on arbitrary fields, we consider $\fg_{c}^{\mathbb{K}}$ as a $\fg_{c}^{\mathbb{K}}$-module via the adjoint action as described above where $\bbbk$ is considered as an extension of $\bbbf_{p}$ for some prime number $p$.\ More precisely, the natural adjoint action $\text{ad}^{\bbbk}$ of $x\otimes r\in\fg_{c}^{\mathbb{K}}$ on $\fg_{c}^{\mathbb{K}}$ is defined by $$\text{ad}(x)(v)\otimes r\cdot s$$
for every $v\otimes s \in\fg_{c}^{\mathbb{K}}\cong\fg_{c}^{\bbbz}\otimes_{\bbbz}\mathbb{K}.$\ Now by the same integration procedure as above, we obtain the adjoint form over $\mathbb{K},$ namely, we have 
\begin{equation}\label{eq:ProSL2modii00K}
	\\ \text{Ad}_{\bbbk}^{\a}:\SL(\bbbk)^{\a}\to \text{Ad}(G)_{\bbbk}\leq\text{Aut}(\mathfrak{g}_{c}^{\mathbb{K}}),
\end{equation}	
where 
\begin{equation}\label{GKK}
	\text{Ad}(G)_{\bbbk}:=\langle \text{exp}(\text{ad}^{\bbbk}(x_{\a}\otimes c))~|~ \a\in R^{\times}, c\in\bbbk\rangle\leq\text{Aut}(\fg_{c}^{\mathbb{K}}).
\end{equation}
We nominate the resulting group $\text{Ad}(G)_{\bbbk}$ as the adjoint form of the group associated to $\fg_c$ over $\bbbk.$\ Note again that the image of $\text{Ad}_{\bbbk}^{\a}$ lies in $\text{Ad}(G)_{\bbbk}$ by Lemma~\ref{lemassump2}.

\begin{rem}
	The above procedure suggests a similar construction of groups of extended affine Lie type associated to integrable highest weight representations (for such representations see e.g., \cite{GZ06,Z11}). We intent to investigate this in a separate study.\ Furthermore, note that since the representation theory of extended affine Lie algebras has not been comprehensively characterized by integrable highest weight representations, finding a $\bbbz$-form in any integrable representation is another open problem and hence, at this stage, we are not able to define a group of the universal type $G$ over arbitrary fields.\  
\end{rem}

\begin{rem}
	Note that the above construction works for rings as well but in the case that the underlying ring is not Euclidean, instead of $\SL$ one has to work with the group generated by transvections, namely $E_{2}$.  
\end{rem}

Next we investigate groups of extended affine Lie type over finite fields.\ For this first recall from \cite[Definition 4.1]{AP19} that for a unital associative ring $A$, the Steinberg group of type $\rd$ over $A$ is denoted by $\St_{\dot{R}}(A)$ where $\rd$ is as in Section~\ref{sub-sec-EALA}.\ In particular, if $\bbbc_{\sigma}$ denotes the coordinate ring of $\g$ as defined in Section~\ref{Chevalley automorphism and extended affinization} for some 2-cocycle $\sigma$, then the Steinberg group of type $\rd$ over $\bbbc_{\sigma}$ is denoted by $\St_{\dot{R}}(\bbbc_{\sigma})$.\  We further assume that $\sigma$ is elementary, meaning, the image of $\sigma$ lies in $\{\pm1\}$.\ Therefore, one can readily define $\bbbk_{\sigma}$ for any field $\bbbk.$ 

\begin{lem}\label{lemStfin}
	The Steinberg group $\St_{\dot{R}}(\bbbk_{\sigma})$ for an elementary 2-cocycle $\sigma$ over a finite field $\bbbk$ {with characteristic at least 5,} is finitely generated.
\end{lem}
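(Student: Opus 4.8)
The plan is to produce an explicit finite generating set and to use the Steinberg relations to show it generates the whole group. First I would observe that the coordinate ring $\bbbk_{\sigma}$ is a \emph{finitely generated associative ring}. Indeed, since $\sigma$ is elementary it is the twisted group algebra $\bbbk_{\sigma}=\bigoplus_{\lam\in\Lam}\bbbk\,a^{\lam}$ of the free abelian group $\Lam\cong\bbbz^{\nu}$ with $a^{\lam}a^{\tau}=\pm a^{\lam+\tau}$, and every element is a finite $\bbbk$-linear combination of the monomials $a^{\lam}$. Fixing a $\bbbz$-basis $\lam_{1},\ldots,\lam_{\nu}$ of $\Lam$, each $a^{\lam}$ is, up to a sign coming from $\sigma$, a product of the $a^{\pm\lam_{i}}$, so $\bbbk_{\sigma}$ is generated as a ring by the finite set $T:=\bbbk\cup\{a^{\pm\lam_{1}},\ldots,a^{\pm\lam_{\nu}}\}$ (note $1,-1\in T$ because $\bbbk$ is finite). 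Let $\Gamma$ be the subgroup of $\St_{\rd}(\bbbk_{\sigma})$ generated by the finite set $\{x_{\a}(t)\mid \a\in\rd,\ t\in T\}$. Since $\St_{\rd}(\bbbk_{\sigma})$ is by definition generated by all root elements $x_{\a}(a)$, and $\rd$ is an irreducible reduced finite root system of rank $\ell\geq2$ (as $\fg$ has rank $>1$), it suffices to prove $\Gamma=\St_{\rd}(\bbbk_{\sigma})$.

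For each $\a\in\rd$ put $A_{\a}:=\{a\in\bbbk_{\sigma}\mid x_{\a}(a)\in\Gamma\}$. By the additivity relation $x_{\a}(a)x_{\a}(b)=x_{\a}(a+b)$, each $A_{\a}$ is an additive subgroup of $\bbbk_{\sigma}$, and by construction $T\subseteq A_{\a}$ for every $\a$. Because $1\in T$, each element $x_{\a}(1)$ lies in $\Gamma$, hence so does the Weyl element $w_{\a}:=x_{\a}(1)x_{-\a}(-1)x_{\a}(1)$; conjugation by $w_{\a}$ sends $x_{\b}(a)$ to $x_{w_{\a}(\b)}(\pm a)$, so $A_{w_{\a}(\b)}=A_{\b}$. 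As the Weyl group acts transitively on roots of a fixed length, $A_{\a}$ depends only on whether $\a$ is short or long; write these common subgroups as $A_{sh}$ and $A_{lg}$. The goal is reduced to showing $A_{sh}=A_{lg}=\bbbk_{\sigma}$.

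The multiplicative structure is supplied by the Chevalley commutator formula
\[
[x_{\a}(a),x_{\b}(b)]=\prod_{\substack{i,j>0\\ i\a+j\b\in\rd}} x_{i\a+j\b}\big(C_{ij}\,a^{i}b^{j}\big),
\]
valid for $\a+\b\neq0$. Working inside a rank $2$ subsystem of type $A_{2}$, $B_{2}$ or $G_{2}$ with $\a+\b=\gamma$, and taking $a\in T$ and $b\in A_{\b}$, the leading term of this product is $x_{\gamma}(\pm ab)$; after clearing the remaining terms one gets $ab\in A_{\gamma}$, so $T\cdot A_{\b}\subseteq A_{\a+\b}$. Combined with the length‑orbit transitivity from the previous paragraph, this shows that $A_{sh}$ and $A_{lg}$ are additive subgroups that contain $T$ and are stable under multiplication by $T$; since $T$ generates $\bbbk_{\sigma}$ as a ring and $1\in T$, they are all of $\bbbk_{\sigma}$ (every monomial $c\,a^{\lam}$ is reached by induction on word length, and arbitrary elements by additivity). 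Hence $x_{\a}(a)\in\Gamma$ for all $\a$ and $a$, i.e. $\Gamma=\St_{\rd}(\bbbk_{\sigma})$.

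The main obstacle is the ``clearing of the remaining terms'' step in the non-simply-laced cases $B_{\ell},C_{\ell},F_{4},G_{2}$: there the commutator formula carries several terms whose integer structure constants $C_{ij}$ lie in $\{\pm1,\pm2,\pm3\}$, and the coordinates no longer commute. Isolating the single product term $x_{\gamma}(\pm ab)$ from such a relation requires inverting $2$ and $3$, which is precisely why the hypothesis $\operatorname{char}\bbbk\geq5$ is imposed; the signs introduced by the elementary cocycle $\sigma$ affect these manipulations only up to the harmless factor $\pm1$. In the simply-laced case the formula has a single term and the argument is immediate. Careful verification that non-simple roots of each length are available as $\gamma=\a+\b$ in some rank $2$ subsystem, and that the higher terms can always be pushed into already-controlled root subgroups, is the remaining bookkeeping.
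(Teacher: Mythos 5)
Your overall strategy is the same as the paper's: take the root elements $\hat{x}_{\dot\a}(t)$ with $t$ ranging over $\bbbk$ times monomials of exponent in $\{0,\pm1\}$ as the candidate finite generating set, and then use Weyl-element conjugation together with the Chevalley commutator relations in rank~$2$ subsystems to reach all higher monomials. However, the step you label ``clearing the remaining terms'' is not a side issue of bookkeeping --- it is the entire content of the proof in the non-simply-laced cases, and as stated your key claim $T\cdot A_{\b}\subseteq A_{\a+\b}$ does not follow from a single application of the commutator formula. To isolate $x_{\gamma}(\pm ab)$ from $[x_{\a}(a),x_{\b}(b)]$ you must already know that the terms $x_{i\a+j\b}(C_{ij}a^{i}b^{j})$ with $(i,j)\neq(1,1)$ lie in $\Gamma$; but for general $b\in A_{\b}$ these involve monomials of \emph{higher} degree than $ab$ (e.g.\ $a^{2}b$, $a^{3}b$, $a^{3}b^{2}$ in type $G_{2}$), so a naive induction on degree runs in the wrong direction. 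The way out --- which is what the paper's proof actually does --- is to make very specific choices of arguments (e.g.\ pairing $a_{i}$ against $c\,a_{i}^{-1}$ so that all but one factor of the commutator is a monomial already obtained) and to establish the monomials in a particular order, alternating between long-root and short-root positions and using the Weyl subgroup $\dot{W}$ to move data between roots of the same length. Until that scheduling is exhibited, the inductive engine of your argument is not in place. (Your remark about ``inverting $2$ and $3$'' is also slightly off target: one never divides inside the root subgroups; one only needs the integral structure constants to be nonzero modulo $p$, and then absorbs the resulting scalar using multiplication by $\bbbk^{\times}$, which itself requires a separate torus-element argument.)

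A secondary point: your reduction to rank~$2$ subsystems silently assumes $\operatorname{rank}(\dot R)\geq 2$. The lemma as stated (and as used in Theorem \ref{thmgealtf}) does not impose this, and the paper's proof treats $\operatorname{rank}(\dot R)=1$ separately, where no commutator relations are available and one must instead generate higher monomials purely from the relations $\hat{n}_{\dot\a}(u)\hat{x}_{\dot\a}(t)\hat{n}_{\dot\a}(u)^{-1}=\hat{x}_{-\dot\a}(\pm u t u)$ for unit monomials $u$. Your argument as written does not cover that case.
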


\proof
	Let $\nu$ be the nullity of $\g.$\ {One knows from \cite[Chapter II]{AABGP97} that the lattice generated by isotropic roots contains a basis $\delta_{i}\in R^0$ ($1\leq i\leq\nu$)}, {for simplicity of the notation in calculations, let $a_{i}$ denote a multiplicative change of variables corresponding to $\delta_{i}$ in the coordinate ring $\bbbk_{\sigma}$, e.g., we have $a_{i}^{n}a_{j}^{m}=n\delta_{i}+m\d_j$ in this setting.}\ Moreover, since the 2-cocycle is assumed to be elementary, the following calculations are accurate up to a sign and without loss of generality we assume that $\sigma\cong1$.
		
 First assume $\text{rank}(\dot{R})=1.$\ We claim that the finite set 
 \[
 \mathcal{{X}}:=\{\hat{x}_{\dot{\alpha}}(\bbbk \prod_{i=1}^{\nu} a^{n_{i}}_{i})~|{~n_{i}\in\{\pm1,0\},~\dot{\a}\in\dot{R}}\},
 \]
 generates $\St_{\dot{R}}(\bbbk_{\sigma})$.\ First we show that the higher powers $\hat{x}_{\dot{\alpha}}(a^{\pm n}_{i})$, ($n>1,1\leq i\leq\nu $) are generated by $\mathcal{{X}}$.\ Higher powers of negative sign can be generated similarly.
 
 Recall from \cite{AP19} that for every unit $a\in\bbbk_{\sigma}$  
 \begin{equation*}
 \hat{n}_{\dot{\alpha}}(a):=\hat{x}_{\dot{\alpha}}(a)\hat{x}_{-\dot{\alpha}}(-a^{-1})\hat{x}_{\dot{\alpha}}(a).
 \end{equation*}
 Now by \textbf{(StF$\text{2}'$)} in \cite[Definition 4.1]{AP19} we have
	\[
	\hat{n}_{\dot{\alpha}}(a^{-1}_{i})\hat{x}_{\dot{\alpha}}(-1)\hat{n}_{\dot{\alpha}}(a^{-1}_{i})^{-1}=\hat{x}_{-\dot{\alpha}}(a_{i}^{2}),
	\]
	and
	\[
	\hat{n}_{\dot{\alpha}}(a^{-1}_{i})\hat{x}_{\dot{\alpha}}(-a_{i})\hat{n}_{\dot{\alpha}}(a^{-1}_{i})^{-1}=\hat{x}_{-\dot{\alpha}}(a_{i}^{3}).
	\]
	Then 
	\[
	\hat{n}_{-\dot{\alpha}}(1)\hat{x}_{-\dot{\alpha}}(a_{i}^{2})\hat{n}_{-\dot{\alpha}}(1)^{-1}=\hat{x}_{\dot{\alpha}}(-a_{i}^{2}),
	\]
	and
	\[
	\hat{n}_{-\dot{\alpha}}(1)\hat{x}_{-\dot{\alpha}}(a_{i}^{3})\hat{n}_{-\dot{\alpha}}(1)^{-1}=\hat{x}_{\dot{\alpha}}(-a_{i}^{3}).
	\]
	Higher powers can be generated inductively with the above algorithm.
For generating elements $\hat{x}_{\dot{\alpha}}(a_{i}^{n}a_{j}^{m})$ ($n,m\in\mathbb{Z}, \dot{\alpha}\in\rd$) by \textbf{(StF$\text{2}'$)} in \cite[Definition 4.1]{AP19} we have
	\[
\hat{n}_{\dot{\alpha}}(a^{-n}_{i})\hat{x}_{\dot{\alpha}}(-a_{j})\hat{n}_{\dot{\alpha}}(a^{-n}_{i})^{-1}=\hat{x}_{-\dot{\alpha}}(a_{i}^{2n}a_{j}),
\]	
and
 	\[
 \hat{n}_{\dot{\alpha}}(a^{-n}_{i})\hat{x}_{\dot{\alpha}}(-a_{i}a_{j})\hat{n}_{\dot{\alpha}}(a^{-n}_{i})^{-1}=\hat{x}_{-\dot{\alpha}}(a_{i}^{2n+1}a_{j}).
 \]	
 Therefore, elements $\hat{x}_{-\dot{\alpha}}(a_{i}^{n}a_{j})$ ($n\in\mathbb{Z}$) are generated by $\mathcal{{X}}$ and hence by the relations
 \[
 \hat{n}_{-\dot{\alpha}}(1)\hat{x}_{-\dot{\alpha}}(a_{i}^{n}a_{j})\hat{n}_{-\dot{\alpha}}(1)^{-1}=\hat{x}_{\dot{\alpha}}(-a_{i}^{n}a_{j}),
 \]
 elements $\hat{x}_{\dot{\alpha}}(a_{i}^{n}a_{j})$ ($n\in\mathbb{Z}$) are generated by $\mathcal{{X}}$ as well.\ 
 
 Again by \textbf{(StF$\text{2}'$)} in \cite[Definition 4.1]{AP19} we have
 \[
 \hat{n}_{\dot{\alpha}}(a^{-m}_{j})\hat{x}_{\dot{\alpha}}(-a_{i}^{n})\hat{n}_{\dot{\alpha}}(a^{-m}_{j})^{-1}=\hat{x}_{-\dot{\alpha}}(a_{i}^{n}a_{j}^{2m}),
 \]	
 and
 \[
 \hat{n}_{\dot{\alpha}}(a^{-m}_{j})\hat{x}_{\dot{\alpha}}(-a_{i}^{n}a_{j})\hat{n}_{\dot{\alpha}}(a^{-m}_{j})^{-1}=\hat{x}_{-\dot{\alpha}}(a_{i}^{n}a_{j}^{2m+1}).
 \]	
 Therefore, elements $\hat{x}_{-\dot{\alpha}}(a_{i}^{n}a_{j}^{m})$ ($n,m\in\mathbb{Z}$) are generated by $\mathcal{{X}}$ and hence by the relations
 \[
 \hat{n}_{-\dot{\alpha}}(1)\hat{x}_{-\dot{\alpha}}(a_{i}^{n}a_{j}^{m})\hat{n}_{-\dot{\alpha}}(1)^{-1}=\hat{x}_{\dot{\alpha}}(-a_{i}^{n}a_{j}^{m}),
 \]
 elements $\hat{x}_{\dot{\alpha}}(a_{i}^{n}a_{j}^{m})$ ($n\in\mathbb{Z}$) are generated by $\mathcal{{X}}$ as well.\ Now note that the above procedure can be applied to any $n$-tuple ($2\leq n\leq\nu$) of elements $a_{i}$ ($1\leq i\leq\nu$).\ Therefore, all higher powers $\prod_{i=1}^{\nu} a^{n_{i}}_{i}$ ({$n_{i}\in\mathbb{Z}$}) are generated by $\mathcal{{X}}$.
 
	Assume now that $\text{rank}(\dot{R})>1.$\ We claim that the finite set 
	\[
	\mathcal{{X}}:=\{\hat{x}_{\dot{\alpha}}(\bbbk a^{n}_{i})~|~n\in\{\pm1,0\},~1\leq i\leq\nu,~\dot{\a}\in\dot{R}\},
	\]
	generates $\St_{\dot{R}}(\bbbk_{\sigma})$.\ First we show that the higher powers $\hat{x}_{\dot{\alpha}}(a^{\pm n}_{i})$, ($n>1,1\leq i\leq\nu $) are generated by $\mathcal{{X}}$.\ Higher powers with negative sign can be generated similarly.

	By the locality of our arguments, it suffices to only consider rank 2 cases, namely, types $A_{2}$, $B_{2}$ and $G_{2}.$\ Here we only check type $G_{2}$ since the other two types can be checked similarly and they have simpler relations.\ Also, the structure constants appear here which are modulo the characteristic of $\bbbk$.\ {By assuming that the characteristic is at least 5, all the structure constants survive.}\ Let $\dot{\a},\dot{\b}$ be a base for $G_{2}.$\ Then by \textbf{(StF2)} in \cite[Definition 4.1]{AP19} we have
	\[
	(\hat{x}_{\dot{\alpha}}(a_{i}),\hat{x}_{\dot{\beta}}(c^{-1}_{32}a^{-1}_{i}))=\hat{x}_{\dot{\alpha}+\dot{\beta}}(c_{11}c^{-1}_{32})\hat{x}_{2\dot{\alpha}+\dot{\beta}}(c_{21}c^{-1}_{32}a_{i})\hat{x}_{3\dot{\alpha}+2\dot{\beta}}(a_{i}^{2}),
	\]
	and therefore $\hat{x}_{3\dot{\alpha}+2\dot{\beta}}(a_{i}^{2})$ is generated.\ Similarly $\hat{x}_{3\dot{\alpha}+2\dot{\beta}}(a_{i}^{-2})$ can also be generated.\ Note that the subgroup $\dot{W}:=\langle \hat{n}_{\dot{\gamma}}(\bbbk)~|~\dot{\gamma}\in\dot{R}\rangle$ is clearly generated by $\mathcal{{X}}.$\ Now by \textbf{(StF3)} in \cite[Lemma 4.2]{AP19} we can move $a_{i}^{2}$ to the entry $\hat{x}_{\dot{\beta}}$ and by \textbf{(StF2)} in \cite[Definition 4.1]{AP19} we have 
	\[
	(\hat{x}_{3\dot{\alpha}+\dot{\b}}(a_{i}),\hat{x}_{\dot{\beta}}(c_{11}^{-1}a^{2}_{i}))=\hat{x}_{3\dot{\alpha}+2\dot{\beta}}(a_{i}^{3}).
	\]
	With the above procedure and by the help of the action of $\dot{W}$ as in \textbf{(StF3)} in \cite[Lemma 4.2]{AP19} all powers of $a^{n}_{i}$ for all the entries corresponding to the long roots, namely {$\pm\dot{\beta}$, $\pm(3\dot{\alpha}+\dot{\beta})$ and $\pm(3\dot{\alpha}+2\dot{\beta})$}, are generated.
	
	For the entries corresponding to the short roots, again by  \textbf{(StF2)} in \cite[Definition 4.1]{AP19} we have
	\[
	(\hat{x}_{\dot{\alpha}}(a_{i}),\hat{x}_{\dot{\beta}}(c_{21}^{-1}))=\hat{x}_{\dot{\alpha}+\dot{\beta}}(c_{11}c_{21}^{-1}a_{i})\hat{x}_{2\dot{\alpha}+\dot{\beta}}(a_{i}^{2})\hat{x}_{3\dot{\alpha}+2\dot{\beta}}(c_{32}c_{21}^{-1}a_{i}^{3}).
	\]
	We showed above that $\hat{x}_{3\dot{\alpha}+2\dot{\beta}}(c_{32}c_{21}^{-1}a_{i}^{3})$ is generated by $\mathcal{X}${, hence} $\hat{x}_{2\dot{\alpha}+\dot{\beta}}(a_{i}^{2})$ is also generated by $\mathcal{X}$ and by the action of $\dot{W}$ as in \textbf{(StF3)} in \cite[Lemma 4.2]{AP19}, the power 2, namely $a_{i}^{2}$ is generated for all the entries corresponding to the short roots; {$\pm\dot{\a}$, $\pm(\dot{\alpha}+\dot{\beta})$ and $\pm(2\dot{\alpha}+\dot{\beta})$} and, in particular, for the entry $\hat{x}_{\dot{\alpha}}$.\ Again by \textbf{(StF2)} in \cite[Definition 4.1]{AP19} we have
	\[
	(\hat{x}_{\dot{\alpha}}(a^{2}_{i}),\hat{x}_{\dot{\a}+\dot{\beta}}(c_{11}^{-1}a_{i}))=\hat{x}_{2\dot{\alpha}+\dot{\beta}}(a^{3}_{i})\hat{x}_{3\dot{\alpha}+\dot{\beta}}(c_{21}c_{11}^{-1}a_{i}^{5}).
	\]
	{Since} we showed that $\hat{x}_{3\dot{\alpha}+\dot{\beta}}(c_{21}c_{11}^{-1}a_{i}^{5})$ is generated by $\mathcal{X}$, therefore, it follows from the above equation that $\hat{x}_{2\dot{\alpha}+\dot{\beta}}(a^{3}_{i})$ is also generated by $\mathcal{X}.$ Now by repeating this procedure and by the help of the action of $\dot{W}$ as in \textbf{(StF3)} in \cite[Lemma 4.2]{AP19} all powers of $a^{n}_{i}$ for all the entries corresponding to the short roots, namely $\dot{\a}$, $\dot{\alpha}+\dot{\beta}$ and $2\dot{\alpha}+\dot{\beta}$, are also generated.\ 
	
	For generating elements $\hat{x}_{\dot{\alpha}}(a_{i}^{n}a_{j}^{m})$ ($n,m\in\mathbb{Z}, \dot{\alpha}\in\rd$) by  \textbf{(StF2)} in \cite[Definition 4.1]{AP19} we have 
	\[
	(\hat{x}_{3\dot{\alpha}+\dot{\b}}(a_{i}^{n}),\hat{x}_{\dot{\beta}}(c_{11}^{-1}a_{i}^{m}))=\hat{x}_{3\dot{\alpha}+2\dot{\beta}}(a_{i}^{n}a_{j}^{m}).
	\]
	Hence by \textbf{(StF3)} in \cite[Lemma 4.2]{AP19} all products $a^{n}_{i}a_{j}^{m}$ for all the entries corresponding to the long roots, namely $\dot{\beta}$, $3\dot{\alpha}+\dot{\beta}$ and $3\dot{\alpha}+2\dot{\beta}$, are generated by $\mathcal{{X}}$.\ Again by \textbf{(StF2)} in \cite[Definition 4.1]{AP19} we have 
	\[
	(\hat{x}_{\dot{\alpha}}(a^{n}_{i}),\hat{x}_{\dot{\a}+\dot{\beta}}(c_{11}^{-1}a_{j}^{m}))=\hat{x}_{2\dot{\alpha}+\dot{\beta}}(a^{n}_{i}a_{j}^{m})\hat{x}_{3\dot{\alpha}+\dot{\beta}}(c_{21}c_{11}^{-1}a_{i}^{2n}a_{j}^{m}).
	\]
	Now since we showed that $\hat{x}_{3\dot{\alpha}+\dot{\beta}}(c_{21}c_{11}^{-1}a_{i}^{2n}a_{j}^{m})$ are generated by $\mathcal{X}$, therefore, it follows from the above equation that $\hat{x}_{2\dot{\alpha}+\dot{\beta}}(a^{n}_{i}a_{j}^{m})$ are also generated by $\mathcal{X}$ and hence in view of \textbf{(StF3)} in \cite[Lemma 4.2]{AP19} all products $a^{n}_{i}a_{j}^{m}$ for all the entries corresponding to the short roots are generated by $\mathcal{X}$ as well.\ The above procedure is valid for every pair $a_{i},a_{j}$ ($1\leq i,j\leq\nu$) and we can extend it to $\nu$-tuples step by step.
	\qed
	
	\begin{rem}
		Note that  in the above lemma we use \cite[Lemma 4.2]{AP19} which assumes that the characteristic is 0  but this condition is redundant.\ 
	\end{rem}

\begin{thm}\label{thmgealtf}
	Let $\text{Ad}(G)_{\bbbk}$ be the adjoint form of the group of extended affine Lie type associated to an EALA $\fg$ of nullity at least one whose coordinate ring 2-cocycle is elementary.\ Then the following hold:
	\begin{itemize}
		\item [(i)] The group $\text{Ad}(G)_{\bbbk}$ is infinite.
		\item[(ii)] If $\bbbk$ is finite {with characteristic at least 5,} then $\text{Ad}(G)_{\bbbk}$ is finitely generated.
	\end{itemize}
\end{thm}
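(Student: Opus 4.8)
The plan is to prove the two assertions by quite different means: part~(i) through the infinitude of the extended affine Weyl group, and part~(ii) by realizing $\text{Ad}(G)_{\bbbk}$ as a homomorphic image of the Steinberg group $\St_{\dot R}(\bbbk_\sigma)$ and then invoking Lemma~\ref{lemStfin}.

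For (i), I would first introduce, for each $\a\in R^\times$, the Weyl element
$$\hat w_\a:=\text{Ad}^\a_{\bbbk}\begin{pmatrix}0&1\\-1&0\end{pmatrix}\in\text{Ad}(G)_{\bbbk},$$
built from the root-group maps of (\ref{eq:ProSL2modii01}); this is the positive-characteristic analogue of the automorphism $\Phi_\a$ recalled before Lemma~\ref{lemassump0}. Using Corollary~\ref{co1-1} (stability of $\fg_c^{\bbbz}$ under $\Phi_\a$) and reducing modulo $p$, one checks that $\hat w_\a$ carries the homogeneous component $(\fg_c^{\bbbk})_\b$ bijectively onto $(\fg_c^{\bbbk})_{w_\a(\b)}$ for every $\b\in R$. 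Hence the subgroup $\langle\hat w_\a\mid\a\in R^\times\rangle$ permutes the combinatorial weight set $R$, and the assignment sending each group element to the induced permutation is a homomorphism onto $\w_R$, because $\hat w_\a\hat w_\b$ induces $w_\a w_\b$. This action is purely combinatorial and so insensitive to the characteristic of $\bbbk$. Since $\fg$ has nullity at least one, $\w_R$ contains an infinite group of translations and is therefore infinite; consequently $\langle\hat w_\a\rangle$, and a fortiori $\text{Ad}(G)_{\bbbk}$, is infinite.

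For (ii), I would use that the generators $\exp(\text{ad}^{\bbbk}(x_{\dot\a}\otimes f))$ of $\text{Ad}(G)_{\bbbk}$, with $\dot\a\in\dot R$ and $f$ ranging over the coordinate ring $\bbbk_\sigma$, satisfy in the adjoint representation the Steinberg relations $(\textbf{StF2})$, $(\textbf{StF2}')$, $(\textbf{StF3})$ of \cite[Definition 4.1, Lemma 4.2]{AP19}. These hold because the relevant structure constants $c_{\a,\b}$ are integers (Proposition~\ref{hump3}) whose reduction modulo $p$ is faithful once $p\geq 5$. This produces a surjective group homomorphism $\St_{\dot R}(\bbbk_\sigma)\twoheadrightarrow\text{Ad}(G)_{\bbbk}$, $\hat x_{\dot\a}(f)\mapsto\exp(\text{ad}^{\bbbk}(x_{\dot\a}\otimes f))$. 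By Lemma~\ref{lemStfin}, $\St_{\dot R}(\bbbk_\sigma)$ is finitely generated when $\bbbk$ is finite of characteristic at least $5$ and $\sigma$ is elementary; since a homomorphic image of a finitely generated group is finitely generated, $\text{Ad}(G)_{\bbbk}$ is finitely generated, which is (ii).

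The two compatibility checks—that $\hat w_\a$ genuinely permutes the graded pieces of $\fg_c^{\bbbk}$ by $w_\a$ (a mod-$p$ reduction of \cite[Proposition I.1.27]{AABGP97} via Corollary~\ref{co1-1}), and that the Steinberg relations survive reduction modulo $p$—are routine given the integrality of the structure constants. The real content is already concentrated in the established Lemma~\ref{lemStfin}: once the epimorphism $\St_{\dot R}(\bbbk_\sigma)\to\text{Ad}(G)_{\bbbk}$ is in place, finite generation is immediate, while infinitude needs only the observation that the nullity hypothesis forces $\w_R$ to be infinite. Thus the main obstacle is not in the present theorem but in securing Lemma~\ref{lemStfin}, and I would take care only that the char$\geq 5$ hypothesis is invoked exactly where the $G_2$ structure constants must remain nonzero.
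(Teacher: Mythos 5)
Your part~(ii) is essentially the paper's proof: both establish the epimorphism $\St_{\dot R}(\bbbk_{\sigma})\twoheadrightarrow\text{Ad}(G)_{\bbbk}$ sending $\hat x_{\dot\a}(f)$ to $\exp(\text{ad}^{\bbbk}(x_{\dot\a}\otimes f))$ (the paper cites the method of \cite[Proposition 4.12]{AP19} for verifying the Steinberg relations, which you justify instead by the integrality of the structure constants and their faithful reduction for $p\geq 5$), and then both quote Lemma~\ref{lemStfin}. Your part~(i), however, takes a genuinely different route. The paper argues that $\fg$ of positive nullity contains copies of affine Kac--Moody algebras, so $\text{Ad}(G)_{\bbbk}$ contains the corresponding affine Kac--Moody groups, whose infinitude is read off from the loop realization. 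You instead build Weyl elements $\hat w_\a$ from the $\SL$-maps, use Corollary~\ref{co1-1} to see that their mod-$p$ reductions permute the graded pieces of $\fg_c^{\bbbk}$ according to $w_\a$, and thereby surject $\langle\hat w_\a\mid\a\in R^\times\rangle$ onto the extended affine Weyl group $\w_R$, which is infinite once the nullity is positive (e.g.\ $w_\a w_{\a+\sg}$ acts as $\b\mapsto\b-(\b,\a^\vee)\sg$, of infinite order). Your version is more self-contained, since it avoids importing the infinitude of affine Kac--Moody groups over finite fields, at the cost of two checks you correctly flag as routine: that the reduction of $\Phi_\a$ carries $(\fg_c^{\bbbk})_\b$ bijectively onto $(\fg_c^{\bbbk})_{w_\a(\b)}$, and that $\w_R$ acts faithfully on $R$ (true because $R$ spans $\v$). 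Both approaches are sound.
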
 
\begin{proof}
	(i) Since every EALA $\fg$ of nullity at least one contains certain copies of affine Kac-Moody algebras, the group $\text{Ad}(G)_{\bbbk}$ contains copies of the corresponding affine Kac-Moody groups.\ The result then follows from the loop realization of affine Kac-Moody groups.
	
	(ii) Let $\St_{\dot{R}}(\bbbk_{\sigma})$ denote the Steinberg group of type $\rd$ over $\bbbk_{\sigma}$ as in \cite[Definition 4.1]{AP19}.\ By Lemma \ref{lemStfin},  $\St_{\dot{R}}(\bbbk_{\sigma})$ is finitely generated.\ Now by the similar methods used in \cite[Proposition 4.12]{AP19} to define an epimorphism from $\St_{\dot{R}}(\bbbc_{\sigma})$ to $G$, one can show that the assignment 
	\begin{equation}\label{eq:ProisoimageSt01}
		\varpi(\hat{x}_{\dot{\alpha}}(kc_{\delta}))=\text{Ad}(\exp(\text{ad}(c_{\delta}X_{\dot{\alpha}})\otimes k)),
	\end{equation}
	defines an epimorphism from $\St_{\dot{R}}(\bbbk_{\sigma})$ to $\text{Ad}(G)_{\bbbk}$, hence the result. 
	\qed
\end{proof}

This sets a pretext to generalizing the concept of arithmetic groups in non-Archimedean semisimple groups (see \cite{zbMATH05707462}).

% \bib, bibdiv, biblist are defined by the amsrefs package.
\begin{bibdiv}
	\begin{biblist}
		
		\bib{AABGP97}{article}{
			label={AABGP97},
			author={{Allison}, Bruce},
			author={{Azam}, Saeid},
			author={{Berman}, Stephen},
			author={{Gao}, Yun},
			author={{Pianzola}, Arturo},
			title={{Extended affine Lie algebras and their root systems}},
			date={1997},
			ISSN={0065-9266; 1947-6221/e},
			journal={{Mem. Am. Math. Soc.}},
			volume={603},
			pages={122},
		}
		
		\bib{ABFP08}{article}{
			author={{Allison}, Bruce},
			author={{Berman}, Stephen},
			author={{Faulkner}, John},
			author={{Pianzola}, Arturo},
			title={{Realization of graded-simple algebras as loop algebras}},
			date={2008},
			ISSN={0933-7741; 1435-5337/e},
			journal={{Forum Math.}},
			volume={20},
			number={3},
			pages={395\ndash 432},
		}
		
		\bib{ABFP09}{article}{
			author={{Allison}, Bruce},
			author={{Berman}, Stephen},
			author={{Faulkner}, John},
			author={{Pianzola}, Arturo},
			title={{Multiloop realization of extended affine Lie algebras and Lie
					tori}},
			date={2009},
			ISSN={0002-9947; 1088-6850/e},
			journal={{Trans. Am. Math. Soc.}},
			volume={361},
			number={9},
			pages={4807\ndash 4842},
		}
		
		\bib{ABP02}{article}{
			author={{Allison}, Bruce},
			author={{Berman}, Stephen},
			author={{Pianzola}, Arturo},
			title={{Covering algebras. I: Extended affine Lie algebras}},
			date={2002},
			ISSN={0021-8693},
			journal={{J. Algebra}},
			volume={250},
			number={2},
			pages={485\ndash 516},
		}
		
		\bib{AHY13}{article}{
			author={{Azam}, Saeid},
			author={{Hosseini}, S.~Reza},
			author={{Yousofzadeh}, Malihe},
			title={{Extended affinization of invariant affine reflection algebras}},
			date={2013},
			ISSN={0030-6126},
			journal={{Osaka J. Math.}},
			volume={50},
			number={4},
			pages={1039\ndash 1072},
		}
		
		\bib{AK19}{article}{
			author={{Azam}, Saeid},
			author={{Kharaghani}, Zahra},
			title={{Combinatorics of extended affine root systems (type \(A_1\))}},
			date={2019},
			ISSN={0219-4988; 1793-6829/e},
			journal={{J. Algebra Appl.}},
			volume={18},
			number={3},
			pages={22},
			note={Id/No 1950051},
		}
		
		\bib{AP19}{article}{
			author={{Azam}, Saeid},
			author={{Parsa}, Amir~Farahmand},
			title={{Groups of extended affine Lie type}},
			date={2019},
			ISSN={0034-5318; 1663-4926/e},
			journal={{Publ. Res. Inst. Math. Sci.}},
			volume={55},
			number={3},
			pages={627\ndash 649},
		}
		
		\bib{AP19-1}{article}{
			author={{Azam}, Saeid},
			author={{Parsa}, Amir~Farahmand},
			title={{Groups of extended affine Lie type}},
			date={2019},
			journal={\href{https://arxiv.org/abs/1908.07809}{arXiv:1908.07809v1}
				[math.QA]},
		}
		
		\bib{APari21}{article}{
			author={Azam, Saeid},
			author={Parishani, Fatemeh},
			title={Cayley graph characterization of geometric reflections},
			date={2021},
			journal={{J. Lie Theory}},
			volume={31},
			number={2},
			pages={413\ndash 438},
		}
		
		\bib{ASTY19}{article}{
			author={{Azam}, Saeid},
			author={{Soltani}, Mohammad~Bagher},
			author={{Tomie}, Masaya},
			author={{Yoshii}, Yoji},
			title={{A graph-theoretical classification for reflectable bases}},
			date={2019},
			ISSN={0034-5318; 1663-4926/e},
			journal={{Publ. Res. Inst. Math. Sci.}},
			volume={55},
			number={4},
			pages={689\ndash 736},
		}
		
		\bib{AYY12}{article}{
			author={{Azam}, Saeid},
			author={{Yamane}, Hiroyuki},
			author={{Yousofzadeh}, Malihe},
			title={{Reflectable bases for affine reflection systems}},
			date={2012},
			ISSN={0021-8693},
			journal={{J. Algebra}},
			volume={371},
			pages={63\ndash 93},
		}
		
		\bib{Az06}{article}{
			author={{Azam}, Saeid},
			title={{Generalized reductive Lie algebras: connections with extended
					affine Lie algebras and Lie tori}},
			date={2006},
			ISSN={0008-414X; 1496-4279/e},
			journal={{Can. J. Math.}},
			volume={58},
			number={2},
			pages={225\ndash 248},
		}
		
		\bib{Az99}{article}{
			author={{Azam}, Saeid},
			title={{Extended affine Weyl groups}},
			date={1999},
			ISSN={0021-8693},
			journal={{J. Algebra}},
			volume={214},
			number={2},
			pages={571\ndash 624},
		}
		
		\bib{zbMATH01425114}{article}{
			author={{Borcherds}, Richard~E.},
			title={{Modular moonshine. III}},
			date={1998},
			ISSN={0012-7094; 1547-7398/e},
			journal={{Duke Math. J.}},
			volume={93},
			number={1},
			pages={129\ndash 154},
		}
		
		\bib{zbMATH00912132}{article}{
			author={{Borcherds}, Richard~E.},
			author={{Ryba}, Alex J.~E.},
			title={{Modular moonshine. II}},
			date={1996},
			ISSN={0012-7094; 1547-7398/e},
			journal={{Duke Math. J.}},
			volume={83},
			number={2},
			pages={435\ndash 459},
		}
		
		\bib{zbMATH04091712}{misc}{
			author={{Fuks}, D.~B.},
			title={{Cohomology of infinite-dimensional Lie algebras. Transl. from
					the Russian by A. B. Sossinsky}},
			publisher={{Contemporary Soviet Mathematics. New York: Consultants Bureau.
					xii, 339 p.; {\$} 85.00}},
			date={1986},
			ISBN={0-306-10990-5},
		}
		
		\bib{zbMATH05208125}{article}{
			author={{Gille}, Philippe},
			author={{Pianzola}, Arturo},
			title={{Galois cohomology and forms of algebras over Laurent polynomial
					rings}},
			date={2007},
			ISSN={0025-5831; 1432-1807/e},
			journal={{Math. Ann.}},
			volume={338},
			number={2},
			pages={497\ndash 543},
		}
		
		\bib{GZ06}{article}{
			author={{Gao}, Yun},
			author={{Zeng}, Ziting},
			title={{Hermitian representations of the extended affine Lie algebra
					\(\widehat {\mathfrak {gl}_2(\mathbb {C}_q)}\)}},
			date={2006},
			ISSN={0001-8708},
			journal={{Adv. Math.}},
			volume={207},
			number={1},
			pages={244\ndash 265},
		}
		
		\bib{HoTor90}{article}{
			author={{H{\o}egh-Krohn}, Raphael},
			author={{Torresani}, Bruno},
			title={{Classification and construction of quasisimple Lie algebras}},
			date={1990},
			ISSN={0022-1236},
			journal={{J. Funct. Anal.}},
			volume={89},
			number={1},
			pages={106\ndash 136},
		}
		
		\bib{Hum80}{book}{
			author={{Humphreys}, J.~E.},
			title={{Introduction to Lie algebras and representation theory}},
			publisher={Springer, New York, NY},
			date={1972},
			volume={9},
		}
		
		\bib{Kac2}{misc}{
			author={{Kac}, Victor~G.},
			title={{Constructing groups associated to infinite-dimensional Lie
					algebras}},
			publisher={{Infinite dimensional groups with applications, Publ., Math. Sci.
					Res. Inst. 4, 167-216}},
			date={1985},
		}
		
		\bib{Kac85}{book}{
			editor={{Kac}, Victor~G.},
			title={{Infinite dimensional groups with applications. (Dedicated to the
					memory of Claude Chevalley)}},
			publisher={Cambridge University Press, Cambridge; Mathematical Sciences
				Research Institute, Berkeley, CA},
			date={1985},
			volume={4},
		}
		
		\bib{Kac90}{book}{
			author={{Kac}, Victor~G.},
			title={{Infinite dimensional Lie algebras}},
			edition={3rd ed.},
			publisher={Cambridge etc.: Cambridge University Press},
			date={1990},
			ISBN={0-521-37215-1/hbk},
		}
		
		\bib{Kry95}{thesis}{
			author={Kryliouk, IS},
			title={On the structure of quasi-simple lie algebras and their
				automorphism groups.},
			type={Ph.D. Thesis},
			date={1995},
		}
		
		\bib{Mac72}{article}{
			author={{Macdonald}, I.~G.},
			title={{Affine root systems and Dedekind's \(\eta\)-function}},
			date={1972},
			ISSN={0020-9910; 1432-1297/e},
			journal={{Invent. Math.}},
			volume={15},
			pages={91\ndash 143},
		}
		
		\bib{Ma18}{book}{
			author={{Marquis}, Timoth\'ee},
			title={{An introduction to Kac-Moody groups over fields}},
			publisher={Z\"urich: European Mathematical Society (EMS)},
			date={2018},
			ISBN={978-3-03719-187-3/hbk; 978-3-03719-687-8/ebook},
		}
		
		\bib{zbMATH06436297}{article}{
			author={{McRae}, Robert},
			title={{Integral forms for tensor powers of the Virasoro vertex operator
					algebra \(L(\frac{1}{2}, 0)\) and their modules}},
			date={2015},
			ISSN={0021-8693},
			journal={{J. Algebra}},
			volume={431},
			pages={1\ndash 23},
		}
		
		\bib{zbMATH06377286}{article}{
			author={{McRae}, Robert},
			title={{On integral forms for vertex algebras associated with affine Lie
					algebras and lattices}},
			date={2015},
			ISSN={0022-4049},
			journal={{J. Pure Appl. Algebra}},
			volume={219},
			number={4},
			pages={1236\ndash 1257},
		}
		
		\bib{Mit85}{book}{
			author={{Mitzman}, David},
			title={{Integral bases for affine Lie algebras and their universal
					enveloping algebras}},
			publisher={Contemporary Mathematics, Vol. 40, American Mathematical Society
				(AMS), Providence, RI},
			date={1985},
		}
		
		\bib{MP95}{book}{
			author={{Moody}, Robert~V.},
			author={{Pianzola}, Arturo},
			title={{Lie algebras with triangular decompositions}},
			publisher={New York, NY: John Wiley \& Sons},
			date={1995},
			ISBN={0-471-63304-6/hbk},
		}
		
		\bib{MS06}{article}{
			author={{Morita}, Jun},
			author={{Sakaguchi}, Hideyuki},
			title={{Groups defined by extended affine Lie algebras with nullity 2}},
			date={2006},
			ISSN={0387-3870},
			journal={{Tokyo J. Math.}},
			volume={29},
			number={2},
			pages={347\ndash 383},
		}
		
		\bib{Ne04}{article}{
			author={{Neher}, Erhard},
			title={{Extended affine Lie algebras}},
			date={2004},
			ISSN={0706-1994},
			journal={{C. R. Math. Acad. Sci., Soc. R. Can.}},
			volume={26},
			number={3},
			pages={90\ndash 96},
		}
		
		\bib{Ne04-1}{article}{
			author={{Neher}, Erhard},
			title={{Lie tori}},
			date={2004},
			ISSN={0706-1994},
			journal={{C. R. Math. Acad. Sci., Soc. R. Can.}},
			volume={26},
			number={3},
			pages={84\ndash 89},
		}
		
		\bib{Ne11}{incollection}{
			author={{Neher}, Erhard},
			title={{Extended affine Lie algebras and other generalizations of affine
					Lie algebras -- a survey}},
			date={2011},
			booktitle={{Developments and trends in infinite-dimensional Lie theory}},
			publisher={Basel: Birkh\"auser},
			pages={53\ndash 126},
		}
		
		\bib{zbMATH05707462}{incollection}{
			author={{R\'emy}, Bertrand},
			title={{Kac-Moody groups as discrete groups}},
			date={2009},
			booktitle={{Essays in geometric group theory. Proceedings of the
					``Instructional Workshop and International Conference on Geometric Group
					Theory'' held at Indian Institute of Technology, Guwahati, India, December
					2--21, 2002}},
			publisher={Mysore: Ramanujan Mathematical Society},
			pages={105\ndash 124},
		}
		
		\bib{zbMATH00841167}{incollection}{
			author={{Ryba}, A. J.~E.},
			title={{Modular Moonshine?}},
			date={1996},
			booktitle={{Moonshine, the monster, and related topics. Joint summer
					research conference on moonshine, the monster, and related topics, June
					18-23, 1994, Mount Holyoke College, South Hadley, MA, USA}},
			publisher={Providence, RI: American Mathematical Society},
			pages={307\ndash 336},
		}
		
		\bib{Sai85}{article}{
			author={{Saito}, Kyoji},
			title={{Extended affine root systems. I: Coxeter transformations}},
			date={1985},
			ISSN={0034-5318; 1663-4926/e},
			journal={{Publ. Res. Inst. Math. Sci.}},
			volume={21},
			pages={75\ndash 179},
		}
		
		\bib{Van01}{article}{
			author={{Van De Leur}, Johan},
			title={Twisted toroidal lie algebras},
			date={2001},
			journal={\href{https://arxiv.org/abs/math/0106119v1}{arXiv:math\slash
					0106119v1} [math.RT]},
		}
		
		\bib{Y06}{article}{
			author={{Yoshii}, Yoji},
			title={{Lie tori -- a simple characterization of extended affine Lie
					algebras}},
			date={2006},
			ISSN={0034-5318; 1663-4926/e},
			journal={{Publ. Res. Inst. Math. Sci.}},
			volume={42},
			number={3},
			pages={739\ndash 762},
		}
		
		\bib{Z11}{article}{
			author={{Zeng}, Ziting},
			title={{A class of irreducible modules for the extended affine Lie
					algebra \(\widetilde{\mathfrak{gl}}_l(\mathbb{C}_q)\)}},
			date={2011},
			ISSN={1674-7283; 1869-1862/e},
			journal={{Sci. China, Math.}},
			volume={54},
			number={6},
			pages={1089\ndash 1099},
		}
		
	\end{biblist}
\end{bibdiv}

%\bibliographystyle{siam}
%\bibliography{ref}
\end{document}